\title{The $Z$-invariant massive Laplacian on isoradial graphs}
\author{C\'edric Boutillier\thanks{{\small
Laboratoire de Probabilit\'es et Mod\`eles Al\'eatoires, Universit\'e Pierre et Marie Curie, 4 place Jussieu, 
F-75005 Paris.} {\small Email: \texttt{cedric.boutillier@upmc.fr}}
}\and B\'eatrice de Tili\`ere\thanks{{\small
Laboratoire d'Analyse et de Math\'ematiques Appliqu\'ees, Universit\'e Paris-Est Cr\'eteil, 61, avenue du G\'en\'eral de Gaulle
F-94010 Cr\'eteil.}
{\small Email: \texttt{beatrice.taupinart-de-tiliere@upec.fr}}
} \and Kilian Raschel\thanks{{\small
CNRS, Laboratoire de Math\'ematiques et Physique Th\'eorique, Universit\'e de Tours, Parc de Grandmont, F-37200 Tours.}
{\small Email: \texttt{kilian.raschel@lmpt.univ-tours.fr}}
}}
\date{\today}
\begin{document}

\maketitle

\begin{abstract}
We introduce a one-parameter family of massive Laplacian operators $(\Delta^{m(k)})_{k\in[0,1)}$ defined on isoradial graphs,
involving elliptic functions. We prove an explicit formula for the inverse of $\Delta^{m(k)}$, the massive Green function, 
which has the remarkable property
of only depending on the local geometry of the graph, and compute its asymptotics. We study the corresponding statistical mechanics model of 
random rooted spanning forests. We prove an explicit local formula for an infinite volume Boltzmann measure, and for the free energy of the model. We show that the model
undergoes a second order phase transition at $k=0$, thus proving that spanning trees corresponding to the Laplacian introduced by Kenyon
\cite{Kenyon3}
are critical. We prove that the massive Laplacian operators $(\Delta^{m(k)})_{k\in(0,1)}$ provide a one-parameter family of $Z$-invariant 
rooted spanning forest models. When the isoradial graph is moreover $\ZZ^2$-periodic, we consider the spectral curve 
 of the characteristic polynomial
 of the massive Laplacian. We provide an explicit parametrization of the curve and prove that it is Harnack and has genus $1$. 
 We further show that every Harnack curve of genus~$1$ with $(z,w)\leftrightarrow(z^{-1},w^{-1})$ symmetry arises from such a massive Laplacian.
\end{abstract}

\section{Introduction}

An \emph{isoradial graph} $\Gs=(\Vs,\Es)$ is a planar embedded graph such that all 
faces are inscribable in a circle of radius $1$. In this paper we introduce a one-parameter family of \emph{massive Laplacian operators} 
$(\Delta^{m(k)})_{k\in[0,1)}$ defined on infinite isoradial graphs, and study its remarkable properties. 
The massive Laplacian operator $\Delta^{m(k)}:\CC^\Vs\rightarrow\CC^\Vs$ involves elliptic functions, it is defined by
\begin{equation}\label{def:massiveLap_intro}
(\Delta^{m(k)}f)(x)=\sum_{y \sim x}\rho(\theta_{xy}\vert
k)[f(x)-f(y)]+m^2(x\vert k)f(x),
\end{equation}
where $k\in[0,1)$ is the elliptic modulus, $\theta_{xy}=\overline{\theta}_{xy} \frac{2K}{\pi}$, the constant $K=K(k)$ is the complete elliptic integral of the first kind,
and $\overline{\theta}_{xy}$ is an angle naturally assigned to the edge $xy$ in the isoradial 
embedding of $\Gs$. The conductance $\rho(\theta_{xy}\vert k)$ and the 
mass $m^2(x|k)$ are given by 
\begin{align}
&\rho(\theta_{xy}\vert k)=\sc(\theta_{xy}\vert k),\label{equ:cond_intro}\\
&m^2(x\vert k)=\sum_{y \sim x} [\Arm(\theta_{xy}\vert k)-\rho(\theta_{xy}\vert k)]
\label{equ:mass_intro},
\end{align}
where $\sc$ is one of Jacobi's twelve elliptic functions and the function $\Arm$, related to integrals of squared Jacobi elliptic 
functions, is defined in Equation \eqref{def:Abis}. More details are to be found in Section~\ref{sec:EllipticFunctions}. 

The first of the main results is an explicit formula for the inverse operator, 
namely for the \emph{massive Green function}~$G^{m(k)}$, see also Theorem~\ref{thm:expression_Green} for a detailed version.
\begin{thm}\label{thm:expression_Green_intro}
Let $\Gs$ be an infinite isoradial graph, and let $k\in(0,1)$.
Then, for every pair of vertices $x,y$ of $\Gs$, the massive Green function $G^{m(k)}(x,y)$ has the following explicit expression:
  \begin{equation}
  \label{eq:green_intro}
    G^{m(k)}(x,y) =\frac{k'}{4i\pi} \int_{\Cs_{x,y}} \expo_{(x,y)}(u\vert k) \ud u,
  \end{equation}
  where $k'=\sqrt{1-k^2}$, $\expo_{(x,y)}(\cdot\vert k)$ is the discrete massive exponential function, 
  $\Cs_{x,y}$ is a vertical contour on the torus $\TT(k)= \CC /(4K\ZZ+4iK'\ZZ)$ whose direction is given by the angle of the ray 
  $\RR \overrightarrow{xy}$.
\end{thm}  
Before describing the context of Theorem~\ref{thm:expression_Green_intro}, let us give its main features.
\begin{itemize}
\item The discrete exponential function $\expo_{(x,y)}(\cdot\vert k)$ is defined in Section~\ref{sec:expofunction} using a path of the 
embedded graph from $x$ to $y$. 
This implies that the expression \eqref{eq:green_intro} for 
$G^{m(k)}(x,y)$ is \emph{local}, meaning that it remains unchanged if the isoradial graph $\Gs$
is modified away from a path from $x$ to $y$.
This is a remarkable fact since, when computing the inverse of a discrete operator, 
one expects the geometry of the whole graph to be involved. 
 \item There is no periodicity assumption on the graph $\Gs$.
  \item The discrete massive exponential function is explicit and has a product structure;
 it has identified poles, so that computations can be performed using the residue theorem, see Appendix~\ref{app:comput_green} 
 for some examples. 
  \item The explicit expression \eqref{eq:green_intro} is suitable for asymptotic analysis. Using a saddle-point 
  analysis, we prove explicit exponential decay of the Green function, see Theorem~\ref{thm:asymp_Green}. 
\end{itemize}

\paragraph{Context.}
Local formulas for inverse operators have first been proved in \cite{Kenyon3}. Kenyon considers
two operators on isoradial graphs: the Laplacian with conductances $(\tan(\overline{\theta}_e))_{e\in\Es}$ and 
the Kasteleyn operator arising from the bipartite dimer model with edge-weights $(2\sin(\overline{\theta}_e))$. In the same vein, the first two authors
of this paper proved a local formula for the inverse Kasteleyn operator of a non-bipartite dimer model corresponding to the critical
Ising model defined on isoradial graphs~\cite{BoutillierdeTiliere:iso_perio}. 

The two papers \cite{Kenyon3,BoutillierdeTiliere:iso_perio} have the common feature of considering \emph{critical} models:
the polynomial decay of the inverse Kasteleyn operator proves that the bipartite dimer model is indeed critical; Ising weights of 
\cite{BoutillierdeTiliere:iso_perio} have recently
been proved to be critical \cite{Li:critical,CimasoniDuminil,Lis}; Laplacian conductances are called critical (although it not so clear from 
\cite{Kenyon3} why they should be). This led to the belief that existence of a local formula
for an inverse operator is related to the geometric property of the embedded isoradial graph \emph{and} criticality of the 
underlying model. In this paper, we go further and prove a local formula for a one-parameter family of \emph{non-critical} models.
Indeed, underlying the massive Laplacian is the model of rooted spanning forests, which is \emph{not} critical, as explained in Section~\ref{sec:statmech}.

The idea of the proof of the local formula for the inverse of the Laplacian
operator $\Delta$ given in \cite{Kenyon3}
is the following: find a
one-parameter family of local complex-valued functions in the kernel of
$\Delta$, define its inverse $G$ as a contour integral of these
functions against a singular function, and choose the contour of integration in such a way that $\Delta
G=\Id$. 
The
problem is that this proof neither provides a way of choosing the weights of the
operator, nor a criterion for existence of a one-parameter family of local
functions, nor a way to find them, if they exist. This is why one of
the main contributions of this paper is to actually define a one-parameter
family of weights for the massive Laplacian, and to find local functions in its
kernel, which allow to prove a local formula for its inverse.

Note that when the parameter $k$ is equal to $0$, the mass \eqref{equ:mass_intro} is $0$, the elliptic function
$\sc(\theta)$ becomes $\tan(\theta)$, and we recover the Laplacian considered in~\cite{Kenyon3}. In this case, 
the discrete massive exponential function becomes the exponential function
introduced in~\cite{Mercat:exp} and used in 
the local formula for the Green function of~\cite{Kenyon3}.

\paragraph{Random rooted spanning forests.}

The massive Laplacian operator is naturally related to the statistical
   mechanics model of \emph{rooted spanning forests}. Indeed, 
when the graph $\Gs$ is finite, by Kirchhoff's matrix-tree theorem, the determinant of $\Delta^{m(k)}$ is the partition function 
$\Zforest^k(\Gs)$, \emph{i.e.}, the weighted sum of rooted
spanning forests of $\Gs$, whose weights depend on the
conductances~\eqref{equ:cond_intro} and masses~\eqref{equ:mass_intro}. 
In Section~\ref{sec:statmech}, we prove the following results.

$\bullet$ Theorem~\ref{thm:infinite_vol_meas} proves an explicit expression for an infinite volume 
 rooted spanning forest Boltzmann measure of the graph $\Gs$, involving the massive Laplacian matrix and the massive Green function
 of Theorem~\ref{thm:expression_Green_intro}.
 The proof follows the approach of~\cite{BurtonPemantle}. This measure inherits the \emph{locality} property of 
 Theorem~\ref{thm:expression_Green_intro}, \emph{i.e.}, the probability that a finite subset of edges/vertices belongs to a rooted spanning 
 forest is unchanged if the graph is modified away from this subset.
 
$\bullet$ Assume that the infinite isoradial graph $\Gs$ is $\ZZ^2$-periodic, and consider the natural exhaustion 
$(\Gs_n=\Gs/n\ZZ^2)_{n\geq 1}$ of $\Gs$
 by toroidal graphs. The \emph{free energy} $F_{\mathrm{forest}}^k$ of the spanning forest model is minus the exponential 
 growth rate of the partition function $\Zforest^k(\Gs_n)$, as $n$ tends to infinity. We prove an explicit formula for $F_{\mathrm{forest}}^k$, see
 also Theorem~\ref{thm:free_energy}. It has the property of not involving the combinatorics of the graph. Indeed it is a sum over edges of
 the graph $\Gs_1$ of quantities only depending on the angle $\theta_e$ assigned to the edge $e$ in the isoradial embedding.

\begin{thm}
\label{thm:free_energy_intro}
For every $k\in(0,1)$, the free energy $F^k_{\mathrm{forest}}$ of the rooted spanning forest model on the infinite, $\ZZ^2$-periodic, isoradial graph $\Gs$,
is equal to:
\begin{equation}\label{equ:free_energy_intro}
  F^k_{\mathrm{forest}}=
  \vert \Vs_1 \vert \int_0^K 4H'(2\theta)\log\sc(\theta)\ud\theta+
  \sum_{e\in\Es_1}
  \int_{0}^{\theta_e}\frac{2H(2\theta)\sc'(\theta)}{\sc(\theta)}\ud\theta,
\end{equation}
where $H$ is the function defined in Equation~\eqref{def:H}. 
\end{thm}

$\bullet$ When $k=0$, $F^0_{\mathrm{forest}}$ is equal to the normalized determinant of the Laplacian 
operator of~\cite{Kenyon3}; it is also the free energy of the corresponding spanning tree model. Performing 
an asymptotic expansion around $k=0$ of~\eqref{equ:free_energy_intro},
we prove in Theorem~\ref{thm:free_energy_k0} that the rooted spanning forest model has a \emph{second order phase transition at $k=0$}. 
In particular, this gives a proof that the spanning tree model corresponding to the Laplacian considered
in \cite{Kenyon3} is indeed \emph{critical}.  
Note that the non-analyticity of the free energy at $k=0$ does not come from that of the weights or masses. 
Indeed, the latter are analytic around the origin, see Lemma \ref{lem:analytic_weight}.

$\bullet$ 
Recall that the infinite volume rooted spanning forest Boltzmann
measure inherits the locality property of Theorem~\ref{thm:expression_Green_intro}. 
From the point of view of statistical mechanics, this specific feature is expected from models defined on isoradial graphs that are \emph{$Z$-invariant}. 
Although already present in the work of Kenelly~\cite{Kennelly} and
Onsager~\cite{Onsager},
the notion of $Z$-invariance has been extensively developed by Baxter, see~\cite{Baxter:8V,Baxter:Zinv} and also \cite{Perk:McCoy,Perk:YB,Kenyon6}.
$Z$-invariance imposes a strong locality constraint on the model: invariance of the partition function under star-triangle moves,
see Figure~\ref{fig:figStarTriangle1} and Section~\ref{sec:deftriangleetoile} for definition, or equivalently invariance of the probability measure 
under these moves. This suggests a locality property of the measure, but it does not provide a way of finding explicit local formulas.
Using 3-dimensional consistency of the massive Laplacian operator (Proposition~\ref{prop:3Dconsistency}), we prove the following, 
see also Theorem~\ref{thm:Z_invariant_Lap}.

\begin{thm}
For every $k\in[0,1)$, the statistical mechanics model of rooted spanning forests on isoradial graphs,
with conductances \eqref{equ:cond_intro} and masses \eqref{equ:mass_intro},
is $Z$-invariant.
\end{thm}

\paragraph{The case of periodic isoradial graphs, Harnack curves of genus $1$.}

Suppose further that the isoradial graph $\Gs$ is $\ZZ^2$-periodic. The \emph{massive Laplacian characteristic polynomial}, denoted 
$P_{\Delta^{m(k)}}(z,w)$, is the determinant of the matrix $\Delta^{m(k)}(z,w)$, which is the matrix of the massive 
Laplacian $\Delta^{m(k)}$
restricted to the graph $\Gs_1$ with extra weights $z^{\pm 1},w^{\pm 1}$ along non trivial cycles of the torus. Of particular
interest is the zero locus of this polynomial, otherwise known as the \emph{spectral curve of the massive Laplacian}:
$\Ccal^k=\{(z,w)\in\CC^2:P_{\Delta^{m(k)}}(z,w)=0\}$. We provide an explicit parametrization of this curve, and combining
Proposition~\ref{prop:geom_torus} and Theorem~\ref{thm:harnack}, we prove that this curve has remarkable properties.
\begin{thm}
For every $k\in(0,1)$, the spectral curve $\Ccal^k$ of the massive Laplacian $\Delta^{m(k)}$ is a Harnack curve of genus $1$.
\end{thm}
This is reminiscent of the rational parametrization of critical dimer spectral curves
on periodic, bipartite, isoradial graphs of~\cite{KO2}, corresponding to the genus 0 case.
We further prove the following result, see also Theorem~\ref{thm:Harnack2}.
\begin{thm}\label{thmHarnack2_intro}
Every Harnack curve with $ (z,w)\leftrightarrow(z^{-1},w^{-1})$ symmetry arises as the 
spectral curve of the characteristic polynomial of the massive Laplacian $\Delta^{m(k)}$ on some periodic isoradial graph, for some
$k\in(0,1)$. 
\end{thm}
This can be compared to the fact proved in~\cite{KO2} that any genus $0$ Harnack curve, whose amoeba contains the origin,
is the spectral curve of a critical
dimer model on a bipartite isoradial graph.

Since the spectral curve $\Ccal^k$ has genus $1$, the amoeba's complement has a single bounded component. 
In Proposition~\ref{prop:area_hole}, we prove that the area of the bounded component grows continuously from $0$ to $\infty$ as
$k$ grows from $0$ to $1$.

Using the Fourier approach, the massive Green function can be expressed using the characteristic polynomial.
This approach also works for other choices of weights, and one cannot see from the formula that the locality property is satisfied.
In Section~\ref{sec:recovLocal}, we relate the Fourier approach and Theorem~\ref{thm:expression_Green_intro} by proving
that our choice of weights allow for an astonishing change of variable. Note that this relation was not understood in
the papers~\cite{Kenyon3,BoutillierdeTiliere:iso_gen,deTiliere:quadri}.

\paragraph{Outline of the paper.}
\begin{itemize}
 \item \textbf{Section~\ref{sec:Generalities}: Generalities}. Review of main notions underlying the paper: isoradial graphs and elliptic functions.
 \item \textbf{Section~\ref{sec:massive_Lap}: Massive Laplacian on isoradial graphs}. Introduction of the one-parameter family of massive Laplacian 
 operators $(\Delta^{m(k)})$,
 depending on the elliptic modulus $k\in[0,1)$. Proof of $3$-dimensional consistency. Definition of the 
 discrete massive exponential function. Proof that it defines a family of massive harmonic functions.
 \item \textbf{Section~\ref{sec:massiveGreen}: Massive Green function on isoradial graphs}. Theorem~\ref{thm:expression_Green} proves the local formula for the massive Green function $G^{m(k)}$,
 and Theorem~\ref{thm:asymp_Green} proves asymptotic exponential decay.
 \item \textbf{Section~\ref{sec:periodic_case}: The case of $\ZZ^2$-periodic isoradial graphs}. Definition of the characteristic
 polynomial of the massive Laplacian operators, of the Newton polygon of the characteristic polynomial. 
 Proof of confinement results for the Newton polygon. Definition of the spectral curve $\Ccal^k$ and its amoeba. Explicit parametrization
 of the spectral curve and proof that it has geometric genus $1$. Theorem~\ref{thm:harnack} shows that the spectral curve $\Ccal^k$ 
 is Harnack and Theorem~\ref{thm:Harnack2} proves that every genus $1$, Harnack curve with $(z,w)\leftrightarrow(z^{-1},w^{-1})$ symmetry
 arises from such a massive Laplacian.
 Consequences of the Harnack property on the amoeba of the spectral curve. Proof of the growth of the area of the bounded component of the 
 amoeba's complement. Derivation of the local formula of Theorem~\ref{thm:expression_Green} from the Fourier approach. 
 Asymptotics of the Green function using the approach of~\cite{PeWi13}. 
 \item \textbf{Section~\ref{sec:statmech}: Random rooted spanning forests on isoradial graphs}. Definition of the statistical mechanics model of rooted spanning forests. 
 Theorem~\ref{thm:infinite_vol_meas} proves an explicit, \emph{local}
 expression for an infinite volume Boltzmann measure involving the Green function of Theorem~\ref{thm:expression_Green}. 
 Theorem~\ref{thm:free_energy} proves an explicit, \emph{local} expression for the free energy of the model, and 
 Theorem~\ref{thm:free_energy_k0} shows a second order phase transition at $k=0$ in the rooted spanning forest model. At $k=0$, one 
 recovers the Laplacian considered in~\cite{Kenyon3}. We thus provide a proof that the corresponding spanning tree model is critical. 
 Theorem~\ref{thm:Z_invariant_Lap} proves that our one-parameter family of massive Laplacian defines a one-parameter family of 
 $Z$-invariant spanning forest models.
\item \textbf{Sections~\ref{app:elliptic},~\ref{app:comput_green},~\ref{app:stt} and~\ref{app:stat_mec}}. Appendices for elliptic functions, explicit
computations of the massive Green function, $Z$-invariance, rooted spanning forests and random walks.
\end{itemize}

\medskip

\emph{Acknowledgments:} We warmly thank Erwan Brugall\'e for very helpful discussions on Harnack curves.  
We acknowledge support from the Agence Nationale de la Recherche (projet  MAC2: ANR-10-BLAN-0123) and from the 
R\'egion Centre-Val de Loire (projet MADACA). We thank the referee for her/his useful comments
and suggestions which led us to improve the presentation of this paper. We also thank her/him for the arguments allowing to greatly simplify the proof of 
Proposition~\ref{prop:neg_mass} and of Theorem~\ref{thm:Z_invariant_Lap}.

\section{Generalities}
\label{sec:Generalities}

In this section we review two of the main notions underlying this work: isoradial graphs and elliptic functions.

\subsection{Isoradial graphs}\label{sec:isoradial}

Isoradial graphs, whose name comes from the paper~\cite{Kenyon3}, see also~\cite{Duffin,Mercat:ising} are defined as follows. 
A planar graph $\Gs=(\Vs,\Es)$ is
\emph{isoradial}, if it can be embedded in the plane in such a way that all internal faces are
inscribable in a circle, with all circles having the same radius, and such that all circumcenters are
in the interior of the faces, see Figure~\ref{fig:Iso1} (top left).
From now on, when we speak of an isoradial graph $\Gs$, we mean an isoradial graph \emph{together} with an isoradial embedding
also denoted by $\Gs$. Given an infinite isoradial graph $\Gs$, an isoradial embedding of the dual graph $\Gs^*$ is obtained by taking as dual vertices
the circumcenters of the corresponding faces, see Figure~\ref{fig:Iso1} (bottom
left).

\begin{figure}[ht!]
  \begin{center}
    \begin{tabular}{cc}
      \includegraphics[angle=90,width=70mm]{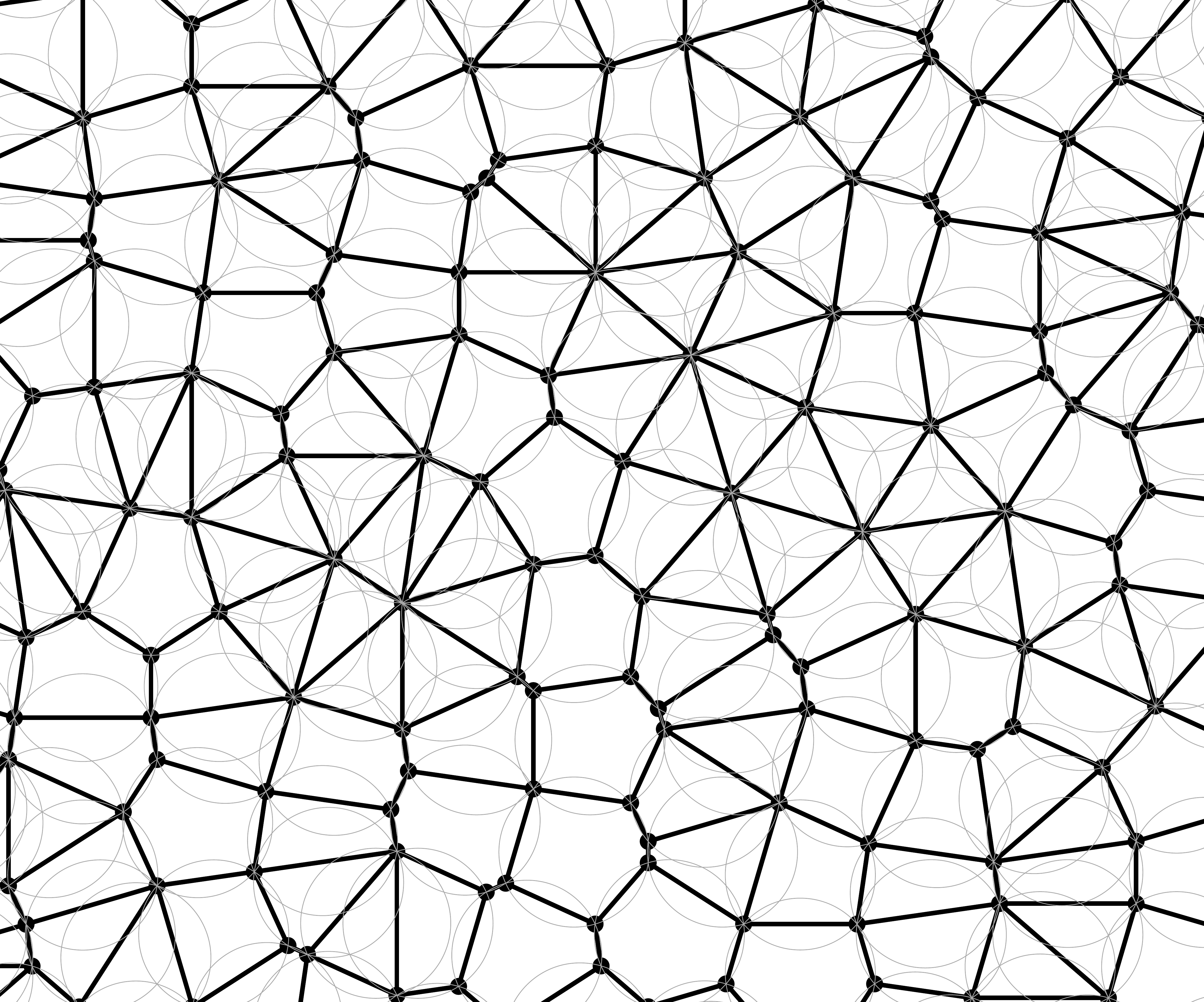} &
      \includegraphics[angle=90,width=70mm]{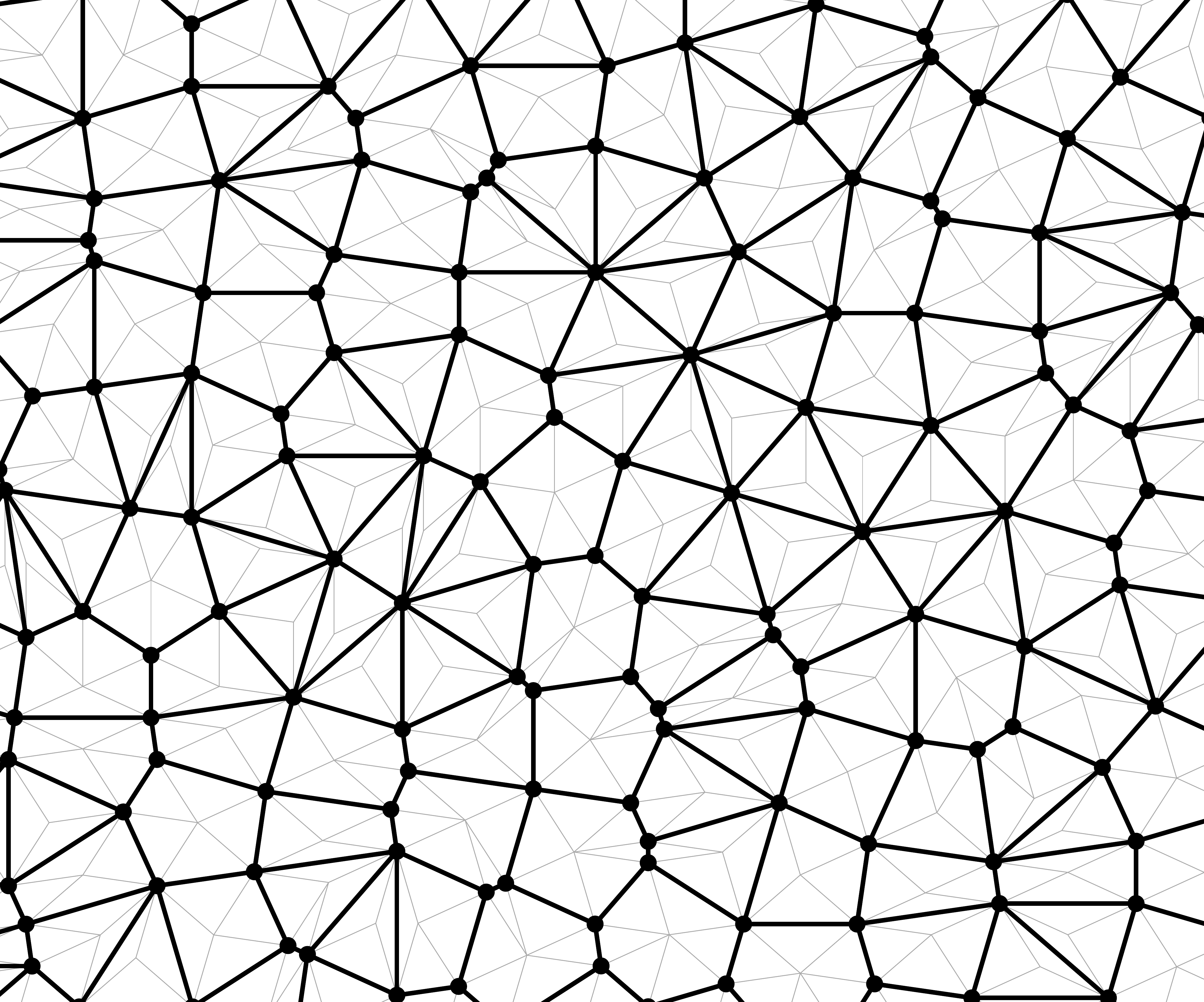} \\
      \includegraphics[angle=90,width=70mm]{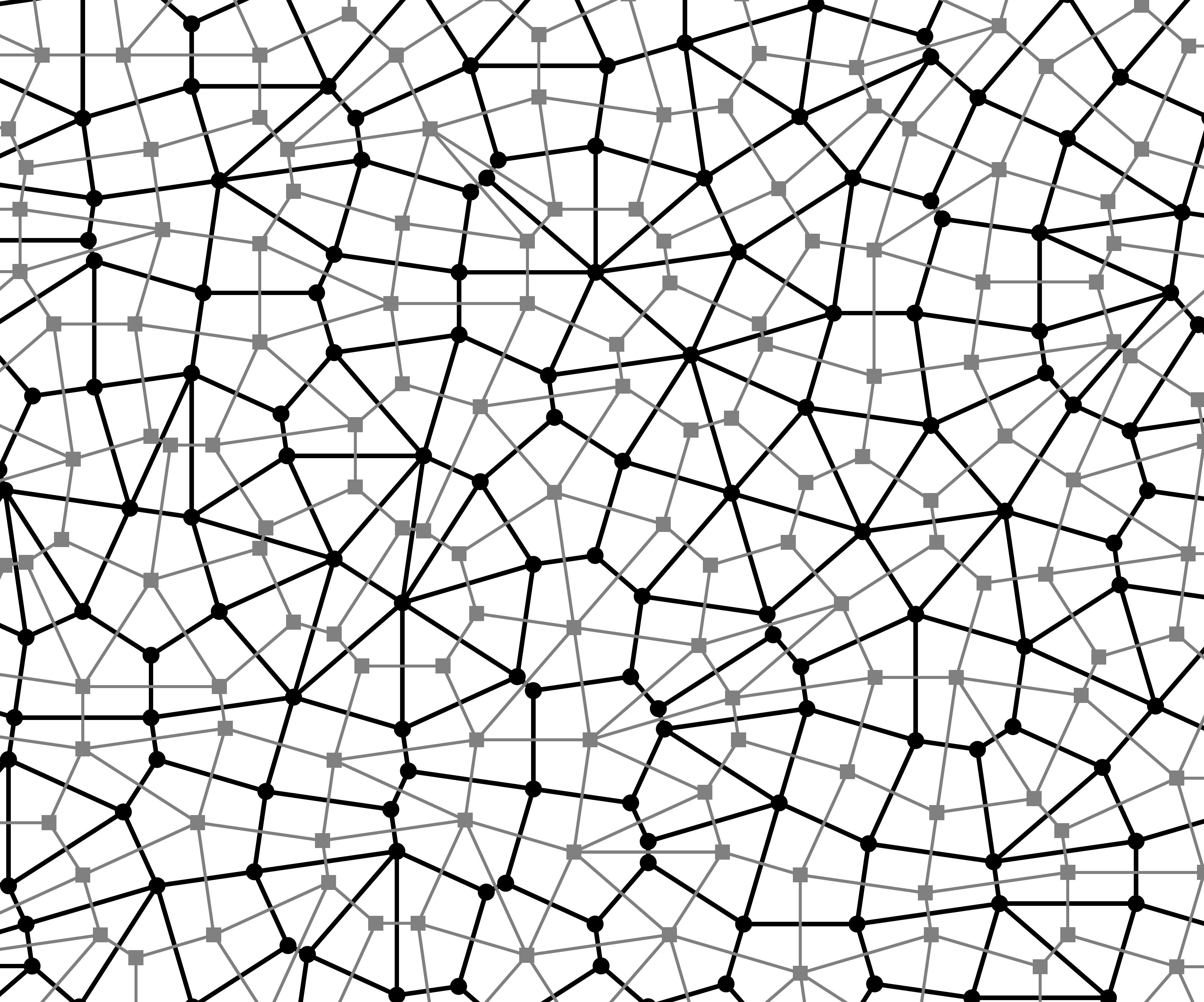} &
      \includegraphics[angle=90,width=70mm]{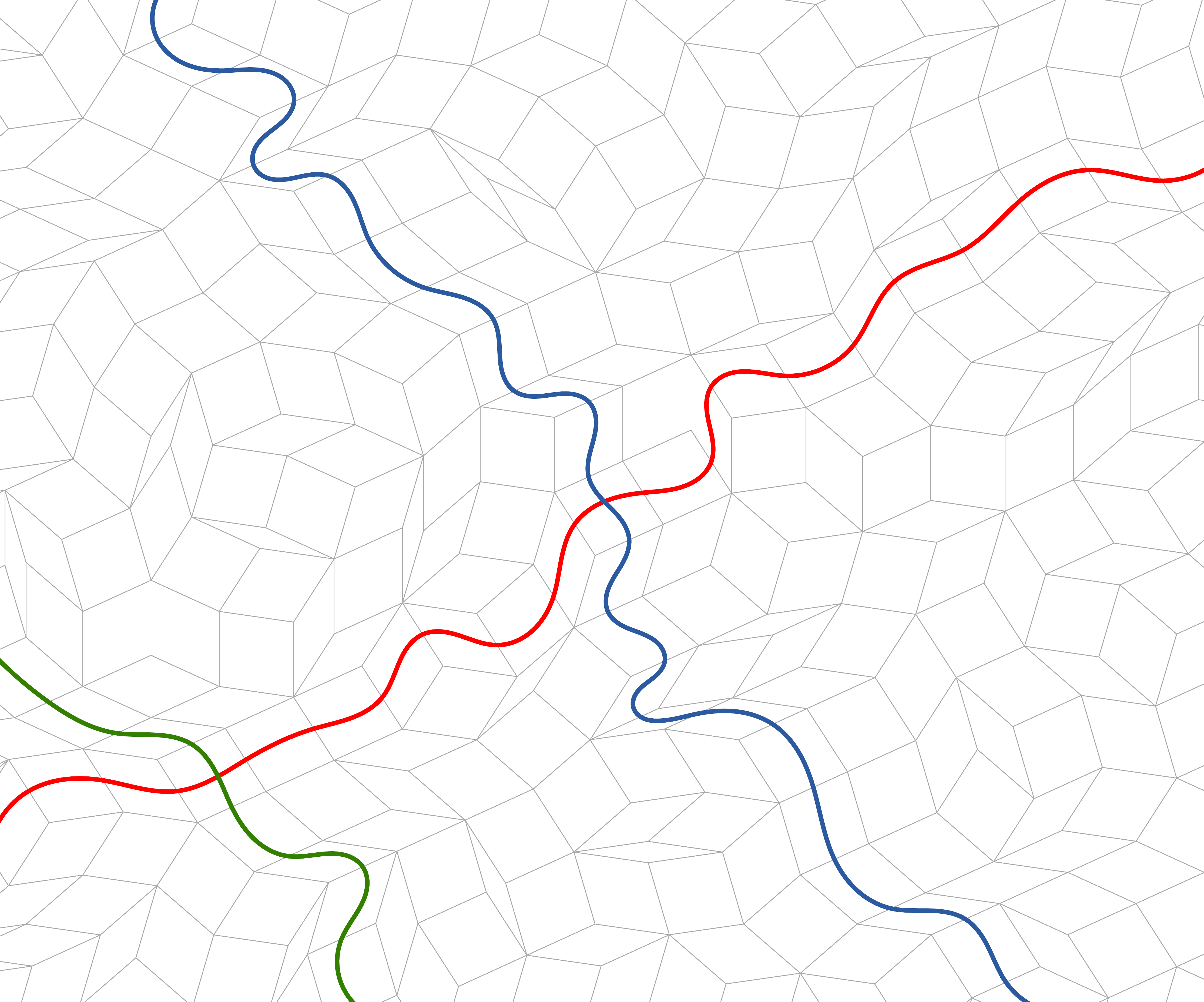} 
    \end{tabular}
    \caption{
     Top left: piece of an infinite isoradial graph $\Gs$ (in black) with the circumcircles of the faces.
      Top right: the same piece of infinite graph $\Gs$ 
      with its diamond graph~$\GR$. Bottom left: the isoradial graph
      superimposed with its dual graph, whose vertices are the centers of the
      circumcircles. Bottom right: the diamond
      graph with a few train-tracks pictured as paths of the dual graph of $\GR$.}
    \label{fig:Iso1}
  \end{center}
\end{figure}


\subsubsection{Diamond graph, angles and train-tracks}\label{sec:diamondetc}

The \emph{diamond graph}, denoted $\GR$, is constructed from
an isoradial graph $\Gs$ and its dual $\Gs^*$.
Vertices of $\GR$ are those of~$\Gs$ and those of $\Gs^*$. A dual vertex of $\Gs^*$ is joined to all primal
vertices on the boundary of the corresponding face, see Figure~\ref{fig:Iso1}
(top right). Since edges of the diamond graph $\GR$ are radii of circles, 
they all have length $1$, and can be assigned a direction $\pm e^{i\overline{\alpha}}$. Note that faces of $\GR$ are 
side-length $1$ rhombi.

Using the diamond graph, angles can naturally be assigned to edges of the graph~$\Gs$ as follows.
Every edge $e$ of $\Gs$ is the diagonal of exactly one
rhombus of $\GR$, and we let $\overline{\theta}_e$ be the half-angle at
the vertex it has in common with $e$, see Figure~\ref{fig:rhombus_angle}.
Note that we have $\overline{\theta}_e\in(0,\frac{\pi}{2})$, because circumcircles are assumed to be in the interior of the faces.
From now on, we actually ask more and suppose that there exists $\eps>0$, such that $\overline{\theta}_e\in(\eps,\frac{\pi}{2}-\eps)$.
We also assign two rhombus vectors to the edge $e$, denoted 
$e^{i\overline{\alpha}_e}$ and $e^{i\overline{\beta}_e}$, see Figure~\ref{fig:rhombus_angle}, and
we assume that $\overline{\alpha}_e,\overline{\beta}_e$ satisfy $\frac{\overline{\beta}_e-\overline{\alpha}_e}{2}=
\overline{\theta}_e$.

\begin{figure}[h]
\begin{center}
\resizebox{0.18\textwidth}{!}{\input 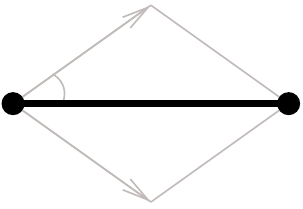_t}
\end{center}
\caption{An edge $e$ of $\Gs$ is the diagonal of a rhombus of $\GR$, defining the angle $\overline{\theta}_e$ and the rhombus vectors
$e^{i\overline{\alpha}_e}$ and $e^{i\overline{\beta}_e}$.}
\label{fig:rhombus_angle}
\end{figure}

A \emph{train-track} of an infinite isoradial graph~$\Gs$ is a 
bi-infinite chain of adjacent rhombi of $\GR$ which does not turn: on entering a face, it exits along the opposite 
edge~\cite{KeSchlenk}. 
As a consequence, each rhombus in a train-track has an edge parallel to a fixed direction 
$\pm e^{i\overline{\alpha}}$, known as the \emph{direction of the train-track}. 
Train-tracks are also known as \emph{de Bru{\ij}n lines} in the field of
non-periodic tilings~\cite{deBruijn1, deBruijn2}, or \emph{rapidity lines} in
integrable systems~\cite{Baxter:Zinv}; the terminology \emph{line} refers to the
representation of
train-tracks as paths of the dual graph of $\GR$, see Figure~\ref{fig:Iso1}
(bottom right).
In \cite{KeSchlenk}, they are used to give a necessary and sufficient condition for a 
planar graph to have an isoradial embedding.

A train-track is said to \emph{separate} two vertices $x$ and $y$ of $\GR$ if every path connecting $x$ and $y$ crosses this train-track. A path
from $x$ to $y$ in $\GR$ is said to be \emph{minimal} if all its edges cross train-tracks separating $x$ from $y$, and 
each such train-track is crossed exactly once.
An example of minimal path and non-minimal one is given in Figure~\ref{fig:Isoradial18}.

\subsubsection{Isoradial graphs as monotone surfaces of the hypercubic lattice}\label{def:quasi}

An isoradial graph $\Gs$ is said to be \emph{quasicrystalline} if the number
$\ell$ of possible directions $\pm e^{i\overline{\alpha}}$ assigned to edges of its
diamond graph $\GR$ is finite; $\ell$ is known as the \emph{dimension} of the
isoradial graph.
The degree of a vertex of $\Gs$ is at most $2\ell$, and
at a vertex of its diamond graph $\GR$, there can be edges with direction
$\pm e^{i\overline{\alpha}_1},\dotsc,\pm e^{i\overline{\alpha}_\ell}$. 
The graph $\GR$ can then be seen as the
projection of a monotone surface in $\mathbb{Z}^\ell$, see \cite{Thurston}
for $\ell=3$, and also for example~\cite{BobenkoMercatSuris,BodiniFerniqueRemila}, where the lattice $\mathbb{Z}^\ell$ is 
spanned by unit vectors $e_1,\dotsc, e_\ell$, \emph{i.e.}, the image by the linear map $e_j\mapsto e^{i\overline\alpha_j}$.
%
%
%
Rhombic faces of $\GR$ are images of square $2$-faces of $\mathbb{Z}^\ell$.
Since the surface is monotone, any path on the graph $\GR$ can be lifted to a
nearest-neighbor path in $\mathbb{Z}^\ell$.

\subsubsection{Natural operations on isoradial graphs}\label{sec:NaturalOperations}


\paragraph{Train-track tilting.}
Recall that a direction $\pm e^{i\overline{\alpha}}$ is assigned to every train-track of $\Gs$. 
If we slightly change the angle $\overline{\alpha}$, so that none of the
rhombi of the train-track becomes flat during the deformation, we get a new isoradial
embedding of the graph $\Gs$. The structure of the graph has not changed, however,
if quantities are defined through geometric characteristics of the isoradial
embedding (\emph{e.g.}, the angles of the rhombi as is the case in this
article), then this operation provides a continuous one-parameter family of
transformations for these quantities. This operation is called \emph{train-track
tilting}. It is introduced in \cite{Kenyon3} and used in the proof of Theorem~\ref{thm:free_energy} in
Section~\ref{sec:free_energy}.

\paragraph{Star-triangle transformation.}
If $\Gs$ has a \emph{star}, that is
a vertex of degree $3$, it can be replaced by a
\emph{triangle} by removing the vertex and connecting its three neighbors. The
graph obtained in this way is still isoradial: its diamond graph is obtained by
performing a \emph{cubic flip} in $\GR$,
see Figure~\ref{fig:figStarTriangle1}. This operation is involutive.

\begin{figure}[ht]
\begin{center}
\resizebox{0.5\textwidth}{!}{\input 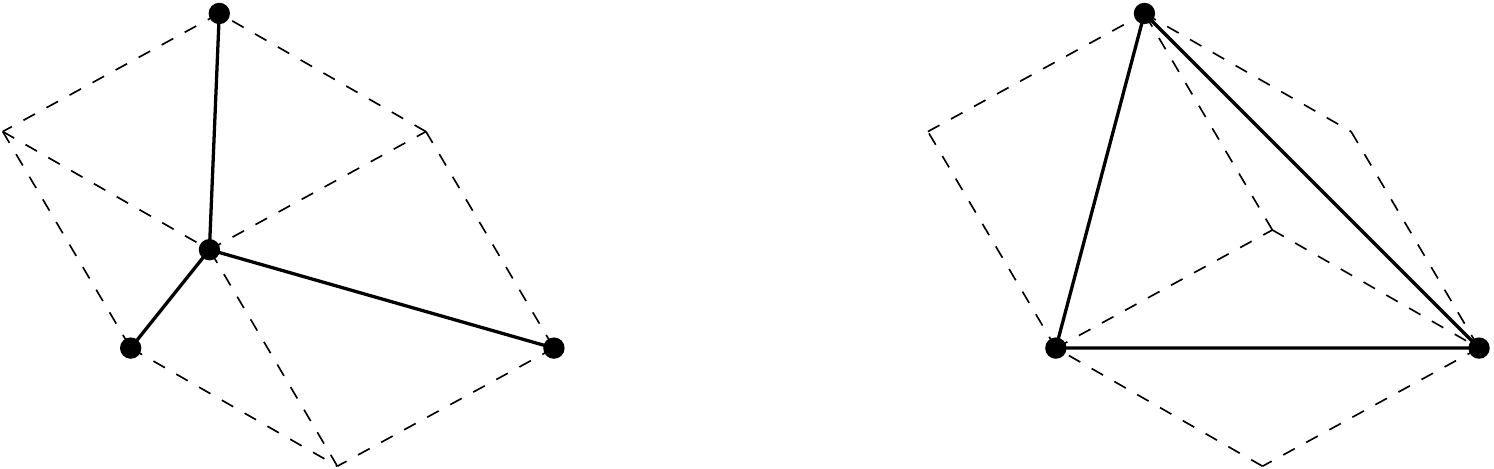_t}
\end{center}
\caption{Star-triangle transformation in an isoradial graph $\Gs$ and underlying cubic-flip in the diamond graph $\GR$
\label{fig:figStarTriangle1}}
\end{figure}


The star-triangle transformation is locally transitive in the following sense: if $B$ is a
bounded, connected domain obtained as the union of faces of $\GR$, then any
other tiling of $B$ with rhombi of the same edge-length can be obtained from the
initial one by a sequence of cubic flips~\cite{KenyonParall}. As a consequence,
two isoradial graphs coinciding outside of a bounded domain can be transformed
into one another by a sequence of star-triangle transformations.


This operations has a natural geometric interpretation from the monotone surface
point of view: a cubic flip corresponds to locally deforming the monotone surface
so that it uses different $2$-faces to go around the same $3$-cube of
$\ZZ^\ell$. 

If $\Gs$ has no location where such an operation can be performed, it means that
there is no triple of train-tracks intersecting eachother. However, if a pair
of train-tracks are going to infinity by staying at distance one in ${\GR}^*$,
then we can insert a rhombus ``at infinity'' to create a
location where to perform this transformation.

This operation, connected to the third Reidemeister move in knot theory, plays an
important role in integrable systems in two dimensions, and is closely related
to the Yang-Baxter equations~\cite{Perk:YB}.

\subsection{Elliptic functions}
\label{sec:EllipticFunctions}

This article strongly relies on Jacobi elliptic functions, which we now present.
Useful formulas are given in Appendix~\ref{app:elliptic},
our reference is the book of Lawden \cite{La89} and the one of Abramowitz and Stegun \cite{AS}. 

\paragraph{Elliptic modulus and quarter periods.}

Let $k\in[0,1]$, referred to as the \emph{elliptic modulus}, and let $k'=\sqrt{1-k^2}$ be the complementary elliptic modulus. 
The \emph{complete elliptic integral of the first kind}, denoted $K=K(k)$, and the 
\emph{complete elliptic integral of the second kind}, denoted $E=E(k)$, are defined by:
\begin{equation*}
     K=K(k)=\int_{0}^{{\pi}/{2}}\frac{1}{\sqrt{1-k^2 \sin^2 \tau}}\,\ud\tau,\quad 
     E=E(k)=\int_0^{{\pi}/{2}} \sqrt{1-k^2 \sin^2\tau}\,\ud\tau.
\end{equation*}
The complementary integrals are $K'=K'(k)=K(k')$ and $E'=E'(k)=E(k')$.
They satisfy Legendre's identity \cite[3.8.29]{La89}:
\begin{equation}
\label{eq:Legendre}
     EK'+E'K-KK'=\frac{\pi}{2}.
\end{equation}
\paragraph{Jacobi elliptic functions.}
There are twelve Jacobi elliptic functions, each of them corresponds to an arrow
drawn from one corner of a rectangle to another, see
Figure~\ref{Rectangle_elliptic}. The corners of the rectangle are labeled, by
convention, ${\rm s}$, ${\rm c}$, ${\rm d}$ and ${\rm n}$. These points
respectively correspond to the origin $0$, $K$ on the real axis, $K + iK'$, and
$iK'$ on the imaginary axis. The numbers $K$ and $iK'$ are called
the quarter periods. The twelve Jacobi elliptic functions are then $\pq(\cdot\vert k)$, where each of ${\rm p}$ and ${\rm q}$ is a different one of the
letters ${\rm s}$, ${\rm c}$, ${\rm d}$, ${\rm n}$. The Jacobi elliptic
functions are then the unique doubly periodic, meromorphic functions on $\mathbb
C$, satisfying the following properties \cite[Chapter 16]{AS}:
\begin{itemize}
  \item There is a simple zero at the corner ${\rm p}$, and a simple pole at the
    corner ${\rm q}$.
  \item The step from ${\rm p}$ to ${\rm q}$ is equal to half a period of the
    function $\pq(\cdot\vert k)$. The function $\pq(\cdot\vert k)$ is also periodic
    in the other two directions, with a period such that the distance from ${\rm
    p}$ to one of the other corners is a quarter period.
  \item The coefficient of the leading term in the expansion of $\pq(u\vert k)$ in ascending powers of $u$ about $u=0$ is $1$. In other words,
    the leading term is $u$, $1/u$ or $1$, according to whether $u=0$ is a zero, a pole
    or an ordinary point.
\end{itemize}

For instance, the function $\sc(\cdot\vert k)$ (which is the most important Jacobi
elliptic function here) has a simple zero at $0$ (with residue $1$), a simple
pole at $K$, and is doubly periodic with periods $2K$ and $4iK'$.


\unitlength=0.4cm
\begin{figure}[ht]
\vspace{25mm}
\begin{center}
\begin{tabular}{cccc}
\begin{picture}(0,0)(0,0)
\put(-6,0){\line(1,0){12}}
\put(-6,0){\line(0,1){3}}
\put(6,0){\line(0,1){3}}
\put(-2,6){\line(1,0){4}}
\put(2,6){\line(1,0){4}}
\put(-6,6){\line(1,0){4}}
\put(-6,3){\line(0,1){3}}
\put(6,3){\line(0,1){3}}
\put(-6.15,-0.15){{$\bullet$}}
\put(-6.5,-0.5){${\rm s}$}
\put(-5.8,0.5){$(0)$}
\put(4.2,0.5){$(K)$}
\put(-5.8,5){$(iK')$}
\put(1.7,5){$(K+iK')$}
\put(6.2,-0.5){${\rm c}$}
\put(6.2,6.3){${\rm d}$}
\put(-6.5,6.3){${\rm n}$}
\put(-6.15,5.85){{$\bullet$}}
\put(5.85,-0.15){{$\bullet$}}
\put(5.85,5.85){{$\bullet$}}
\end{picture}
\end{tabular}
\end{center}
\caption{The rectangle $[0,K]+[0,iK']$ and the four corners
labeled ${\rm s}, {\rm c}, {\rm d}, {\rm n}$.}
\label{Rectangle_elliptic}
\end{figure}
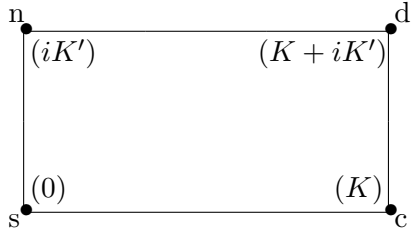

Jacobi functions $\pq(\cdot\vert k)$ also satisfy anti-periodicity relations: if $2L\in\{2K,2iK',2K+2iK'\}$ is not a period, 
then $\pq(\cdot+2L\vert k)=-\pq(\cdot\vert k)$, see \cite[16.2 and 16.8]{AS}.

\paragraph{Degenerate elliptic functions.}

Elliptic functions contain as limiting cases trigonometric functions ($k=0$)
and hyperbolic functions ($k=1$). For instance, $\sc$ degenerates for $k=0$ into
$\tan$ and for $k=1$ into $\sinh$; $\dn$ degenerates for $k=0$ to $1$, see \cite[16.6]{AS}.
Note that one of the
periods goes to infinity: for $k=0$ we have $K=\pi/2$ and
$K'=\infty$, while for $k=1$, $K=\infty$ and $K'=\pi/2$; explaining why the
limit functions are periodic in one direction only.

From now on, we suppose that the elliptic modulus $k$ is in $[0,1)$.

\paragraph{Integrals of squared Jacobi elliptic functions.} 
Following \cite[16.25.1]{AS}, we introduce
\begin{equation}
\label{eq:jacob_Dc}
    \forall\,u\in\CC,\quad\Dc(u\vert k)=\int_0^{u} \dc^2(v\vert k)\,\ud v.
\end{equation}
Since $\dc^2(\cdot\vert k)$ has no residue at its poles, the function $\Dc(\cdot\vert k)$ is meromorphic on $\mathbb C$. It is
related to Jacobi 
epsilon 
function \cite[16.26.7]{AS}.

The definition of the massive Laplacian of
Section~\ref{sec:massive_Lap}
involves the function $\Arm(\cdot\vert k)$, defined as
\begin{equation}\label{def:Abis}
    \forall\,u\in\CC,\quad\Arm(u\vert k)=\frac{1}{k'}\left(\Dc(u\vert k)+\frac{E-K}{K}u\right).
\end{equation}
The function $\Arm(\cdot\vert k)$ is periodic in the direction $2K$ and quasi-periodic in $2iK'$, 
see~\eqref{cor:Armbis:item1} and~\eqref{cor:Armbis:item1bis}.

The explicit expression of the Green function of Theorem~\ref{thm:expression_Green} involves the function $H(\cdot\vert k)$, 
defined from the function $\Arm(\cdot\vert k)$ by
\begin{equation}
\label{def:H}
     \forall\,u\in\CC,\quad H(u\vert k)=  \frac{-ik'K'}{\pi}\Arm\Bigl(\frac{iu}{2}\Big\vert k'\Bigr).
\end{equation}

Properties and identities satisfied by the functions $\Arm(\cdot\vert k)$ and $H(\cdot\vert k)$ are stated
in Lemmas~\ref{cor:Armbis}  and~\ref{lem:h} of Appendix~\ref{app:ellipticAH}.

\paragraph{One-parameter family of angles.}
Finally, we define a one-parameter family of angles, depending on the elliptic
modulus. For every $k\in[0,1)$ and every edge $e$ of $\Gs$,
\begin{equation*}
 \theta_e=\overline{\theta}_e \frac{2K}{\pi}\in(0,K),\quad\alpha_e=\overline{\alpha}_e \frac{2K}{\pi},\quad \beta_e=\overline{\beta}_e \frac{2K}{\pi}.
\end{equation*}

Since the elliptic modulus is fixed, the dependence in $k$ is not made explicit in the notation $\theta_e$, $\alpha_e$, $\beta_e$.

\section{Massive Laplacian on isoradial graphs}
\label{sec:massive_Lap}

In Section~\ref{sec:defLap}, we introduce a one-parameter family $(\Delta^{m(k)})_{k\in[0,1)}$ of 
massive Laplacian operators defined on an infinite isoradial graph $\Gs$, involving elliptic functions. 
We prove that the mass is non-negative and that the conductances and mass are analytic at $k=0$. 
Then, in Section~\ref{sec:integrability}, we show
that the equation $\Delta^{m(k)}f=0$ satisfies \emph{$3$-dimensional consistency}. Finally, in Section~\ref{sec:expofunction}, we introduce
the \emph{discrete $k$-massive exponential function}, which induces a family of massive harmonic functions. The latter play a key role
in the explicit formula for the massive Green function.

In the whole of this section, we let $\Gs$ be an infinite isoradial graph, and we fix an elliptic modulus $k\in[0,1)$.
Let us introduce some notation for edges and angles around a vertex $x$ of $\Gs$ of degree $n$:
denote by $e_1=x x_1,\dotsc,e_n=x x_n$ edges incident to $x$; for every edge $e_j$, denote by
$\overline{\theta}_j$ its rhombus half-angle and by $e^{i\overline{\alpha}_j},e^{i\overline{\alpha}_{j+1}}$ 
its two rhombus vectors, see Figure~\ref{fig:notation}.

\begin{figure}[h]
\begin{center}
\resizebox{0.35\textwidth}{!}{\input 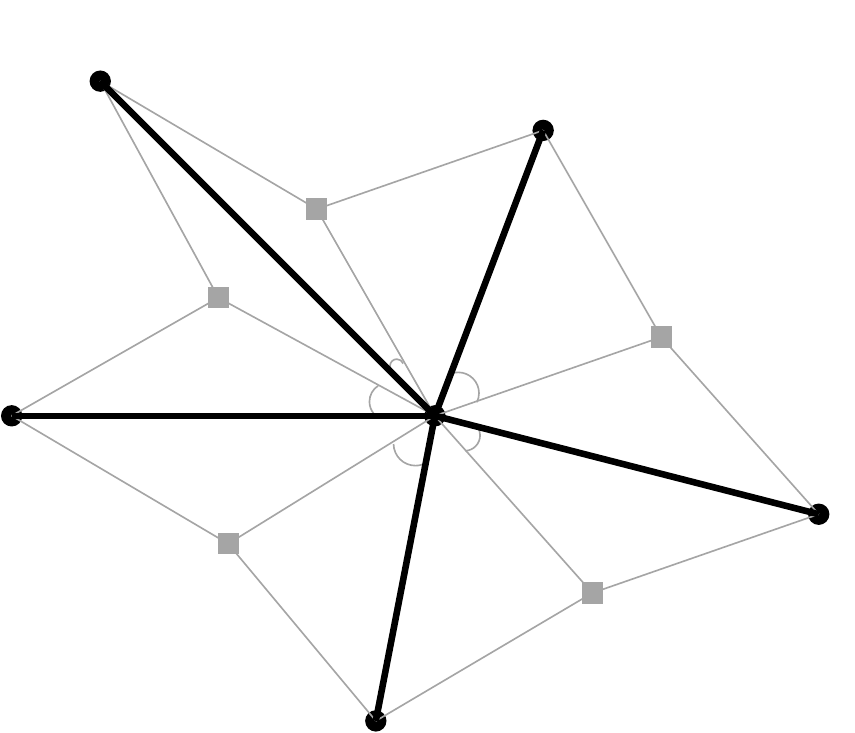_t}
\end{center}
\caption{Notation for edges and angles around a vertex $x$ of $\Gs$ of degree $n$.}
\label{fig:notation}
\end{figure}

\subsection{Definitions}
\label{sec:defLap}

\begin{defi}
Suppose that edges of the graph $\Gs$ are assigned positive \emph{conductances} $(\rho(e))_{e\in\Es}$ 
and that vertices are assigned (squared) \emph{masses} $(m^2(x))_{x\in\Vs}$. Then,
the \emph{massive Laplacian operator} $\Delta^{m}:\CC^\Vs\rightarrow\CC^\Vs$ 
is defined by:
\begin{align}
(\Delta^m\,f)(x)
&=\sum_{y\sim x} \rho(xy)[f(x)-f(y)]+m^2(x)f(x),\label{equ:operator_general} \\
&=d(x)f(x)-\sum_{y\sim x} \rho({xy})f(y),\nonumber
\end{align}
where 
$d(x)=m^2(x)+\sum_{y\sim x}
\rho(xy)$. The massive Laplacian operator is represented by an infinite matrix,
also denoted $\Delta^{m}$, whose
rows and columns are indexed by vertices of $\Gs$, and whose coefficients are given by:
\begin{equation*}
\forall\, x,y\in \Vs,\quad \Delta^{m}(x,y)=
\begin{cases}
  -\rho(xy)&\text{if $y\sim x$},\\
  d(x)&\text{if $y=x$},\\
  0 &\text{otherwise}.
\end{cases}
\end{equation*}
A function $f$ in $\CC^{\Vs}$ is \emph{massive harmonic} on $\Gs$ if $\Delta^{m} f =0$.
\end{defi}

We now introduce a one-parameter family of conductances and masses, indexed by the elliptic modulus $k\in[0,1)$.

\begin{defi}\label{def:massiveLap}
To every edge $e$ of $\Gs$, assign the \emph{conductance} $\rho(e)=\rke$, defined by:
\begin{equation}
\label{equ:def_conductances}
     \rke=\sc(\theta_e\vert k).
\end{equation}
To every vertex $x$ of degree $n$ of $\Gs$, assign the \emph{mass} $m^2(x)=m^2(x\vert k)$, defined by:
\begin{equation}
\label{eq:def_mass}
     m^2(x\vert k)=\sum_{j=1}^{n}[\Arm(\theta_j\vert k)-\rho(\theta_j\vert k)] \ \Leftrightarrow \ 
     d(x\vert k)=\sum_{j=1}^{n} \Arm(\theta_j\vert k),
\end{equation}
where $d(x\vert k)$ is the \emph{diagonal term} at the vertex $x$,
and $\Arm(\cdot\vert k)$ is given by Equation \eqref{def:Abis}. 

The main object studied in this paper is the 
corresponding \emph{$k$-massive Laplacian operator} denoted by $\Delta^{m(k)}$, defined by:
\begin{equation}
 (\Delta^{m(k)}\,f)(x)=
\sum_{j=1}^n [A(\theta_j\vert k)f(x)-\sc(\theta_j\vert k)f(x_j)].
\label{equ:operator}
\end{equation}

\end{defi}

\textbf{Notation.} From now on, to simplify notation, we only keep the dependence in $k$ in statements and omit it in
proofs, writing
$\Delta^m$, $\rho(\theta_e)=\sc(\theta_e)$, $m^2(x)$, $d(x)=\sum_{j=1}^n \Arm(\theta_j)$.

From Definition~\ref{def:massiveLap}, it is not clear that the mass is non-negative
and that the conductance and mass are analytic at $k=0$.
This is proved in the next two results.

\begin{prop}
\label{prop:neg_mass}
For every $k\in[0,1)$ and every vertex $x$ of $\Gs$, $m^2(x\vert k)\geq0$; it is equal to $0$ if and only if $k=0$.
\end{prop}

\begin{proof}
Returning to the definition of $m^2(x)$, see~\eqref{eq:def_mass}, it 
suffices to show that each term $\Arm(\theta_j)-\sc(\theta_j)$ is positive when $k>0$ and equal to 0 when $k=0$, for
$\theta_j\in(0,K)$.
Consider the function $f(u):=\Arm(u)-\sc(u)$ on $[0,K]$. Then $f(0)=0$ by definition~\eqref{def:Abis}. 
To prove that $f(K)=0$, we observe that as $u\to0$, using \eqref{id:scK} and \eqref{cor:Armbis:item2},
\begin{equation*}
     f(K-u) = \Arm(K-u)-\sc(K-u)=-A(u)+\frac{\dc(u)-\cn(u)}{k'\sn(u)}=O(u)+\frac{O(u^2)}{u+O(u^3)}\to 0.
\end{equation*}
Moreover, using formulas~\eqref{eq:derivatives_Jacobi_functions} and~\eqref{eq:deriv_Abis}, we have
\begin{equation*}
\frac{\text{d}^2f(u)}{\text{d}u^2}=\frac{\text{d}}{\text{d}u}\left(\frac{\dc^2(u)}{k'}-\frac{K-E}{k'K}-\dc(u)\nc(u)\right)=-\frac{\sn(u)}{\cn^3(u)}(k'-\dn(u))^2.
\end{equation*}
When $k>0$, the second derivative of $f$ is negative on $(0,K)$ implying that $f$ is strictly concave on $(0,K)$ and thus positive. When $k=0$, the first
derivative of $f$ is identically $0$ so that $f$ is constant and equal to $0$.
%
\end{proof}

\begin{lem}
\label{lem:analytic_weight}
For every edge $e$ and every vertex $x$ of $\Gs$, the conductance $\rho(\theta_e\vert k)$ and the mass 
$m^2(x\vert k)$ are analytic at $k=0$.
\end{lem}
\begin{proof}
We use the expansion \cite[16.23.9]{AS} of $\sc$ in terms of the nome $q=\exp(-\pi K'/K)$:
\begin{equation*}
     \rho(\theta_e\vert k)=\sc(\overline{\theta}_e\frac{2K(k)}{\pi}\vert k)=\frac{\pi}{2k'K(k)}\tan(\overline{\theta}_e)+
     \frac{2\pi}{k'K(k)}\sum_{n=1}^{\infty}(-1)^{n}\frac{q^{2n}}{1+q^{2n}}\sin(2n\overline{\theta}_e).
\end{equation*}
Since $1/k'$, $K$ and $q$ are analytic at $0$ \cite[17.3.11 and 17.3.21]{AS} for $K$ and $q$, respectively
we obtain the analyticity of the conductances.

The addition formula \eqref{cor:Armbis:item3} for $\Arm$ reduces the analyticity of the masses to those of the conductances, thereby concluding the proof.
\end{proof}

\paragraph{Example: $\Gs=\ZZ^2$.}
For every edge $e$, we have
$\overline{\theta}_e=\frac{\pi}{4}$, \emph{i.e.}, $\theta_e=\frac{K}{2}$, implying by \cite[2.4.10]{La89} that
$\rho(e)=\sc(\frac{K}{2})=\frac{1}{\sqrt{k'}}$. Moreover, using \eqref{eq:def_mass}
we have, for every vertex $x$,
\begin{equation*}
     m^2(x)=4\left(\Arm\Bigl(\frac{K}{2}\Bigr)-\sc\Bigl(\frac{K}{2}\Bigr)\right)=2\Bigl(1-\frac{1}{\sqrt{k'}}\Bigr)^2,
\end{equation*}
where to derive $\Arm(\frac{K}{2})$ we have used \eqref{cor:Armbis:item2} with $u=\frac{K}{2}$ and again \cite[2.4.10]{La89}.

In particular, the analyticity of the conductances and masses around $k=0$ ($k'=1$) proved in Lemma~\ref{lem:analytic_weight} is 
straightforward in this case.

\subsection{Massive harmonic functions and the star-triangle transformation}\label{sec:integrability}



Proposition~\ref{prop:3Dconsistency} below proves that the equation $\Delta^{m(k)} f=0$
satisfies \emph{$3$-dimensional consistency}~\cite{BobenkoSuris},
meaning that massive harmonic functions are compatible under star-triangle transformations of the underlying 
graph defined in Section~\ref{sec:NaturalOperations}.

Let us denote by $\Gsstar$ a finite or infinite isoradial graph containing a star, and by $\Gstriang$ the isoradial graph obtained from $\Gsstar$
by performing a star-triangle transformation. The vertex set
of $\Gsstar$ is the vertex set of $\Gstriang$ plus $x_0$, see Figure~\ref{fig:star_triangle}. 


\begin{prop}
  \label{prop:3Dconsistency}
\begin{itemize}
    \item Let $f$ be a function on $\Gsstar$. If
      $f$ is massive harmonic at $x_0$, then
      for all vertices $x$ of $\Gstriang$,
      $(\Deltastar^{m(k)} f)(x)=(\Deltatriang^{m(k)} f)(x)$.
    \item Conversely, let $f$ be a function on
      $\Gstriang$. Then there is
      a unique way of extending it to the vertex $x_0$ in such a way that
      $f$ is massive harmonic at $x_0$ and
      $(\Deltastar^{m(k)} f) (x) = (\Deltatriang^{m(k)} f)(x)$ for all vertices $x$ of
      $\Gstriang$.
  \end{itemize}
\end{prop}

\begin{figure}[ht]
\begin{center}
\resizebox{0.8\textwidth}{!}{\input 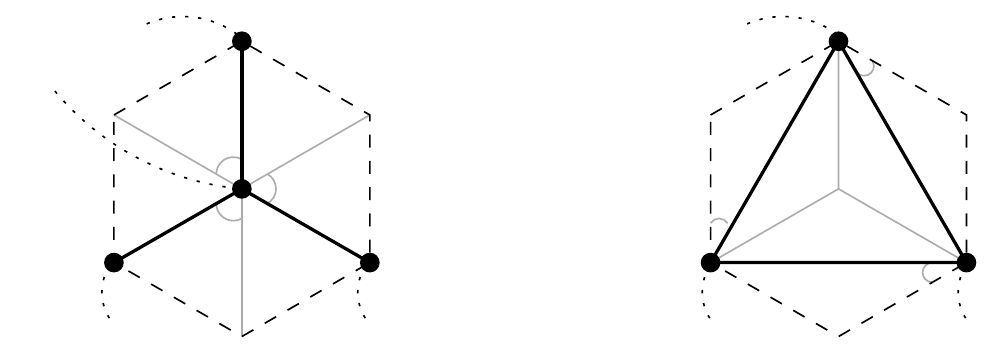_t}
\end{center}
\caption{\emph{Star-triangle transformation} and notation. If an isoradial graph $\Gsstar$
has a \emph{star} (left), \emph{i.e.}, a vertex $x_0$ of degree $3$, it can be transformed into a new
isoradial graph $\Gstriang$ having a \emph{triangle} (right) connecting the three neighbors $x_1,x_2,x_3$ of
$x_0$, by performing a cubic-flip on the underlying diamond graph $\GR$, and vice-versa.
\label{fig:star_triangle}}
\end{figure}

\begin{proof}
Refer to Figure~\ref{fig:star_triangle} for notation of vertices and weights of the star/triangle.
Consider a function $f$ on $\Gsstar$, and also denote by $f$ its restriction to $\Gstriang$.
Every vertex $x$ which is not one of $x_1,x_2,x_3,x_0$ has the same neighbors in $\Gsstar$ and $\Gstriang$, so that:
\begin{equation*}
  (\Deltastar^m f)(x)=(\Deltatriang^m f)(x).
\end{equation*}
Therefore, we only need to consider what happens at
vertices $x_1,x_2,x_3,x_0$. Suppose that we have proved the following:
\begin{equation}\label{equ:intermIntegr}
\forall\,i\in\{1,2,3\},\quad
\rho(\theta_j)\rho(\theta_k)(\Deltatriang^m f-\Deltastar^m f)(x_i)
=\Deltastar^m f(x_0),
\end{equation}
where $\{i,j,k\}=\{1,2,3\}$. Then, the first part of Proposition~\ref{prop:3Dconsistency} immediately follows.

For the second part,
consider a 
function $f$ on $\Gstriang$. Asking that its extension to $\Gsstar$ is massive harmonic
at $x_0$
requires that
\begin{equation*}
     \Deltastar^m f(x_0)=\bigl[m^2(x_0)+\sum_{\ell=1}^3\rho(\theta_\ell)\bigr]f(x_0)-\sum_{\ell=1}^3 \rho(\theta_\ell)f(x_\ell)=0,
\end{equation*}
which determines the value of $f$ at $x_0$. But then, by
Equation~\eqref{equ:intermIntegr}, the Laplacian on $\Gsstar$ of $f$ coincides
with the one of $f$ on $\Gstriang$ at the vertices
$x_1,x_2,x_3$, which concludes the proof of the second part.

We are thus left with proving Equation~\eqref{equ:intermIntegr}. Fix $i\in\{1,2,3\}$, and let $\mathscr{O}_i$ be the contribution to the massive Laplacian 
evaluated at $x_i$, coming from vertices outside of the 
triangle/star. It is common to both graphs, and returning to Expression~\eqref{equ:operator_general}, we have 
\begin{align*}
(\Deltastar^m f)(x_i)&=[m^2(x_i)+\rho(\theta_i)]f(x_i)-\rho(\theta_i)f(x_0)+\mathscr{O}_i,\\
(\Deltatriang^m f)(x_i)&=[{m'}^2(x_i)+\sum_{\ell\neq i}\rho(K-\theta_\ell)]f(x_i)-\rho(K-\theta_j)f(x_k)-\rho(K-\theta_k)f(x_j)+
\mathscr{O}_i.
\end{align*}
Using Equation~\eqref{equ:m1_bis} of Appendix~\ref{app:elliptic},
\begin{equation*}
{m'}^2(x_i)-m^2(x_i)=\rho(\theta_i)-\sum_{\ell\neq i}\rho(K-\theta_\ell)-k'\rho(K-\theta_j)\rho(K-\theta_k)\rho(\theta_i),
\end{equation*}
and taking the difference yields that $(\Deltatriang^m f)(x_i)-\Deltastar^m f$ is equal to
\begin{align*}
-\rho(K-\theta_j)f(x_k)-\rho(K-\theta_k)f(x_j)-k'\rho(K-\theta_j)\rho(K-\theta_k)\rho(\theta_i)f(x_i)+
\rho(\theta_i)f(x_0).
\end{align*}
Multiplying this equation by $k'\rho(\theta_j)\rho(\theta_k)$, using
the fact that $k'\rho(K-\theta_\ell)\rho(\theta_\ell)=1$ (see Identity~\eqref{id:scK}), and $k'\prod_{\ell=1}^3\rho(\theta_\ell)=m^2(x_0)+\sum_{\ell=1}^3\rho(\theta_\ell)$
(see Equation~\eqref{equ:m0}), we conclude:
\begin{equation*}
k'\rho(\theta_j)\rho(\theta_k)(\Deltatriang^m f-\Deltastar^m f)(x_i)=
[m^2(x_0)+\sum_{\ell=1}^3\rho(\theta_\ell)]f(x_0)-\sum_{\ell=1}^3 \rho(\theta_\ell)f(x_\ell)=
(\Deltastar f)(x_0).\qedhere
\end{equation*}
\end{proof}
When extending $f$ from $\Gstriang$ to $\Gsstar$, we have four equations which
could individually determine the value of $f(x_0)$: the massive harmonicity
condition at $x_0$, and the three equations from~\eqref{equ:intermIntegr}.
The remarkable fact, proved in Proposition~\ref{prop:3Dconsistency}, is that all these conditions give the same result; this is also
known as \emph{$3$-dimensional consistency} of the equation $\Delta^{m(k)} f=0$, because of the
geometric interpretation of the star-triangle transformation on quasicrystalline
isoradial graphs seen as monotone surfaces in $\ZZ^\ell$ \cite{BobenkoSuris}.
This condition is
then sufficient to ensure $\ell$-dimensional consistency, in the following sense:
let $(\Gs_{n})_{n}$ be a sequence of isoradial graphs where two successive
graphs differ by a star-triangle transformation, representing
a discrete sequence of monotone surfaces in $\mathbb{Z}^\ell$. Then, by Proposition~\ref{prop:3Dconsistency},
from a massive harmonic function $f_0$ on $\Gs_0$ one
can construct, in a consistent way, a harmonic function $f_n$ on
$\Gs_n$, for every $n$. In particular, if the sequence
$(\Gs_n)_n$ spans the whole $\ell$-dimensional lattice $\ZZ^\ell$ (namely, for
every vertex of $\mathbb{Z}^\ell$, there exists an $n$ such that this vertex is in
the monotone surface $\Gs_n$), then a massive harmonic function
on $\Gs_0$ can uniquely be extended to $\ZZ^\ell$, and its restriction to any
monotone surface, viewed as an isoradial graph, is again massive harmonic.

This property is in the spirit of integrable equations on quad-graphs discussed
in~\cite{AdlerBobenkoSuris,BobenkoSuris}. Our massive Laplacian satisfies a so-called
\emph{three-leg equation}, using the terminology of~\cite{AdlerBobenkoSuris},
as shown in the forthcoming Equation~\eqref{eq:threeleg}, but it does not fit in
their classification of \emph{three-leg integrable equations},
because it does not satisfy their symmetry requirement and
does not allow to define values on $\Gs^*$.

\subsection{The discrete $k$-massive exponential function}\label{sec:expofunction}

In this section we introduce the \emph{discrete $k$-massive
exponential function}. In Proposition~\ref{prop:exp_harmo}, we prove that it
defines a family of massive harmonic functions.  
This is one of the key facts needed to prove the local formula for the massive Green function of Theorem~\ref{thm:expression_Green}. 

\subsubsection{Definition}

\begin{defi}\label{def:massiveexpo}
The \emph{discrete $k$-massive exponential function} or simply \emph{massive exponential function}, denoted $\expo_{(\cdot,\cdot)}(\cdot\vert k)$, 
is a function from $\VR\times\VR\times\CC$ to $\CC$. Consider a pair of vertices $x,y$ of $\GR$, and an edge-path
$x=x_1,\dotsc,x_n=y$ of the diamond-graph $\GR$ from $x$ to $y$; let $e^{i\overline{\alpha}_j}$ be the vector corresponding to the edge
$x_jx_{j+1}$, see Figure~\ref{fig:Isoradial18}. Then $\expo_{(x,y)}(\cdot\vert k)$ is defined inductively 
along the edges of the path as follows. For every $u\in\CC$,
\begin{align}\label{eq:recursive_def_expo}
\expo_{(x_j,x_{j+1})}(u\vert k) = i \sqrt{k'}\,\sc(u_{\alpha_j}\vert k),\quad
\expo_{(x,y)}(u\vert k)         = \prod_{j=1}^{n-1} \expo_{(x_j,x_{j+1})}(u\vert k),
\end{align}
where $u_\alpha=\frac{u-\alpha}{2}$, and recall that $\alpha_j=\overline{\alpha}_j\frac{2K}{\pi}$.
\end{defi}

Note that when $k=0$, one recovers the discrete exponential function
of~\cite{Mercat:exp}, see also~\cite{Kenyon3} after the change of variable $z=e^{iu}$.

\begin{figure}[h]
  \centering
  \resizebox{0.9\textwidth}{!}{\input 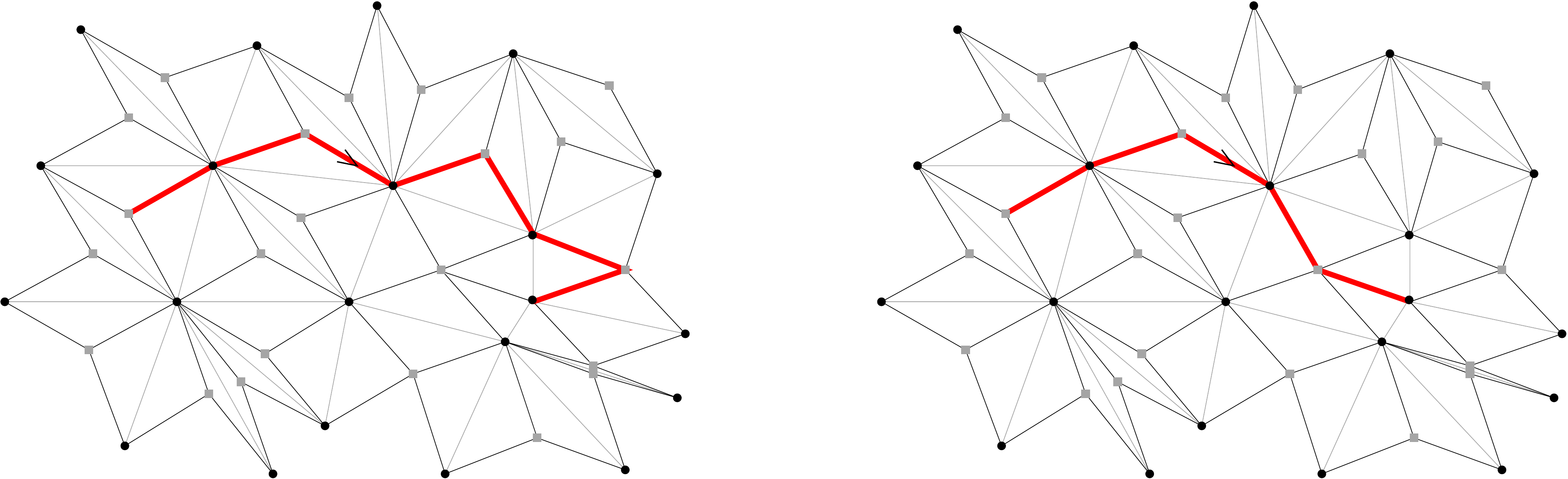_t}
  \caption{Examples of paths of $\GR$ from $x$ to $y$ used to compute the discrete massive
    exponential function $\expo_{(x,y)}(\cdot\vert k)$. The path on the right is minimal, whereas the one on the left is not.}
    \label{fig:Isoradial18}
\end{figure}

\begin{lem}\label{lem:welldefined}
  The discrete $k$-massive exponential function is well defined, that is, for every pair of vertices $x,y$ 
  of $\GR$, and for every $u\in\CC$, $\expo_{(x,y)}(u\vert k)$ is independent of the choice of edge-path
  from $x$ to $y$.
\end{lem}

\begin{proof}
  If $(x,y)$ is an edge of $\GR$ corresponding to a vector $e^{i\overline{\alpha}}$, then
  the edge $(y,x)$ corresponds to the vector $e^{i\overline{\alpha}+\pi}=e^{i\overline{\alpha+2K}}$. Observing that
  $u_{\alpha+2K}=u_\alpha-K$, we deduce by \eqref{id:scK}: 
    \begin{equation*}
    \expo_{(y,x)}(u\vert k) = i \sqrt{k'} \sc(u_{\alpha+2K}\vert k) =
    i\sqrt{k'}\times\frac{-1}{k'\sc(u_\alpha\vert k)}=\expo_{(x,y)}(u\vert k)^{-1}.
  \end{equation*}  
  This implies that the product of the local factors around any rhombus is equal to $1$. 
  Indeed, the contribution of a side of the rhombus
  comes with its inverse, which is the contribution of the opposite side.
  Therefore, the product of
  every closed path in $\GR$ is equal to $1$.
\end{proof}

\begin{rem}
\label{rem:encode}
A consequence of Definition~\ref{def:massiveexpo} and Lemma~\ref{lem:welldefined} is that the zeros (resp.\ poles) of $\expo_{(x,y)}(\cdot\vert k)$ are encoded by the steps of a minimal path from $x$ to $y$. Specifically, if the steps of a minimal path from $x$ to $y$ are $\{e^{i\overline{\alpha}_\ell}\}_\ell$, then the zeros (resp.\ poles) are $\{\alpha_\ell\}_\ell$ and $\{\alpha_\ell+4iK'\}_\ell$ (resp.\ $\{\alpha_\ell+2K\}_\ell$ and $\{\alpha_\ell+2K+4iK'\}_\ell$).
\end{rem}

The construction of a discrete massive harmonic function
from a starting
point, by successive multiplication by local factors along any path is called
a \emph{discrete zero curvature representation} of the solutions of the
equation~$\Delta^{m(k)}f=0$, see~\cite[Chapter~6]{BobenkoSuris} for analogous
constructions. This property, together with $3$-dimensional consistency proved in Proposition~\ref{prop:3Dconsistency},
means that the massive Laplacian $\Delta^{m(k)}$ is \emph{discrete integrable}.

\subsubsection{Restriction of the domain of definition}

Recall from Section~\ref{sec:EllipticFunctions} that the elliptic function $\sc(\cdot\vert k)$ is doubly-periodic with period $2K$ and $4iK'$.
Therefore the parameter $u$ of the massive exponential function
$\expo_{(x,y)}(u\vert k)$ defined in \eqref{eq:recursive_def_expo} can be seen as living on the torus $\CC / (4K\ZZ+8iK'\ZZ)$.
However, on this torus, the function $u\mapsto\sc(u_\alpha\vert k)$ satisfies
$\sc((u+4iK')_\alpha\vert k)=\sc(u_\alpha+2iK'\vert k)=-\sc(u_\alpha\vert k)$.

If both vertices $x$ and $y$ belong to $\Gs$,
the number of $\sc$ factors in the definition of
$\expo_{(x,y)}(u\vert k)$ is even, implying that  
$\expo_{(x,y)}(\cdot\vert k)$ is an elliptic function with period $4K$ and $4iK'$. 
In the following, when working with the massive exponential function restricted to pairs of vertices of $\Gs$,
we suppose that the parameter $u$
belongs to the torus $\TT(k):= \CC / (4K\ZZ+4iK'\ZZ)$.

\subsubsection{Massive exponential functions are massive harmonic functions}

The next proposition proves the key property of the discrete massive exponential function, \emph{i.e.}, that it defines a family of massive harmonic functions. 

\begin{prop}
  \label{prop:exp_harmo}
  For every $u\in \TT(k)$, the massive exponential function $\expo_{(x,y)}(u\vert k)$ is
  massive harmonic on $\Gs$ in each variable $x$ and $y$. Namely,
  \begin{equation*}
    \forall\,x\in\Vs,\quad
    \Delta^{m(k)} \expo_{(\cdot,x)}(u\vert k) = 
    \Delta^{m(k)} \expo_{(x,\cdot)}(u\vert k) =
    0.
  \end{equation*}
\end{prop}

\begin{proof}
Let $y$ be a vertex of $\Gs$.  Since $\Delta^m$ is symmetric and $\expo_{(x,y)}(u+2K)=\expo_{(y,x)}(u)$, it
is enough to prove that for every vertex $x$ of $\Gs$, 
$(\Delta^m \expo_{(\cdot,y)}(u))(x)=0$. Suppose that $x$ has degree $n$ and denote by $x_1,\dotsc,x_n$ the vertices incident to $x$, by
$e_1,\dotsc,e_n$ and $\theta_1,\dotsc,\theta_n$, the corresponding edges and rhombus angles, see Figure~\ref{fig:notation}. By definition of the massive
exponential function, we have $\expo_{(x_j,y)}(u)=\expo_{(x_j,x)}(u)\expo_{(x,y)}(u)$. As a consequence, using~\eqref{equ:operator},
\begin{align*}
(\Delta^m\,\expo_{(\cdot,y)}(u))(x)
=\Biggl(\sum_{j=1}^n [\Arm(\theta_j)-\sc(\theta_j)\expo_{(x_j,x)}(u)]\Biggr)\expo_{(x,y)}(u).
\end{align*}
It thus suffices to prove that the prefactor
\begin{equation}\label{equ:sumzero}
\sum_{j=1}^n [\Arm(\theta_j)-\sc(\theta_j)\expo_{(x_j,x)}(u)]=0.
\end{equation}
Replacing the exponential function by its definition, and referring to Figure~\ref{fig:notation} for the notation of the rhombus vectors, we have
for every $j$
\begin{align}
\Arm(\theta_j)-\sc(\theta_j)\expo_{(x_j,x)}(u)&=
\Arm\Bigl(\frac{\alpha_{j+1}-\alpha_{j}}{2}\Bigr)+
k'\sc(\theta_j)\sc(u_{\alpha_j+2K})\sc(u_{\alpha_{j+1}+2K})\nonumber\\
&=\Arm(u_{\alpha_j+2K})-\Arm(u_{\alpha_{j+1}+2K}),
\text{by \eqref{cor:Armbis:item3} of Lemma~\ref{cor:Armbis}}.
\label{eq:threeleg}
\end{align}
By Section~\ref{sec:diamondetc}, the angles $\alpha_j$, $\alpha_{j+1}$ are such that
$\frac{\alpha_{j+1}-\alpha_{j}}{2}=\theta_j$. This implies that
\[
    (\alpha_{n+1}+2K)-(\alpha_{1}+2K)
      = \sum_{j=1}^{n}\alpha_{j+1}-\alpha_j
      = 2\sum_{j=1}^{n}\theta_j 
      = 4 K.
\]
We thus have, $u_{\alpha_{1}+2K}=u_{\alpha_{n+1}+2K}+2K$. Summing over $j$ we obtain:
\begin{align*}
\sum\limits_{j=1}^n [\Arm(\theta_j)-\sc(\theta_j)\expo_{(x_j,x)}(u)]&=
\sum\limits_{j=1}^n[\Arm(u_{\alpha_j+2K})-\Arm(u_{\alpha_{j+1}+2K})]\\
&=\Arm(u_{\alpha_{n+1}+2K}+2K)-\Arm(u_{\alpha_{n+1}+2K})=0,
\end{align*}
where in the last equality we have used Equation~\eqref{cor:Armbis:item1} of Lemma~\ref{cor:Armbis} in
Appendix~\ref{app:ellipticAH}.
\end{proof}

\section{Massive Green function on isoradial graphs}\label{sec:massiveGreen}

In the whole of this section, we let $\Gs$ be an infinite isoradial graph, and
fix an elliptic modulus $k\in(0,1)$.
We consider the inverse $G^{m(k)}$ of the massive Laplacian operator
$\Delta^{m(k)}$, that is the \emph{massive Green function}, whose definition we recall
in Section~\ref{def:green}. In Theorem~\ref{thm:expression_Green} of
Section~\ref{sec:localformula}, we prove an \emph{explicit local formula} for the 
massive Green function.
Then, in Theorem~\ref{thm:asymp_Green} of
Section~\ref{sec:asymp_Green}, using a saddle-point analysis, we prove explicit asymptotic exponential decay of the Green function.
%

\subsection{Definition}\label{def:green}


The space of functions on $\Vs$ with finite support is endowed with a natural
scalar product: $\langle f, g\rangle = \sum_{x\in\Vs}\overline{f(x)} g(x),$
which can be completed into the Hilbert space $L^2(\Vs)$.

The operator $(\Delta^{m(k)})$, defines a symmetric
bilinear form on $L^2(\Vs)$, called the \emph{energy form} or \emph{Dirichlet
form} $\mathcal{E}(\cdot,\cdot\vert k)$:
\begin{equation*}
  \mathcal{E}(f,g\vert k)= \frac{1}{2} \langle f,(\Delta^{m(k)}) g\rangle =
  \frac{1}{2}\sum_{x\in\Vs} m^2(x\vert k) \overline{f(x)} g(x) +
  \sum_{y \sim x} \rho(\theta_{xy}\vert k)\overline{(f(x)-f(y))} (g(x)-g(y)).
\end{equation*}
Note that the condition imposed on rhombus half-angles, namely that they are 
in $(\eps,\frac{\pi}{2}-\eps)$ for some $\eps>0$, implies
that the degree of vertices is uniformly bounded, and that conductances $(\rho(\theta_{e}))$ are uniformly bounded away from 0 and 
infinity. Moreover, since $k>0$, masses $(m^2(x))$ are also uniformly bounded. Therefore, there exist two constants
$c, C>0$ such that for all $f\in L^2(\Vs)$,
\begin{equation*}
  c\langle f, f \rangle \leq 
  \langle f, (\Delta^{m(k)}) f\rangle \leq 
  C \langle f, f\rangle.
\end{equation*}


As a consequence, the inverse of $\Delta^{m(k)}$, called the \emph{massive Green function} and
denoted by $G^{m(k)}$, is
well defined, and can be expressed from the semigroup
$(e^{-t\Delta^{m(k)}})_{t\geq 0}$ as:
\begin{equation*}
  G^{m(k)}=\int_{0}^{\infty}e^{-t\Delta^{m(k)}}\ud t.
\end{equation*}

For every $f\in L^2(\Vs)$, $G^{m(k)}f\in L^2(\Vs)$, and $G^{m(k)}$ is uniquely
characterized by the fact that for any functions $f,g$ in
$L^2(\Vs)$, $\mathcal{E}(G^{m(k)}f,g\vert k)=\langle f, g\rangle.$

Like the massive Laplacian, the massive Green function can be seen as an infinite
symmetric matrix with rows and columns indexed by vertices of $\Gs$ as follows:
\begin{equation*}
  \forall\, x,y\in\Vs,\quad
  G^{m(k)}(x,y) = (G^{m(k)}\delta_y)(x).
\end{equation*}

Note that for any vertex $y$ of $\Gs$, $x\mapsto G^{m(k)}(x,y)$ belongs to
$L^2(\Vs)$. In particular,
\begin{equation*}
  \lim_{x\to\infty}G^{m(k)}(x,y)=0.
\end{equation*}


\subsection{Local formula for the massive Green function}\label{sec:localformula}

Theorem~\ref{thm:expression_Green} proves 
an explicit formula for the massive Green function
$G^{m(k)}$. Notable features of this theorem are explained in the introduction and briefly recalled in Remark~\ref{rem:critical_Green}.

\begin{thm}
\label{thm:expression_Green}
Let $\Gs$ be an infinite isoradial graph.
Then, for every pair of vertices $x$, $y$ of $\Gs$, the massive Green function
$G^{m(k)}(x,y)$ has the following explicit expression:
  \begin{equation}
  \label{eq:green}
  G^{m(k)}(x,y) =\frac{k'}{4i\pi} \int_{\Cs_{x,y}} \expo_{(x,y)}(u\vert k) \ud u,
  \end{equation}
  where the contour of integration $\Cs_{x,y}$ is the vertical closed
  path $\varphi_{x,y}+[0,4iK'(k)]$ on
  $\TT(k)$, winding once vertically and directed upwards, and
  $\overline{\varphi}_{x,y}=\frac{\pi}{2K}\varphi_{x,y}$ is the angle of the ray $\RR \overrightarrow{xy}$, see Figure~\ref{The_rectangle}.
  
  Alternatively, the massive Green function $G^{m(k)}(x,y)$ can be expressed as
\begin{equation}
\label{eq:greenbis}
    G^{m(k)}(x,y)=\frac{k'}{4i\pi} \oint_{\gamma_{x,y}} H(u|k) \expo_{(x,y)}(u\vert k) \ud u,
\end{equation}
where the function $H$ is defined in Equation \eqref{def:H}, $\gamma_{x,y}$ is a trivial contour on the torus, not crossing $\Cs_{x,y}$
and containing in its interior all the poles of $\expo_{(x,y)}(\cdot\vert k)$
and the pole of $H(\cdot\vert k)$, see Figure~\ref{The_rectangle}. 
\end{thm}


\unitlength=0.6cm
\begin{figure}[h]
\vspace{40mm}
\begin{center}
\begin{tabular}{cccc}
\begin{picture}(0,0)(0,0)
\put(-2,0){\line(1,0){4}}
\put(2,0){\line(1,0){4}}
\put(-6,0){\line(1,0){4}}
\put(-6,3){\line(1,0){12}}
\put(-6,0){\line(0,1){3}}
\put(6,0){\line(0,1){3}}
\put(-2,6){\line(1,0){4}}
\put(-3.6,0){\vector(0,1){1.5}}
\put(-3.6,1.5){\vector(0,1){1.5}}
\put(-3.6,3){\vector(0,1){1.5}}
\put(-3.6,4.5){\vector(0,1){1.5}}
\put(2,6){\line(1,0){4}}
\put(-6,6){\line(1,0){4}}
\put(-6,3){\line(0,1){3}}
\put(6,3){\line(0,1){3}}
\put(-6.15,-0.15){{$\bullet$}}
\put(-5.55,-0.15){{$\bullet$}}
\put(-5.55,5.85){{$\bullet$}}
\put(-6.15,2.85){{$\bullet$}}
\put(-0.2,-0.2){{$\blacksquare$}}
\put(-0.2,5.8){{$\blacksquare$}}
\put(-6.15,5.85){{$\bullet$}}
\put(5.85,-0.15){{$\bullet$}}
\put(5.85,5.85){{$\bullet$}}
\put(-7.7,2.9){{$-2K$}}
\put(-9.6,-0.8){{$-2K-2iK'$}}
\put(-5.6,-0.8){{$u_0$}}
\put(-5.6,6.4){{$u_0$}}
\put(-3.75,5.85){{$\bullet$}}
\put(-3.75,-0.15){{$\bullet$}}
\put(-4.0,-0.8){{$\varphi_{x,y}$}}
\put(-4.0,6.4){{$\varphi_{x,y}$}}
\put(-9.6,6.4){{$-2K+2iK'$}}
\put(6.1,6.4){{$2K+2iK'$}}
\put(6.1,-0.8){{$2K-2iK'$}}
\put(-1,-0.8){{$-2iK'$}}
\put(-5.23,4.5){\vector(-1,3){0}}
\put(-5.23,1.5){\vector(1,3){0}}
\put(-3.5,0.4){$\Cs_{x,y}$}
\put(-1.9,0.4){$\gamma_{x,y}$}
\put(-0.15,2.87){{\tiny$\square$}}
\put(-0.65,2.87){{\tiny$\square$}}
\put(-1.35,2.87){{\tiny$\square$}}
\put(0.15,2.87){{\tiny$\square$}}
\put(0.55,2.87){{\tiny$\square$}}
\put(1.05,2.87){{\tiny$\square$}}
\put(-0,4){\vector(-1,0){0}}
\qbezier(-2,3)(-2,4)(0,4)
\qbezier(0,4)(2,4)(2,3)
\put(-2,3){\vector(0,-1){1.5}}
\put(-2,1.5){\vector(0,-1){1.5}}
\put(2,0){\vector(0,1){1.5}}
\put(2,1.5){\vector(0,1){1.5}}
\qbezier(-2,6)(-2,5)(0,5)
\qbezier(2,6)(2,5)(0,5)
\qbezier(-4.8,3.12)(-5.4,4.5)(-5.4,6)
\qbezier(-4.8,2.88)(-5.4,1.5)(-5.4,0)
\put(-5.25,0.4){$\Cs'_{x,y}$}
\put(0,5){\vector(1,0){0}}
\put(5.85,2.85){$\circ$}
\put(5.35,2.85){{$\circ$}}
\put(4.65,2.85){{$\circ$}}
\put(-5.85,2.85){{$\circ$}}
\put(-5.45,2.85){{$\circ$}}
\put(-4.95,2.85){{$\circ$}}
\put(-2,6.5){\vector(1,0){4}}
\put(2,6.5){\vector(-1,0){4}}
\put(-1.8,6.8){\text{length $<2K$}}
\end{picture}
\end{tabular}
\end{center}
\vspace{5mm}
\caption{The fundamental rectangle $[-2K,2K]+i[-2K',2K']$ with a representation of the integration contours 
$\Cs_{x,y}$ and $\gamma_{x,y}$ of the Green function in~\eqref{eq:green} and~\eqref{eq:greenbis}, the poles of the 
exponential function $\expo_{(x,y)}(\cdot\vert k)$ (white squares), the zeros of $\expo_{(x,y)}(\cdot\vert k)$ (white bullets), 
the pole of the function $H(\cdot\vert k)$ in~\eqref{eq:greenbis} (black square), the saddle point $u_0$ and the steepest descent contour 
$\Cs'_{x,y}$ used in the proof of Theorem~\ref{thm:asymp_Green}.}
\label{The_rectangle}
\end{figure}
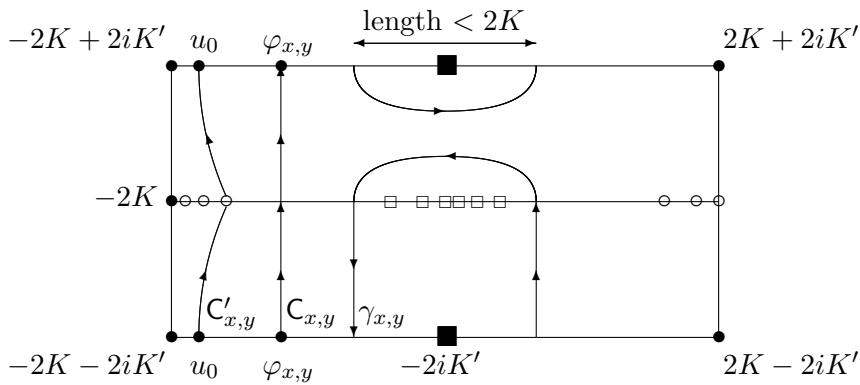

\begin{rem} $\,$ \label{rem:critical_Green}
\begin{itemize}
\item Formula \eqref{eq:green} has the remarkable feature of being \emph{local}, meaning that the 
Green function $G^{m(k)}(x,y)$ is computed using geometric information of a path from $x$ to $y$ only. This feature is inherited
from the massive exponential function, see Definition~\ref{def:massiveexpo}. Note also, that there is no periodicity 
assumption on the graph $\Gs$, and that explicit computations can be performed using the residue theorem, see Formulas \eqref{equ:explicit1},
\eqref{equ:explicit2} and \eqref{equ:explicit3}. More details and a description of the context, in particular the papers 
\cite{Kenyon3} and \cite{BoutillierdeTiliere:iso_gen}, can be found in the introduction.
\item
In the limiting case $k\to0$, the torus becomes an infinite cylinder ($K\to\pi/2$
and $K'\to\infty$), the contour of integration $\gamma_{x,y}$ becomes an infinite
(vertical) straight line, and one has $H(u)\to u/(2\pi)$ thanks to
Lemma~\ref{lem:h} of Appendix~\ref{app:ellipticAH}. In this way, we formally 
obtain the following expression for the massless Green function:
\begin{equation*}
     G^{m(0)}(x,y) = \frac{1}{8i\pi^2}\int_{\gamma_{x,y}} u \expo_{(x,y)}(u\vert 0) \ud u.
\end{equation*}
This expression, after the change of variable $z=-e^{iu}$, is exactly the one
given by Kenyon in~\cite[Theorem 7.1]{Kenyon3}.
Strictly speaking,
the limit of~\eqref{eq:green} when $k$ goes to $0$ is infinite, which can be
expected, since when $k=0$, the mass vanishes and the corresponding random walk
is recurrent.
However, if the diagonal is subtracted, one can take the limit, make sense
of the change of variable, and recover Kenyon's expression.

\item Note that
we can add to $H$ in \eqref{eq:greenbis} any elliptic function $f$ on $\TT(k)$ without changing the
result. Indeed, the sum of residues of $f\expo_{(x,y)}$ on $\TT(k)$ is equal to zero.
\end{itemize}
\end{rem}

\begin{proof}
Let us first prove the equality between expressions \eqref{eq:green} and \eqref{eq:greenbis}.
The function $H$ is multivalued because of a horizontal period.
By Lemma~\ref{lem:h} of Appendix~\ref{app:ellipticAH}, a determination of $H$ on $\TT(k)\setminus
\Cs_{x,y}$ is meromorphic on this domain, it has a single pole at $2iK'$,
and the jump across
$\Cs_{x,y}$ is constant and equal to $1$.
We start from expression \eqref{eq:greenbis}. On the torus
$\TT(k)$ deprived of $\Cs_{x,y}$ and from the poles of $H\expo_{(x,y)}$,
$\gamma_{x,y}$ is homologically equivalent to two vertical contours, one on each side
of $\Cs_{x,y}$, with different orientations. The sum of the integrals of
$H\expo_{(x,y)}$ on these two vertical contours is equal to the integral along
$\Cs_{x,y}$ of the jump of $H\expo_{(x,y)}$ across $\Cs_{x,y}$, which is
equal to $\expo_{(x,y)}$. We thus obtain expression \eqref{eq:green}.

The vertex $y$ is considered fixed. Denote by $f(x)$ the common value of the
right hand side of \eqref{eq:green} and \eqref{eq:greenbis}.
Using the idea of the argument of \cite{Kenyon3}, we now prove that $f(x)$ is the
Green function $G^m(x,y)$. 
We first show that $(\Delta^m f)(x)=\delta_y(x)$. The argument is separated
into two cases.

\paragraph{Case $x\neq y$.} Denote by $e^{i\overline{\alpha}_1},\dotsc,e^{i\overline{\alpha}_n}$ 
the unit vectors coding the edges of $\GR$ around $x$, and by $x_1,\dotsc,x_n$ the
neighbors of $x$ in $\Gs$ listed counterclockwise, such that
$x_j=x+e^{i\overline{\alpha}_j}+e^{i\overline{\alpha}_{j+1}}$. For definiteness, we chose
$e^{i\overline{\alpha}_1}$ to be the first vector when going counterclockwise around
$x$, starting from the segment $[y,x]$, and we have
$\overline{\alpha}_{j+1}=\overline{\alpha}_j+2\overline{\theta}_j$, where
$\overline{\theta}_j\in(\eps,\pi/2-\eps)$ is the
the rhombus half-angle of the edge $xx_j$.
      
The poles of the function $\expo_{(x,y)}(u)$ are encoded by the steps of a
minimal path from $x$ to $y$. If the steps of the minimal path are
$\{e^{i\overline{\alpha}_\ell}\}$, the poles are $\{\alpha_\ell+2K\}$, see Remark~\ref{rem:encode} and Figure~\ref{The_rectangle}.
According to~\cite[Lemma~17]{BoutillierdeTiliere:iso_gen}, the steps are
contained in a sector of angle not larger than $\pi$, avoiding the half-line
$\mathbb{R}^+ \overrightarrow{yx}$. As a consequence, the poles $\{\alpha_\ell+2K\}$ can be chosen
in an interval of length not larger than $2K$ and not touching the contour
$\Cs_{x,y}$ used for the integration in \eqref{eq:green} (see
again Figure~\ref{The_rectangle}). 
The contour $\Cs_{x,y}$
can be moved to the
left or to the right as long as it does not cross any of these poles.
      
In the function $\expo_{(x_j,y)}(u)=\expo_{(x_j,x)}(u)\expo_{(x,y)}(u)$, we have (at most) a subset of the poles of
$\expo_{(x,y)}(u)$ (since one of
the poles of $\expo_{(x,y)}(u)$ can be canceled by a zero of $\expo_{(x_j,x)}(u)$), plus those associated to 
$\expo_{(x_j,x)}(u)$, which are $\alpha_j,\alpha_{j+1}$.
The whole set of poles $\{\alpha_\ell+2K,\alpha_j\}$ avoids a sector
to which we can move all the contours $\Cs_{x,y}$ and
$\Cs_{x_1,y},\dotsc,\Cs_{x_n,y}$, without crossing any pole, and thus use
the same contour of integration $\Cs$ for $f(x)$ and $f(x_j)$.
By linearity of the integral, we thus have
\begin{equation*}
  (\Delta^m f)(x)
  = \left(\frac{k'}{4i\pi}\Delta^m \int_{\Cs}\exp_{(\cdot,y)}(u)\ud
  u\right)(x) = \frac{k'}{4i\pi}\int_{\Cs}[\Delta^m
  \exp_{(\cdot,y)}(u)](x)\ud u.
\end{equation*}
By Proposition~\ref{prop:exp_harmo}, the term in square brackets on the
right-hand side is zero, and we conclude that $f$
is massive harmonic outside of $y$.
\paragraph{Case $x=y$.}
By definition of the massive Laplacian, we have
\begin{equation*}
  (\Delta^m f)(x)
  =d(x) f(x)-\sum_{j=1}^{n} \rho(\theta_j) f(x_j).
\end{equation*}
The values of $f$ at $x$ and its neighbors $x_j$ are obtained by a direct
computation of the integral defining $f$ with the residue theorem,
explicited in Lemma~\ref{lem:Gneighbor} of Appendix~\ref{app:comput_green}:
\begin{equation*}
  f(x_j) = H(\alpha_{j})-H(\alpha_{j+1}) +
  \frac{k' K'}{\pi}\expo_{(x_j,x)}(2iK'),
  \qquad 
  f(x) = \frac{k'K'}{\pi},
\end{equation*}
with the convention that $\alpha_{n+1} = \alpha_{1}+4K$.
By Equation~\eqref{equ:sumzero},
\begin{equation*}\frac{k' K'}{\pi}\Biggl(d(x)-\sum_{j=1}^{n}
  \rho(\theta_j)\expo_{(x_j,x)}(2iK')\Biggr)=0,
\end{equation*}
so that the remaining terms are
\begin{equation*}
(\Delta^m f)(x)=
\sum_{j=1}^n H(\alpha_{j+1})-H(\alpha_{j}) = 
H(\alpha_1 + 4K)-H(\alpha_1)=1,
\end{equation*}
where in the last equality, we used the first point of Lemma~\ref{lem:h}.

In the forthcoming Proposition~\ref{thm:exp_bounded_Green}, we prove that $f(x)$ decays
(exponentially fast) to zero. Since $G^m(x,y)$ also goes to zero when $x$ goes
to infinity, and has the same massive Laplacian as $f$, the difference
$G^{m}(\cdot,y)-f$ tends to zero at infinity and is harmonic: by the maximum
principle,
$f$ has to be equal to $G^{m}(\cdot,y)$.
\end{proof}

\paragraph{Examples.}
Formula~\eqref{eq:greenbis} of Theorem~\ref{thm:expression_Green} allows for
explicit computations using the residue theorem.
We now list a few special values. Details are given in Lemma~\ref{lem:Gneighbor} of
Appendix~\ref{app:comput_green}. 
\begin{itemize}
 \item For every vertex $x$ of $\Gs$,
\begin{equation}\label{equ:explicit1}
  G^{m(k)}(x,x) = \frac{k'K'}{\pi}.
\end{equation}
Note that this value does not depend on $x$, and is a function of $k$ only.
\item Let $x,y$ be two adjacent vertices of $\Gs$, and let $\theta$ be the rhombus half-angle of the edge $xy$, then
\begin{equation}\label{equ:explicit2}
  G^{m(k)}(x,y)=\frac{K'\dn(\theta)}{\pi}-\frac{H(2\theta)}{\sc(\theta)}.
\end{equation}
\item In the limit $k\to 0$, 
\begin{equation}\label{equ:explicit3}
  \lim_{k\to 0} \,(G^{m(k)}(x,x)-G^{m(k)}(x,y)) = \frac{\theta}{\pi\tan\theta},
\end{equation}
which is the value obtained by Kenyon~\cite[Section 7.2]{Kenyon3} in the critical case.
\end{itemize}

%
%

\subsection{Asymptotics of the Green function}\label{sec:asymp_Green}

In this section, we suppose that the graph $\Gs$ is quasicrystalline and compute asymptotics of the Green function
$G^{m(k)}(x,y)$ when the graph distance in $\GR$ between $x$ and $y$ is large.

Under the quasicrystalline assumption, the number of directions $\pm e^{i\overline{\alpha}}$
assigned to edges of the diamond graph $\GR$ is finite, and $\GR$ can be seen as the projection of a monotone surface in $\ZZ^\ell$, see Section~\ref{def:quasi}.
The distance between two vertices $x$ and $y$ of
$\Gs$, measured as the length of a minimal path, is thus the
graph distance between $x$ and $y$ seen as vertices of $\GR$. It is also the graph distance in $\ZZ^\ell$ between the
corresponding points on the monotone surface, and we denote it by $\vert x-y\vert $,
where $x-y\in\ZZ^\ell$ is the vector between the points on the surface.



In order to state Theorem~\ref{thm:asymp_Green}, we need the following notation.
By \cite[Lemma 17]{BoutillierdeTiliere:iso_gen},
the set $\{\alpha_1,\dotsc, \alpha_p\}$ of zeros of $\expo_{(x,y)}(u)$ is contained in an
interval of length $2K-2\eps$, for some $\eps>0$. Let us denote by
$\alpha$ the midpoint of this interval. We also need the function $\chi$ defined by
\begin{equation*}
\chi(u)=\frac{1}{\vert x-y\vert }\log\{\expo_{(x,y)}(u+2iK') \},
\end{equation*}
which is analytic in the cylinder $\mathbb R/(4K\mathbb Z)+(-2iK',2iK')$.

\begin{thm}
\label{thm:asymp_Green}
Let $\Gs$ be a quasicrystalline isoradial graph. When the distance $\vert x-y\vert $ between vertices $x$ and $y$ of $\Gs$ is large,
we have
\begin{equation}
\label{eq:alt_asymp_green}
G^{m(k)}(x,y)=\frac{k'}{2\sqrt{2\pi\vert x-y\vert \chi''(u_0\vert k)}}e^{\vert x-y\vert \chi(u_0\vert k)}\cdot(1+o(1)),
\end{equation}
where $u_0$ is the unique 
$u\in\alpha+(-K+\eps,K-\eps)$ such that $\chi'(u\vert k)=0$, and $\chi(u_0\vert k)<0$.
\end{thm}

For periodic isoradial graphs, a geometric 
interpretation of $u_0$ is provided in Section~\ref{sec:recovAsympt}.

The proof consists in applying the saddle-point method to the 
contour integral expression~\eqref{eq:green} given in
Theorem~\ref{thm:expression_Green}. 
Note that the approach is different from \cite{Kenyon3}, where the author
obtains asymptotics of the Green function by the Laplace method, as there
are no saddle-points in the critical case. 
The proof of Theorem~\ref{thm:asymp_Green} is split as follows. In Lemma~\ref{lem:characterization_saddle_point},
we first show that there is a unique $u\in\alpha+(-K+\eps,K-\eps)$
such that $\chi'(u\vert k)=0$. 
Then, in Lemma~\ref{lem:conv0} we prove that $\chi(u_0\vert k)<0$, implying exponential decay of the Green function. 
Finally, we conclude the proof of Theorem~\ref{thm:asymp_Green}. Note that Lemmas~\ref{lem:characterization_saddle_point}
and~\ref{lem:conv0} do not use the quasicrystalline assumption.

Let us introduce some notation. Denote by $N_j$ the number of times a step $e^{i\overline{\alpha}_j}$ is taken in
a minimal path from $x$ to $y$, so that $N_1+\dotsb +N_p=\vert x-y\vert $. 
Using Equation \eqref{eq:recursive_def_expo}, one has
\begin{equation*}
     \expo_{(x,y)}(u)=\left\{i\sqrt{k'}\sc\left(\frac{u-\alpha_1}{2}\right)\right\}^{N_1} \times \dotsb \times\left\{i\sqrt{k'}\sc \left(\frac{u-\alpha_p}{2}\right)\right\}^{N_p}.
\end{equation*}
 With
$n_j=N_j/\vert x-y\vert$, the function $\chi$ is equal to
\begin{equation}
     \chi(u)= n_1\log \left\{\sqrt{k'}\nd\left(\frac{u-\alpha_1}{2}\right)\right\}+
     \dotsb 
     + n_p \log\left\{\sqrt{k'}\nd\left(\frac{u-\alpha_p}{2}\right)\right\},
     \label{eq:def_chi}
\end{equation}
where we used \eqref{id:sciKp} to simplify
$\expo_{(x,y)}(u+2iK')$. Because of the logarithm, $\chi$ 
is not meromorphic on $\TT(k)$, but is meromorphic (and even analytic) 
in the cylinder $\mathbb R/(4K\mathbb Z)+(-2iK',2iK')$.

\begin{lem}
\label{lem:characterization_saddle_point}
There is a unique $u_0$ in
$\alpha+(-K+\eps,K-\eps)$ such that $\chi'(u\vert k)=0$.
\end{lem}


\begin{proof}
Rotating $\Gs$, we assume that $\alpha=0$. 
Using \cite[2.5.8]{La89}, the equation of Lemma~\ref{lem:characterization_saddle_point} is equivalent to:
\begin{equation}
\label{eq:sum_dnsncn}
     n_1 \frac{\sn \cdot \cn}{\dn}\left(\frac{u-\alpha_1}{2}\right)
     + \dotsb +
     n_p \frac{\sn \cdot \cn}{\dn}\left(\frac{u-\alpha_p}{2}\right)=0.
\end{equation}
The above function is meromorphic on the torus $\TT(k)$. By 
Landen's ascending transformation (see \eqref{eq:alt} of Appendix~\ref{app:elliptic}), 
Equation~\eqref{eq:sum_dnsncn} can be rewritten in a simpler way, as follows:
\begin{equation}
\label{eq:to_solve}
     n_1\sn (v-\gamma_1\vert \ell)+\dotsb +n_p\sn (v-\gamma_p\vert \ell)=0,
\end{equation}
where $\ell$ and $\mu$ are defined in \eqref{eq:alt}, and we have noted $v=\frac{(1+\mu)u}{2}$ and $\gamma_i=\frac{(1+\mu)\alpha_i}{2}$.

Using the relation~\eqref{eq:K_Kp_after_Landen} between $K,K'$ and $K(\ell),K'(\ell)$, and the identity $(1+\mu)(1+\ell)=2$, 
under the change of variable, the torus
$\TT(k)$ becomes $\widetilde{\TT}(\ell)=\mathbb C/(4K(\ell)\mathbb Z+2iK'(\ell)\mathbb Z)$,
and the $\gamma_i$'s are in $(-K(\ell)+\widetilde\eps,K(\ell)-\widetilde\eps\,)$. Hereafter we shall replace $\widetilde\eps$ by $\eps$.


Let $f$ be the function $f(v)=2(1+\mu)\chi'(\frac{2v}{1+\mu})$ 
defined on the left-hand side of Equation~\eqref{eq:to_solve}.
We now show that $f$ has a unique zero
on the interval $(-K(\ell)+\eps,K(\ell)-\eps)$.

First notice that in the degenerate case $\ell=0$ this is obvious: the
addition formula for the sine function gives the unique solution
$v=\arctan\Bigl(\frac{\sum_{j=1}^pn_j\sin(\gamma_j)}{\sum_{j=1}^pn_j\cos(\gamma_j)}\Bigr)\in(-\pi/2,\pi/2)$.
In the other degenerate case $\ell=1$, $\sn $ becomes the hyperbolic tangent
function, and~\eqref{eq:to_solve} is a sum of $p$ increasing functions on
$\mathbb R$, which obviously has a unique zero on $\mathbb R$. 
The situation is more complicated in the remaining cases $\ell\in(0,1)$, where we show that:
\begin{enumerate}
     \item\label{item:poles1} $f$ has $2p$ simple poles in $\widetilde{\TT}(\ell)$
       (and thus also $2p$ zeros in $\widetilde{\TT}(\ell)$, counted with~multiplicities);
     \item\label{item:poles2} $f$ has at least one zero in the interval
       $(-K(\ell)+\eps,K(\ell)-\eps)\subset \widetilde{\TT}(\ell)$, and at least one zero in $(K(\ell)+\eps,3K(\ell)-\eps)\subset \widetilde{\TT}(\ell)$;
     \item\label{item:poles3} $f$ has at least $2p-2$ zeros on
       $-iK'(\ell)+\mathbb R/(4K(\ell)\mathbb Z)$.
\end{enumerate}
From~\ref{item:poles1},~\ref{item:poles2} and~\ref{item:poles3} it immediately follows that the zero of~\eqref{eq:to_solve} 
on $(-K(\ell)+\eps,K(\ell)-\eps)$ is unique. 

Point~\ref{item:poles1} is clear: each function $n_i\sn(v-\gamma_i\vert \ell)$ has two simple poles, at points congruent to $\gamma_i-iK'(\ell)$ and $\gamma_i-iK'(\ell)+2K(\ell)$. 
The poles cannot compensate for different values of $i$, since the $\gamma_i$'s are in an interval whose length is strictly less than $2K(\ell)$.

The intermediate value theorem yields Point~\ref{item:poles2}. At $v=-K(\ell)+\eps$ (resp.\ $v=K(\ell)-\eps$),
each $\sn(v-\gamma_i\vert \ell)$ is negative (resp.\ positive). 
We thus have at least one solution in $(-K(\ell)+\eps,K(\ell)-\eps)$. 
Since $\sn(v+2K(\ell)\vert \ell)=-\sn(v\vert \ell)$, the same holds in the interval $(K(\ell)+\eps,3K(\ell)-\eps)$.

We now prove Point~\ref{item:poles3}. In an interval of the form $-iK'(\ell)
+ [\gamma_i,\gamma_j]$, where $\gamma_i$ and $\gamma_j$ are consecutive, the
function $f$ has at least one zero. Indeed, evaluating $f$ at $-iK'+v$ and using
the addition formula~\eqref{id:sniKp} for $\sn$ by a quarter-period, we obtain
\begin{equation*}
     \frac{1}{\ell}\left(
      \frac{n_1}{\sn(v-\gamma_1\vert \ell)}+\dotsb
     +\frac{n_p}{\sn(v-\gamma_p\vert \ell)}
     \right).
\end{equation*}
As $v\to\gamma_i+0$ (resp.\ $v\to\gamma_j-0$)
the above function goes to $+\infty$ (resp.\ $-\infty$).
We conclude with the intermediate value theorem. We thus obtain $p-1$ zeros. 
The same reasoning on $-iK'(\ell) + [\gamma_i+2K(\ell),\gamma_j+2K(\ell)]$ 
provides $p-1$ zeros. These zeros are mutually disjoint. 
\end{proof}


\begin{lem}
\label{lem:conv0}
One has the following inequality, implying exponential decay of the Green function,
\begin{equation*}
     \chi(u_0)=\min\{ \chi(u): {u\in\alpha+(-K+\eps,K-\eps)}\}\leq \log\{\sqrt{k'}\nd (\eps/2)\}<0.
\end{equation*}
\end{lem}

\begin{proof}
First recall from Lemma~\ref{lem:characterization_saddle_point} and its proof that, on the interval 
$\alpha+(-K+\eps,K-\eps)$, in the neighborhood of which $\chi$ is analytic,
$\chi'$ has a unique zero. It is negative at $\alpha-K+\eps$ and positive at $\alpha+K-\eps$. 
This implies that $\chi(u_0)$ is the minimum of $\chi$ on $\alpha+(-K+\eps,K-\eps)$. 


We now compute the value of $\chi(\alpha)$:
\begin{equation*}
     \chi(\alpha) = n_1\log \left\{\sqrt{k'}\nd \left(\frac{\alpha-\alpha_1}{2}\right)\right\}+ \dotsb +n_p\log \left\{\sqrt{k'}\nd \left(\frac{\alpha-\alpha_p}{2}\right)\right\}.
\end{equation*}
For $u\in(-K/2,K/2)$ one has $\nd (u)\in[1,1/\sqrt{k'})$, see~\cite[16.5.2]{AS}. Further, $\nd$ is decreasing (resp.\ increasing) on $(-K/2,0]$ (resp.\ $[0,K/2)$). This implies that each term above satisfies
\begin{equation*}
     n_j\log \left\{\sqrt{k'}\nd \left(\frac{\alpha-\alpha_j}{2}\right)\right\}\leq n_j\log\{\sqrt{k'}\nd (\eps/2)\}.\qedhere
\end{equation*}
\end{proof}

\begin{proof}[Proof of Theorem~\ref{thm:asymp_Green}]
  Starting from Equation~\eqref{eq:green} defining $G^m(x,y)$,
  performing the change of variable $u+ 2iK'\rightarrow u$  
  and using the definition of $\chi$, the Green
  function between $x$ and $y$ is rewritten as
  \begin{equation*}
    G^m(x,y)=\frac{k'}{4i\pi}\int_{\Cs_{x,y}} e^{\vert x-y\vert \chi(u)}\ud u,
  \end{equation*}
where $\Cs_{x,y}$ is the vertical closed loop defined in
Theorem~\ref{thm:expression_Green}, and is thus invariant by vertical
translation.
Our aim is to compute asymptotics of this integral
when $\vert x-y\vert $ is large. We use the saddle-point method
(for classical facts our
reference is~\cite[Chapter 8]{Cop}), with
some particularities coming from the fact the $n_j$'s involved in the function
$\chi$ depend on $\vert x-y\vert $ and do not necessarily converge 
as $\vert x-y\vert \to \infty$.
For this reason, we will typically have to apply the saddle-point method in a
uniform way. 

When the graph $\Gs$ is periodic, however, this is the classical saddle-point method.
Indeed, we can write
$\expo_{(x,y)}(u)=\expo_{(x,y')}(u)\expo_{(y',y)}(u)$, where $y'$ is
the point congruent to $y$ in the same fundamental domain as $x$. Then
the periodicity allows to write $\expo_{(y',y)}(u)$  as
$\expo_{(y',y'')}(u)^L$. We thus have to compute the asymptotics of
$\int_{\Cs_{x,y}}\expo_{(x,y')}(u)\expo_{(y',y'')}(u)^L\ud u$  for large values of
$L$, all functions in the integral being independent of $L$.
Although the periodic case could be considered more classically, and thus apart,
we choose to treat both the periodic and non-periodic cases at the same time. 
We move the contour $\Cs_{x,y}=\varphi_{x,y}+[-2iK',2iK']$ into a new one $\Cs'_{x,y}$, directed upwards, going through $u_0$ and satisfying some
further properties, to be specified now. See Figure \ref{The_rectangle} for a representation of the contours $\Cs_{x,y}$ and $\Cs'_{x,y}$.

\paragraph{Neighborhood of the saddle-point.}
Near $u_0$ we choose $\Cs'_{x,y}$ to be
$[u_0-i\eta,u_0+i\eta]$, where $\eta=\vert x-y\vert^{-\alpha}$, $\alpha>0$ being fixed later
on. Hereafter we write $\chi(u)=\chi(u_0)+\sum_{j=2}^\infty a_j(u-u_0)^j$, and define 
$
     F(u) = \chi(u)-\chi(u_0)-a_2(u-u_0)^2=\sum_{j=3}^\infty a_j(u-u_0)^j
$.
This function is analytic on a disc centered at $u_0$ and with some radius
$r$; $M$ denotes its maximum on the disc. Simple computations lead to the
upper bound (see~\cite[Equation (36.3)]{Cop} for full details)
\begin{equation}
\label{eq:ub_F} 
     \vert F(u)\vert \leq \frac{M\vert u-u_0\vert ^3}{r^2(r-\vert u-u_0\vert )}\leq \frac{2M}{r^3}\vert x-y\vert^{-3\alpha}\leq C\cdot \vert x-y\vert^{-3\alpha}.
\end{equation}
In the last inequality we can choose $C$ to be independent of $\vert x-y\vert$, thanks to the fact that $\chi$ depends continuously on the $n_j$'s.
With the above estimation one can write
\begin{align}
\label{eq:int_part1}
     \int_{[u_0-i\eta,u_0+i\eta]} e^{ \vert x-y\vert \chi(u)}\ud u
     =& e^{\vert x-y\vert\chi(u_0)}
     \int_{[u_0-i\eta,u_0+i\eta]} e^{\vert x-y\vert a_2(u-u_0)^2}\ud u 
     \cdot (1+O(\vert x-y\vert^{1-3\alpha}))\nonumber\\
     = & \frac{ie^{\vert x-y\vert\chi(u_0)}}{\sqrt{\vert x-y\vert a_2}}
     \int_{[-\sqrt{\vert x-y\vert a_2}\eta,\sqrt{\vert x-y\vert a_2}\eta]} e^{-t^2}  \ud t
     \cdot(1+O(\vert x-y\vert^{1-3\alpha}))\nonumber\\
     =&\frac{i\sqrt{\pi}e^{\vert x-y\vert\chi(u_0)}}{\sqrt{\vert x-y\vert a_2}}
     \cdot(1+O(\vert x-y\vert^{1-3\alpha}))
     \cdot(1+O(e^{-\vert x-y\vert a_2\eta^2})).
\end{align}
We now show that $a_2=\chi''(u_0)/2$ remains bounded away from $0$ independently 
of $\vert x-y\vert$. First, it comes from the analytic implicit function theorem that $u_0$ 
is an analytic function of the $n_j$'s. Accordingly, $a_2$ is positive and continuous  on $\{(n_1,\dotsc ,n_p): n_j\geq 0\text{ and } n_1+\dotsb +n_p=1\}$. Under the 
quasicrystalline hypothesis this set is compact, and thus $a_2$ can be bounded from  
below by its (positive) minimum. 
To conclude, we take any $1/3<\alpha<1/2$ to obtain that the
contribution of the neighborhood of $u_0$ to the integral gives
the result stated in Theorem~\ref{thm:asymp_Green}.

\medskip

{\bf Outside a neighborhood of the saddle-point.}
We prove that the rest of the integral does not contribute in the limit. 
For this, we show that $\Cs'_{x,y}$ can
be chosen such that: 
\begin{equation}
\label{eq:prop_cont}
     \forall u\in\Cs'_{x,y}\setminus [u_0-i\eta,u_0+i\eta],\quad \vert e^{\chi(u)}\vert \leq \vert e^{\chi(u_0\pm i\eta)}\vert .
\end{equation} 
Recall that the exponential function $\expo_{(x,y)}(u+2iK')$ has its zeros on the interval $\pm2iK'+\mathbb R/(4K\mathbb Z)$, see Figure~\ref{The_rectangle}. 
Consider the steepest
descent path starting from $u_0\pm i\eta$
 until it hits one of the zeros
of the exponential function
(note that obviously it cannot cross the line $[-2K, 2K] + 2iK'$ before).
The resulting contour $\Cs'_{x,y}$ is a deformation of $\Cs_{x,y}$ and is symmetric (the level lines and hence the steepest
descent paths of $e^{\chi(u)}$ are 
symmetric with respect to the horizontal axis, this comes from properties of the $\nd(\cdot)$ function, see~\cite[16.21.4]{AS}).


On $\Cs'_{x,y}\setminus [u_0-i\eta,u_0+i\eta]$, we have $\vert e^{\vert x-y\vert \chi(u)}\vert \leq e^{\vert x-y\vert\chi(u_0)}e^{-\vert x-y\vert a_2\eta^2}e^{C\vert x-y\vert ^{1-3\alpha}}$ by \eqref{eq:ub_F} and \eqref{eq:prop_cont}, which readily implies that
\begin{equation}
\label{eq:int_part2}
     \int_{\Cs'_{x,y}\setminus [u_0-i\eta,u_0+i\eta]} e^{\vert x-y\vert \chi(u)}\ud u=O(e^{\vert x-y\vert \chi(u_0)}e^{-\vert x-y\vert a_2\eta^2}e^{C\vert x-y\vert ^{1-3\alpha}}),
\end{equation}
since we can take the supremum of the lengths of $\Cs'_{x,y}$ bounded, because of the continuity of the level lines with respect to the parameters.

The integral~\eqref{eq:int_part2} is exponentially negligible with respect to
the integral~\eqref{eq:int_part1} on $[u_0-i\eta,u_0+i\eta]$. The proof of
Theorem~\ref{thm:asymp_Green} is complete.
\end{proof}

\begin{rem}
\label{rem:non_cryst} 
If there were an infinite number of directions $\alpha_j$ (non-quasicrystalline case), the Green function
would still exponentially decay to $0$, see
Proposition~\ref{thm:exp_bounded_Green} and Lemma~\ref{lem:conv0}. However, our
conjecture is that the asymptotic behavior is exactly the same as in
Theorem~\ref{thm:asymp_Green}. The technical issue is to prove that  
 $\chi''(u_0)$ remains bounded away from $0$ as $\vert x-y\vert$ becomes large. 
 From our analysis, we only know that the 
second derivative at $u_0$ is non-negative.
\end{rem}

\begin{prop}
\label{thm:exp_bounded_Green}
Let $\Gs$ be any infinite isoradial graph (not necessarily quasicrystalline).
When the distance $|x-y|$ between vertices $x$ and $y$ of $\Gs$ is large, we have
%
\begin{align*}
     G^{m(k)}(x,y)=O\bigl(e^{\vert x-y\vert \chi(u_0\vert k)}\bigr),
\end{align*}
where $u_0$ is the unique 
$u\in\alpha+(-K+\eps,K-\eps)$ such that $\chi'(u\vert k)=0$, and $\chi(u_0\vert k)<0$.
\end{prop}

\begin{proof}
The proof is the same as the one of Theorem~\ref{thm:asymp_Green}: there exists
a contour $\Cs'_{x,y}$ such that \eqref{eq:prop_cont} holds with $\eta=0$. In this
way, the upper bound of Proposition~\ref{thm:exp_bounded_Green} immediately
follows.
\end{proof}

\section{The case of periodic isoradial graphs}\label{sec:periodic_case}



In this section, we suppose that the isoradial graph $\Gs$ is
$\mathbb{Z}^2$-periodic, meaning that $\Gs$ is embedded in the plane so that it is invariant under 
translations of $\ZZ^2$. The massive Laplacian $\Delta^{m(k)}$ is a periodic
operator. It is described through its Fourier transform $\Delta^{m(k)}(z,w)$, which is
the massive Laplacian matrix of the toric graph $\Gs_1=\Gs/\ZZ^2$, with modified edge-weights on edges crossing a horizontal
and vertical cycle. Objects of interest are: the \emph{characteristic polynomial}, $P_{\Delta^{m(k)}}(z,w)$,
equal to the determinant of the matrix $\Delta^{m(k)}(z,w)$; the zero locus of this polynomial, known as the
spectral curve and denoted $\Ccal^k$, and its amoeba~$\A^k$.

In Section~\ref{sec:qperfunc}, we prove confinement results for the Newton polygon of the 
characteristic polynomial $P_{\Delta^{m(k)}}(z,w)$. In
Section~\ref{sec:spectral_amoeba}, we provide an explicit parametrization of the spectral curve $\Ccal^k$
by discrete massive exponential functions. This allows us to prove that $\Ccal^k$ is a curve of genus 1 
(Proposition~\ref{prop:geom_torus}),
and to recover the Newton polygon using the homology of the train-tracks only. In Theorem~\ref{thm:harnack}, we prove that
the curve $\Ccal^k$ is a Harnack curve. Furthermore, in Theorem~\ref{thm:Harnack2}, we prove that every genus~1, Harnack curve
with $(z,w)\leftrightarrow(z^{-1},w^{-1})$ symmetry arises from the massive Laplacian $\Delta^{m(k)}$ on some isoradial graph for some $k\in(0,1)$.

Using Fourier techniques, the massive Green function can be expressed using the characteristic polynomial. In Section~\ref{sec:perio_green},
we explain how to recover the local formula of Theorem~\ref{thm:expression_Green} for the massive Green function
(in the periodic case) from its Fourier expression. 
A priori, the two approaches
are completely different; it is an astonishing change of variable which works with our specific
choice of weights. Note that this relation was not understood in~\cite{Kenyon3,BoutillierdeTiliere:iso_gen,deTiliere:quadri}.
Then, we also explain how to recover asymptotics of the Green function obtained in Theorem~\ref{thm:asymp_Green}
from the double integral formula of Equation~\eqref{eq:periodic_Green}, using
analytic combinatorics techniques from~\cite{PeWi13}.


%


\subsection{Isoradial graphs on the torus and their train-tracks}\label{sec:traintrack_torus}

If $\Gs$ is a $\ZZ^2$-periodic isoradial graph, then the graph $\Gs_1=\Gs/\ZZ^2$
is an
isoradial graph embedded in the torus $\torus$. 
Let $\GR_1$ be the diamond graph of $\Gs_1$.
Properties of train-tracks of planar isoradial graphs discussed in
Section~\ref{sec:diamondetc} have to be adapted to the toroidal case, see also~\cite{KeSchlenk}.

We need the notion of \emph{intersection form} for closed paths on the torus $\torus$.
  Let $A$ and $B$ be two oriented closed paths on the torus $\torus$ having a finite number of intersections. Then 
  $A\wedge B$ denotes the algebraic number of intersections between $A$ and $B$, where
  an intersection is counted positively (resp.~negatively) if when following
  $A$, we see $B$ crossing from right to left (resp.~from left to right).
  In particular,
  \begin{equation*}
    \vert A\wedge B\vert  \leq \#\{\text{intersection points of $A$ and $B$}\}.
  \end{equation*}

  The quantity $A\wedge B$ only depends on the homology classes $[A]$ and $[B]$
  in $H_1(\torus,\mathbb{Z}^2)$. If $([U], [V])$ is a homology basis, and
  $[A]=h_A [U] +v_A [V]$, $[B] = h_B [U] + v_B [V]$, then 
  \begin{equation}\label{equ:intersection}
  A\wedge B = h_A v_B - h_B v_A.
  \end{equation}

Recall from Section~\ref{sec:diamondetc} that train-tracks can be seen as unoriented paths on the dual of the diamond graph, and that 
they are assigned the common edge
direction $\pm e^{i\overline{\alpha}}$ of the rhombi. 
A train-track $T$ can also be seen as an oriented path. In this case we associate
the angle $\alpha$ of
the unit vector $e^{i\overline{\alpha}}$,
with the convention that when
walking along $T$, the unit vector $e^{i\overline{\alpha}}$ crosses $T$ from
right to left. 
If $T$ is oriented in the other direction, it is associated the 
angle $\alpha+2K$ (modulo $4K$) of $e^{i\overline{\alpha+2K}}=e^{i(\overline{\alpha}+\pi)} =
-e^{i\overline{\alpha}}$. Seeing train-tracks as unoriented paths amounts to 
considering angles modulo $2K$.

\paragraph{Train-tracks on the torus.}
Train-tracks of $\Gs_1$ form non-trivial self-avoiding
cycles on ${\GR_1}^{*}$. Contrary to what happens in the planar
(either finite or infinite) case, two train-tracks $T$ and $T'$ can cross more than once, but
the number of intersections is minimal, and thus equal to $\vert T\wedge T'\vert$.

Any vertex of ${\GR_1}^*$ is at the intersection of two train-tracks, and edges of
${\GR_1}^*$ are in bijection with pieces of train-tracks between two successive
intersections.
As in the planar case, $\GR_1$ is bipartite, the two classes of
vertices corresponding to vertices of $\Gs_1$ and of $\Gs_1^*$, respectively. In
particular, any closed path on $\GR_1$ has even length.
Conversely, any graph on the torus constructed from a collection of
self-avoiding cycles with the
minimal number of intersections, and whose dual is bipartite, is the dual of the
diamond graph of an isoradial graph on the torus, which can then be lifted to a
$\ZZ^2$-periodic isoradial graph. An example is provided in Figure~\ref{fig:iso_perio} (left and middle).

\paragraph{Minimal closed paths.}
A closed path on $\GR_1$ is said to be \emph{minimal} if it does not cross a
train-track in two opposite directions. Paths in $\GR_1$ obtained by following
the boundary of a train-track are examples of minimal closed paths. 

Let $2p$ be the number of edges of $\GR_1$ used by a minimal closed
path $\gamma$. Then $p$ is the number of vertices of $\Gs_1$ (and also of
$\Gs_1^*$) visited by $\gamma$. The length $2p$ of $\gamma$ is a function of its homology class 
and of those of the train-tracks. It is equal to:
\begin{equation*}
    2p= \sum_{T\in\T} \vert T\wedge \gamma\vert,
  \end{equation*}
where $\T$ denotes the set of train-tracks of $\Gs_1$, picking for each of them a particular orientation. 

\paragraph{Choice of basis, ordering of train-tracks.}
Fix a representative of a basis of the first homology group of the torus $H_1(\torus,\ZZ^2)$,
by taking $\gamma_x$ and $\gamma_y$ to be two minimal oriented closed paths on $\GR_1$.
Define $2p_x$ (resp.~$2p_y$) to be the number of
edges of $\GR_1$ used by $\gamma_x$ (resp.~$\gamma_y$).
These numbers depend only on the homology classes of $\gamma_x$ and $\gamma_y$.

An (oriented) train-track $T$ has primitive homology class $[T]=
h_T[\gamma_x]+v_T[\gamma_y]$
in $H^1(\torus,\mathbb{Z})\simeq \mathbb{Z}^2$, \emph{i.e.},
the two integers $h_T$ and $v_T$ are coprime, since it is a non-trivial, self-avoiding cycle.
We can therefore cyclically order all train-tracks (oriented in the two possible
directions), following the cyclic order of coprime
numbers in $\ZZ^2$ around the origin. Angles of the train-tracks are also in the
same order in $\RR/4K\ZZ$, this being guaranteed by the fact that we can place a rhombus at each intersection of two train-tracks, 
with the correct orientation, see Figure~\ref{fig:iso_perio} (right).

\begin{figure}
  \centering
  \resizebox{1\textwidth}{!}{\input 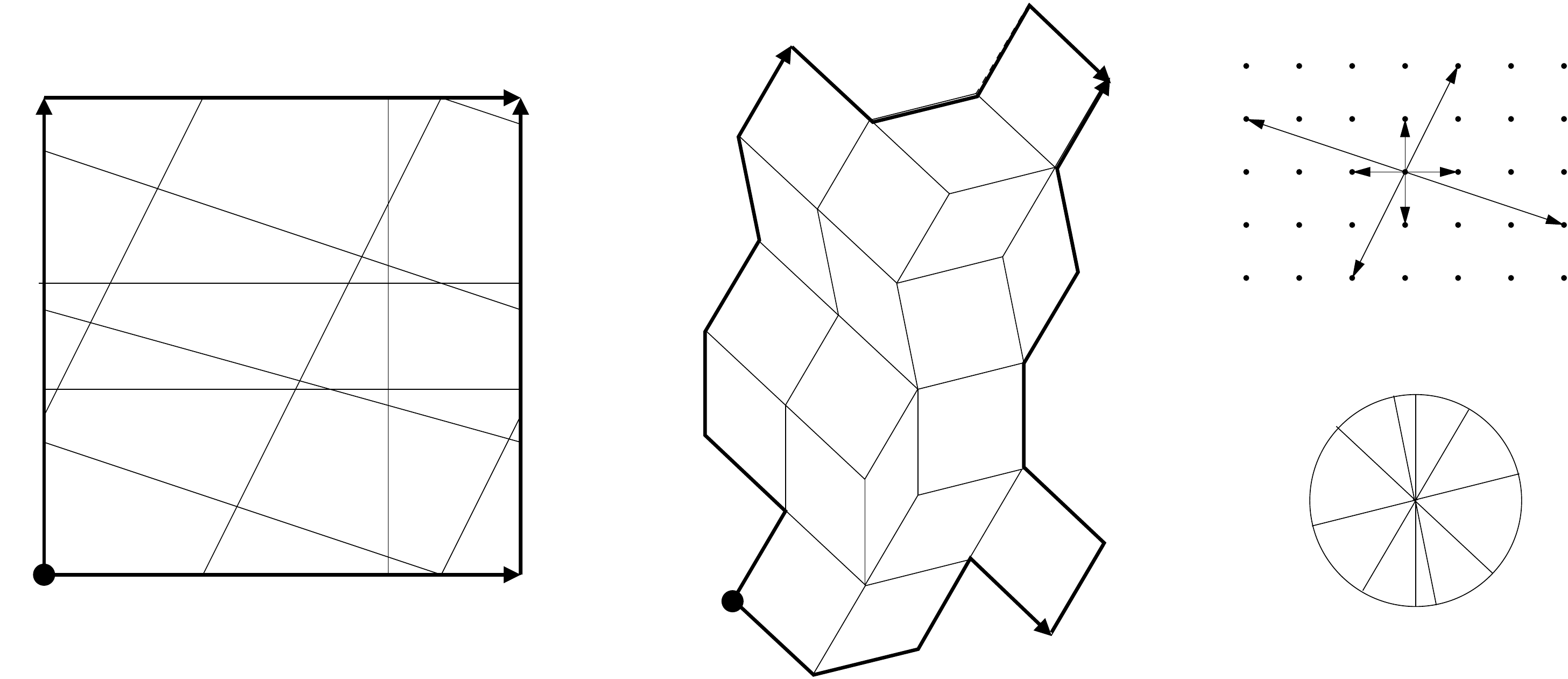_t}
  \caption{Left: a set of non-trivial cycles on the torus with the minimum
    number of intersections. Middle: the diamond graph of an isoradial graph on
    the torus, whose train-tracks have the same combinatorics as cycles on
    the left. Right: cyclic ordering of the homology class of the train-tracks and of the 
    corresponding angles, represented on the trigonometric circle.}
  \label{fig:iso_perio}
\end{figure}

\subsection{Quasiperiodic functions, characteristic polynomial}
\label{sec:qperfunc}


Define $\widetilde{\gamma}_x$ and $\widetilde{\gamma}_y$ to be closed paths on
$\Gs_1^*$, obtained from $\gamma_x$ and $\gamma_y$ as follows: replace any
sequence of steps $x^*\to y \to z^*$ of dual,
primal, dual vertices visited by $\gamma_x$ (resp.\ $\gamma_y$) by a sequence
$x^*=x^*_0 \to x^*_1 \to\cdots \to x^*_{n} = z^*$ of dual vertices around $y$,
``bouncing'' over
$y$ on top of $\gamma_x$ (resp.\ on the right of $\gamma_y$), and remove
backtracking steps if necessary. In other words,
$\widetilde{\gamma}_x$ goes around every vertex of $\Gs_1$ visited by $\gamma_x$ in the
clockwise order, see Figure~\ref{fig:paths_fund_domain}.

The cycles $\widetilde{\gamma}_x$ and $\widetilde{\gamma}_y$ delimit a \emph{fundamental domain} of $\Gs$.
To simplify notation, we will write $\widetilde{\gamma}_x$ and $\widetilde{\gamma}_y$ for the cycles and 
their lifts in $\Gs$, and write $\Gs_1=(\Vs_1,\Es_1)$ for the toroidal graph and the fundamental domain.

\begin{figure}[ht]
  \centering
  \resizebox{!}{6cm}{\input{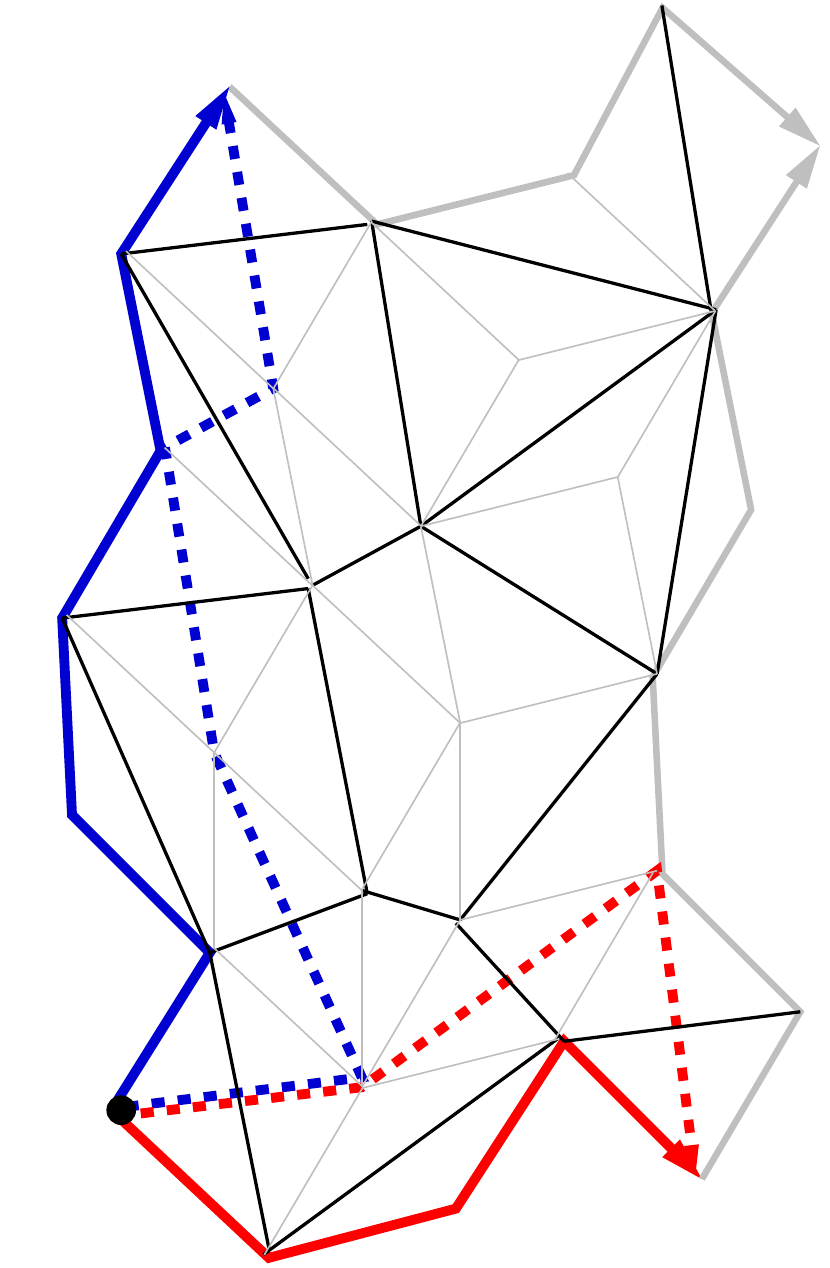_t}}
  \caption{%
     The paths $\widetilde{\gamma}_x$ and $\widetilde{\gamma}_y$ (in dashed
lines) constructed from $\gamma_x$ and $\gamma_y$ (in plain lines)
  delimiting the fundamental domain~$\Gs_1$.
  Edges crossing the dashed path $\widetilde{\gamma}_x$ (resp.
  $\widetilde{\gamma}_y$) get an extra weight $z,z^{-1}$ (resp.\ $w,w^{-1}$) in 
$\Delta^m(z,w)$.
\label{fig:paths_fund_domain}}
\end{figure}

For $(m,n)$ in $\ZZ^2$, and $x$ a vertex of $\Gs$ (resp.\ $\GR$, resp.\ $\Gs^*$),
denote by $x+(m,n)$ the vertex $x+m\widetilde{\gamma}_x+n\widetilde{\gamma}_y$.
For $(z,w)\in \CC^2$, define $\CC^{\Vs}_{(z,w)}$ to be
the space of functions $f$ on vertices of $\Gs$
which are $(z,w)$-quasiperiodic:
\begin{equation*}
  \forall\, \text{$x\in\Vs$},\ \forall\, (m,n)\in\ZZ^2,\quad f(x+(m,n))=z^{-m} w^{-n} f(x).
\end{equation*}
The vector space $\CC^{\Vs}_{(z,w)}$ is finite dimensional, isomorphic to
$\CC^{\Vs_1}$, since quasiperiodic functions are completely
determined by their values in the fundamental domain $\Vs_1$.
For every vertex $x$ of $\Gs_1$, define
$\delta_x(z,w)$ to be the $(z,w)$-quasiperiodic function equal to zero on vertices
which are not translates of $x$, and equal to $1$ at $x$. Then the
collection $\{\delta_x(z,w)\}_{x\in\Vs_1}$ is a natural basis for $\CC^{\Vs}_{(z,w)}$.

Since $\Delta^m$ is periodic, the vector space $\mathbb{C}^{\Vs}_{(z,w)}$ is
invariant under the action of this operator. We denote by $\Delta^m(z,w)$ the matrix
of the restriction of $\Delta^m$ to the space $\CC^{\Vs}_{(z,w)}$ in the basis
$(\delta_x(z,w))_x$. The matrix $\Delta^m(z,w)$ can be seen as the matrix of the massive Laplacian on $\Gs_1$
with extra weight $z^{\pm 1}$ (resp.~$w^{\pm 1}$) for edges crossing\footnote{An
  oriented edge crossing $\widetilde\gamma_y$ gets the extra weight $z$ if it goes
  from a vertex in one fundamental domain to a vertex in the fundamental domain
  on the right of $\widetilde\gamma_y$, and $z^{-1}$ otherwise. An oriented
  edge crossing $\widetilde\gamma_x$ gets the extra weight $w$ if it goes from a
  vertex in one fundamental domain to a vertex in the fundamental domain above,
  \emph{i.e.}, on the left of $\widetilde\gamma_x$.
}
$\widetilde\gamma_x$ (resp.~$\widetilde\gamma_y$), the sign of the exponent depending on
the orientation of the edge with respect to $\widetilde\gamma_x$ or
$\widetilde\gamma_y$. By construction, these edges with extra weight $z$
(resp.\ $w$) are connected to vertices of $\Gs_1$ visited by $\gamma_y$ (resp.\ $\gamma_x$).

The \emph{characteristic polynomial} of the massive Laplacian on $\Gs$ is the
  bivariate Laurent polynomial $P_{\Delta^m}(z,w)$ equal to the determinant
  of the matrix $\Delta^m(z,w)$.
The \emph{Newton polygon} of $P_{\Delta^m}$ is the convex hull of the exponents $(i,j)\in\ZZ^2$ of 
the monomials $z^i w^j$ of $P_{\Delta^m}(z,w)$.
  
The characteristic polynomial plays an important role in understanding the massive Laplacian on periodic
isoradial graphs. We now study some of its properties.

\begin{lem}$\,$
  \label{lem:inclu_Newton_PDelta}
  \begin{itemize}
  \item The polynomial $P_{\Delta^m}$ is reciprocal: $
    \ \forall\, (z,w)\in {\CC}^2, \ 
    P_{\Delta^m}(z,w)=P_{\Delta^m}(z^{-1},w^{-1}).$
  \item The Newton polygon of $P_{\Delta^m}$ is contained in a rectangle
  $[-p_y,p_y]\times[-p_x,p_x]$, where $p_x$ (resp.
  $p_y$) is the number of
  vertices of $\Gs^*$ on $\gamma_x$ (resp.\ $\gamma_y$).
\end{itemize}
  \end{lem}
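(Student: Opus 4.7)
The plan is to prove the two items separately, starting with reciprocity before turning to the Newton polygon bound.

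For the reciprocity, I would establish the matrix identity $\Delta^m(z,w)^{\mathsf T}=\Delta^m(z^{-1},w^{-1})$ and then take determinants. The operator $\Delta^m$ on $\Gs$ is $\ZZ^{2}$-invariant and symmetric with real edge weights, and each entry $\Delta^m(z,w)_{x,y}$ is obtained from the (infinite) operator by collecting the contributions of all edges from $x$ to translates of $y$ and inserting the appropriate powers of $z,w$ coming from quasiperiodicity. Swapping $x$ and $y$, using symmetry of $\Delta^m$ and $\ZZ^{2}$-invariance to shift the translate index, and then reindexing $(a,b)\mapsto(-a,-b)$ sends every $z^{a}w^{b}$ to $z^{-a}w^{-b}$, which is precisely the corresponding entry of $\Delta^m(z^{-1},w^{-1})$. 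I expect no obstacle here.

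For the Newton polygon inclusion, I would use the permutation expansion
\[
  P_{\Delta^m}(z,w)=\sum_{\sigma}\mathrm{sgn}(\sigma)\prod_{x\in\Vs_1}\Delta^m(z,w)_{x,\sigma(x)},
\]
summed over permutations of $\Vs_1$ for which $\sigma(x)=x$ or $\sigma(x)$ is a neighbor of $x$. Each monomial $z^{a}w^{b}$ appearing in $\Delta^m(z,w)_{x,\sigma(x)}$ corresponds to a specific edge in $\Gs$ from $x$ to some $\ZZ^{2}$-translate of $\sigma(x)$. The $z$-exponent contributed by $\sigma$ is therefore the number of chosen edges crossing $\tilde\gamma_y$ with a $+z$ weight minus the number crossing with a $-z$ weight; I aim to bound each of these two counts by $p_y$, and analogously bound the $\pm w$ counts by $p_x$.

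The key geometric step, and the one I anticipate as the main obstacle, is to show that every primal edge of $\Gs_1$ crossing $\tilde\gamma_y$ is incident to one of the $p_y$ primal vertices $y_1,\ldots,y_{p_y}$ of $\Gs_1$ visited by $\gamma_y$, with a definite endpoint on $\gamma_y$ depending on orientation. This follows from the construction of $\tilde\gamma_y$ as a concatenation of detours, one around each $y_i$, where the detour around $y_i$ uses only dual edges of $\Gs_1^{*}$ lying on the face of $\Gs_1^{*}$ dual to $y_i$ (equivalently, duals of primal edges incident to $y_i$). Moreover, taking every detour uniformly on the right of $\gamma_y$ places each $y_i$ on a fixed side of $\tilde\gamma_y$, so that when a crossing edge is oriented to carry $z^{+1}$ its tail is one of the $y_i$, and when oriented oppositely its head is one of the $y_i$. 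With this in hand the bound follows by simple accounting: since $\sigma$ is a bijection, each $y_i$ is the source of at most one edge and the target of at most one edge used by $\sigma$, so $\sigma$ can pick up at most $p_y$ factors of $z^{+1}$ and at most $p_y$ factors of $z^{-1}$. Hence $|\deg_z(\sigma)|\leq p_y$ uniformly in $\sigma$, giving $\deg_z(P_{\Delta^m})\leq p_y$; reciprocity then yields $\deg_{z^{-1}}(P_{\Delta^m})\leq p_y$. Repeating the argument with $\tilde\gamma_x,\gamma_x$ in place of $\tilde\gamma_y,\gamma_y$ bounds the $w$-degree by $p_x$, establishing the desired inclusion of the Newton polygon in $[-p_y,p_y]\times[-p_x,p_x]$.
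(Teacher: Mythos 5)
Your proof is correct and follows essentially the same route as the paper: reciprocity via the transpose identity $\Delta^m(z,w)^{\mathsf T}=\Delta^m(z^{-1},w^{-1})$, and the Newton-polygon bound via the Leibniz expansion of $\det\Delta^m(z,w)$ combined with the observation that every edge crossing $\widetilde\gamma_y$ is incident to one of the $p_y$ primal vertices visited by $\gamma_y$, so that a permutation can collect at most $p_y$ factors of $z$. You additionally spell out the tail/head dichotomy to bound the $z^{+1}$ and $z^{-1}$ counts separately, whereas the paper bounds only the positive $z$-degree and invokes reciprocity for the other side, but this is the same idea carried out a shade more symmetrically.
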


\begin{proof}
  The operator $\Delta^m$ is symmetric, thus
   $ {\Delta^{m}(z,w)}^T = \Delta^{m}(z^{-1},w^{-1})$, which implies the first part.
For the second part,
  let us first prove that the Newton polygon is contained in a vertical strip
  $[-p_y,p_y]\times \RR$.  Since $P_{\Delta^m}$ is reciprocal, it is enough to
  show that the degree of $z$ in any monomial of $P_{\Delta^m}$ cannot exceed
  $p_y$.
  
  The determinant of $\Delta^m(z,w)$ can be expanded as a sum over permutations
$\sigma$ of the vertices of $\Gs_1$. In this sum,
  the monomials with highest
  degree in $z$ come from bijections $\sigma$ 
  where as
  many vertices $v$ as possible are connected to $\sigma(v)$ with an edge
  crossing the path $\widetilde{\gamma}_y$, hence having an extra weight $z$ in
  $\Delta^m(z,w)$. However, there are at most $p_y$ vertices with this property,
  since they must be chosen among the $p_y$ vertices visited by $\gamma_y$.

  The fact that the Newton polygon is also contained in a horizontal strip
  $\RR\times[-p_x,p_x]$ follows from the same argument, by exchanging the role
  of $z$ and $w$.
\end{proof}

The confinement result for the Newton polygon in the previous lemma is highly
dependent on the homology class of the cycles $\gamma_x$ and $\gamma_y$.
If instead we use paths $\ubar{\gamma}_x,\ubar{\gamma}_y$
representing another basis of the
first homology group $H_1(\torus,\ZZ^2)$, then we obtain that the Newton polygon is included in
another parallelogram. More precisely, suppose that the paths $\ubar{\gamma}_x,\ubar{\gamma}_y$ are oriented so that
  $[\ubar{\gamma}_x] = a [\gamma_x] + b [\gamma_y]$, and
  $[\ubar{\gamma}_y] = c [\gamma_x] + d [\gamma_y]$,
where $M=\bigl(\begin{smallmatrix}a&b\\c&d\end{smallmatrix}\bigr)\in SL_2(\mathbb{Z})$.
Define also $\ubar{z}=z^a w^b$, and $\ubar{w}=z^c w^d$.
Then we can do the same construction as above with the variables $\ubar{z}$ and $\ubar{w}$
across the paths $\ubar{\gamma}_y$ and $\ubar{\gamma}_x$, to get a new
polynomial $\ubar{P}_{\Delta^m}(\ubar{z},\ubar{w})$. The polynomials $\ubar{P}_{\Delta^m}$ and $P_{\Delta^m}$ are related by the formula:
\begin{equation*}
  P_{\Delta^m}(z,w)=\ubar{P}_{\Delta^m}(z^{a} w^{b},z^{c}w^{d})=\ubar{P}_{\Delta^m}(\ubar{z},\ubar{w}).
\end{equation*}
The Newton polygon of $P_{\Delta^m}$ (in the $(z,w)$ variables) is obtained
as the image of that of $\ubar{P}_{\Delta^m}$ (in the $(\ubar{z},\ubar{w})$ variables) by the linear
map $M$.

We can apply the previous lemma to $\ubar{P}_{\Delta^m}$, and get that its
Newton polygon is included in a rectangle. The Newton polygon of
$P_{\Delta^m}$ is therefore included in the parallelogram, obtained as the image by $M$ of
that rectangle. In particular, it bounds the width
of the Newton polygon of $P_{\Delta^m}$ between two parallel lines with any rational
slope, this width being related to the
number of edges of the minimal paths with a certain homology.

Of particular interest is the case where $(\ubar{z},\ubar{w})=(z,w/z)$, \emph{i.e.}, where 
$M=\bigl(\begin{smallmatrix} 1 & -1 \\ 0 & \phantom{-}1\end{smallmatrix}\bigr)$.
Indeed, the horizontal width
of the Newton polygon of the polynomial $\ubar{P}_{\Delta^m}$ is directly related to
the degree of $P_{\Delta^m}$,
computed as the sum
of the degrees in $z$ and $w$.

\begin{cor}
  \label{cor:control_degree_P}
  Let $\gamma$ be a minimal closed path on $\Gs_1$ such that
  $[\gamma]=-[\gamma_x]+[\gamma_y]$, visiting $p$ vertices of $\Gs_1$ (and
  having $2p$ edges). Then, the Newton polygon of $P_{\Delta^m}$ is contained in
  a band delimited by the straight lines: $y+x\pm p=0$. In particular, the highest
  (resp.~lowest) degree of a monomial of $P_{\Delta^m}$ is not greater than $p$
  (resp.~not less than $-p$).
\end{cor}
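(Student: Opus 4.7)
My plan is to deduce this from Lemma~\ref{lem:inclu_Newton_PDelta} by applying it in a new homology basis adapted to $\gamma$, using the change-of-basis framework developed just above the statement. Specifically, I will choose $(\ubar{\gamma}_x,\ubar{\gamma}_y)$ via the $SL_2(\ZZ)$ matrix highlighted above (corresponding to the variable change $(\ubar{z},\ubar{w})=(z,w/z)$), so that one of the new basis cycles has homology class $\pm[\gamma]$, and the accompanying substitution makes one coordinate-exponent of a monomial of $\ubar{P}_{\Delta^m}$ equal to the total degree $\deg_z+\deg_w$ of the corresponding monomial of $P_{\Delta^m}$.

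Since the length of a minimal cycle in $\Gs_1$ depends only on its homology class, I may take this new basis cycle to visit exactly $p$ vertices of $\Gs_1^*$. Lemma~\ref{lem:inclu_Newton_PDelta} applied to $\ubar{P}_{\Delta^m}$ then bounds the corresponding coordinate-exponent of every monomial $\ubar{z}^i\ubar{w}^j$ of $\ubar{P}_{\Delta^m}$ by $p$ in absolute value. Unpacking the identity $P_{\Delta^m}(z,w)=\ubar{P}_{\Delta^m}(z^a w^b,z^c w^d)$, this exponent equals the total degree $i'+j'$ of the associated monomial $z^{i'}w^{j'}$ of $P_{\Delta^m}$, so every monomial satisfies $|i'+j'|\le p$ -- exactly the band delimited by $y+x\pm p=0$. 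The assertion on extreme total degrees follows immediately as a special case.

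No conceptual obstacle arises: the whole argument is a bookkeeping exercise once Lemma~\ref{lem:inclu_Newton_PDelta} and the preceding change-of-variable discussion are in hand. The only step requiring genuine care is selecting the matrix $M$ so that one axis of the new Newton polygon aligns with the total-degree direction of the original one, and then correctly reading off which of the two rectangle half-widths is the one forced to be $p$ by the minimality hypothesis on $\gamma$.
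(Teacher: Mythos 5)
Your proposal is correct and follows essentially the same route as the paper: the paragraphs immediately preceding the corollary set up precisely this change-of-basis argument (applying Lemma~\ref{lem:inclu_Newton_PDelta} to $\ubar{P}_{\Delta^m}$ with the substitution $(\ubar{z},\ubar{w})=(z,w/z)$, whose $\ubar z$-exponent equals the total degree in $(z,w)$, and identifying the relevant rectangle half-width with $p$ via the fact that minimal cycle length depends only on homology class), and the paper treats the corollary as an immediate consequence needing no further proof.
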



\subsection{The spectral curve and the amoeba of the massive Laplacian}\label{sec:spectral_amoeba}

The zero set of the characteristic polynomial $P_{\Delta^{m(k)}}$ of 
the massive Laplacian defines a curve, known as the \emph{spectral curve}, denoted $\Ccal^k$:
\begin{equation*}
\Ccal^k=\{(z,w)\in \mathbb{C}^2:P_{\Delta^{m(k)}}(z,w)=0\}.
\end{equation*}

In Proposition~\ref{prop:geom_torus}, 
we show that the spectral curve has geometric genus~$1$
and in Theorem~\ref{thm:harnack}, we prove that it is \emph{Harnack}.  
In Theorem~\ref{thm:Harnack2}, we 
prove that every genus~$1$ Harnack curve with $(z,w)\leftrightarrow(z^{-1},w^{-1})$ symmetry
  is the spectral curve of the massive
  Laplacian of a periodic isoradial graph, for a certain value of $k\in(0,1)$.
These two points are reminiscent of what has been done in~\cite{KO2} for the correspondence
between genus $0$ Harnack curves and critical dimer spectral curves on isoradial graphs.


The \emph{real locus} of the spectral curve consists of
the set of points of $\Ccal^k$ that are invariant under complex conjugation:
  \begin{equation*}
\{(z,w)\in \Ccal^k\ \vert \ (\overline{z},\overline{w})=(z,w)\}=
\{(x,y)\in\RR^2\ \vert \ P_{\Delta^{m(k)}}(x,y)=0\},
\end{equation*} 
apart from isolated points. The latter are referred to as \emph{solitary nodes} of the curve.

The \emph{amoeba}~$\A^k$ of the curve $\Ccal^k$
is the image of $\Ccal^k$ under
$\Log:(z,w)\to(\log\vert z\vert ,\log\vert w\vert )$. 

General geometric features of the amoeba can be described from the Newton polygon
of the characteristic polynomial, see~\cite{Viro,GKZ} for an overview. 
It reaches infinity by several \emph{tentacles}, which are images by $\Log$ of neighborhoods of
the curve $\Ccal^k$ where $z$ and/or $w$ is $0$ or infinite. Each tentacle (counted with multiplicity) corresponds to a segment 
between two successive integer points on the boundary of the Newton polygon, and the direction of the asymptote is 
the outward normal to the segment. The amoeba's complement consists of components between the tentacles,
and bounded components. Components of the amoeba's complement are convex.  Bounded (resp.\ unbounded) components correspond to integer
points inside (resp.\ on the boundary of) the 
Newton polygon. The maximal number of bounded components is thus the number of inner integer points of the Newton polygon. 
Amoebas are unbounded, but their area is bounded by $\pi^2$ the area of the Newton polygon.

Using our explicit parametrization of the spectral curve $\Ccal^k$
allows us to prove properties of the amoeba $\A^k$, see Lemmas~\ref{lem:tentacle_amoeba}
and~\ref{lem:boundary_amoeba}. We show that its complement has a single bounded component
and prove in Proposition~\ref{prop:area_hole} that its area is increasing as a
function of the elliptic modulus $k$.

Figure~\ref{fig:amoeba} shows the Newton polygon and the amoeba of the spectral curve of the massive Laplacian of the graph
depicted in Figure~\ref{fig:iso_perio}, for $k^2=0.8$.

\begin{figure}[ht]
  \centering
  \begin{subfigure}{55mm}
  \includegraphics[width=5cm]{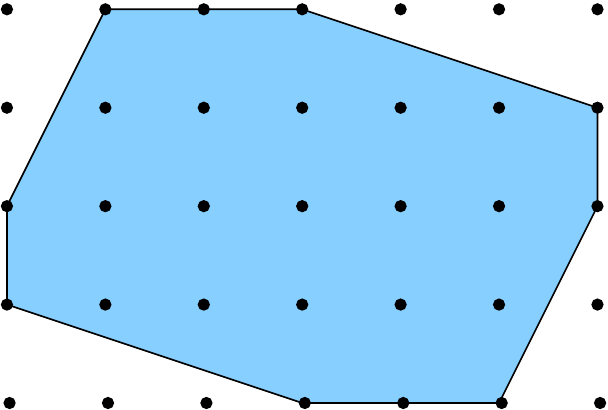}
\end{subfigure}
\qquad
  \begin{subfigure}{85mm}
  \includegraphics[width=8cm]{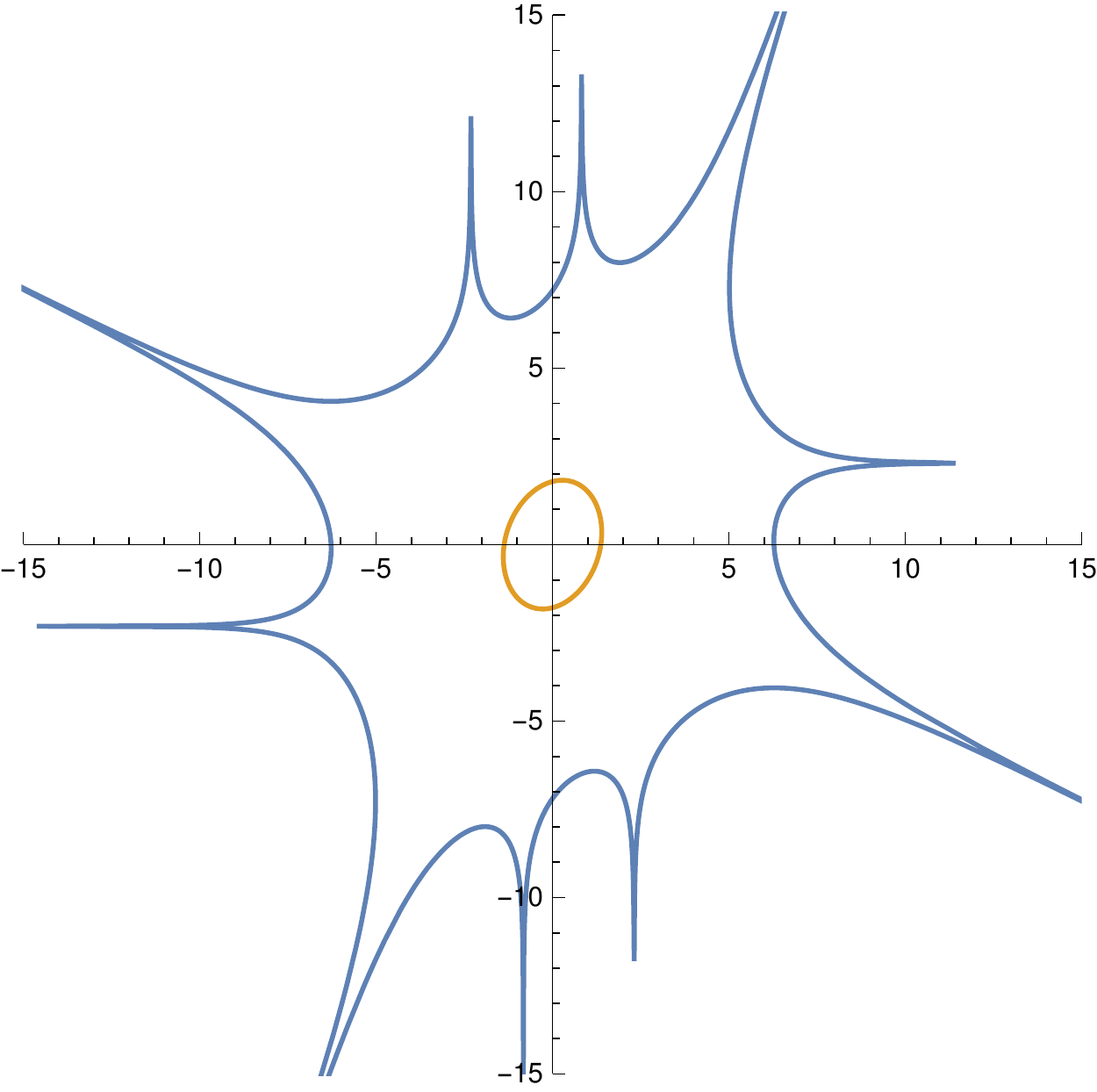}
\end{subfigure}
 \caption{Left: the Newton polygon of the massive Laplacian of 
 the graph pictured in Figure~\ref{fig:iso_perio}.
 Right: the amoeba $\A^k$ of its spectral curve $\Ccal^k$, when $k^2=0.8$.}
  \label{fig:amoeba}
\end{figure}

\subsubsection{Explicit parametrization of the spectral curve}\label{sec:paramSpectralCurve}

For $u$ in the torus $\TT(k)$, define
\begin{align}
  z(u|k) &= 
  \prod_{e^{i\overline\alpha} \in \gamma_x}
  \left(i\sqrt{k'}\sc(u_\alpha)\right)
  =
  \prod_{T\ \text{train-track in }\T}
  \left(i\sqrt{k'}\sc(u_{\alpha_T})\right)^{v_T},\nonumber\\
  w(u|k) &=
  \prod_{e^{i\overline\alpha} \in \gamma_y}
  \left(i\sqrt{k'}\sc(u_\alpha)\right)
  =
  \prod_{T\ \text{train-track in }\T}
  \left(i\sqrt{k'}\sc(u_{\alpha_T})\right)^{-h_T},
  \label{eq:def_zwofu}
\end{align}
where $\alpha_T$ is the angle associated to the oriented train-track $T$,
and $[T]=h_T[\gamma_x] + v_T[\gamma_y]$ is its homology class in $H_1(\torus,\ZZ^2)$.
There are $2p_x$ (resp.~$2p_y$) terms in the product defining $z(u|k)$
(resp.\ $w(u|k)$). Note that for every vertex $x$ of $\Gs_1$, we have
$z(u|k)=\expo_{(x,x+(1,0))}(u|k)$ and $w(u|k)=\expo_{(x,x+(0,1))}(u|k)$. Define $\psi(\cdot|k)$ to be the map:
\begin{equation*}
\begin{array}{llll}
\psi(\cdot|k):& \TT(k)&\rightarrow &\CC^2\\
&u&\mapsto &\psi(u|k)=(z(u|k),w(u|k)).
\end{array}
\end{equation*}

\begin{prop}
  \label{prop:geom_torus} 
The map $\psi$
  provides a complete parametrization of the spectral curve $\Ccal^k$ of the massive Laplacian. In
  particular, $\Ccal^k$ is an irreducible curve with geometric genus~$1$.
\end{prop}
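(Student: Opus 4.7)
The plan is to produce, for each $u\in\TT(k)$, a non-trivial quasi-periodic element of $\ker\Delta^{m(k)}$ whose quasi-periodicity character is exactly $\psi(u)$, thus placing $\psi(u)$ on $\Ccal^k$; then to match the bidegree of the parametrized curve $\psi(\TT(k))$ against the Newton polygon of $P_{\Delta^{m(k)}}$ in order to conclude that $\psi$ is birational and that $\Ccal^k$ is irreducible of geometric genus $1$.

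To build the quasi-periodic harmonic function, fix a base vertex $x_0\in\Vs_1$ and consider $f_u(x)=\expo_{(x_0,x)}(u|k)$ on the lifted graph $\Gs$. Multiplicativity of the massive exponential along paths, combined with the product formula~\eqref{eq:def_zwofu} and the relations $z(u|k)=\expo_{(x,x+(1,0))}(u|k)$, $w(u|k)=\expo_{(x,x+(0,1))}(u|k)$ recalled just before the statement, yields $f_u(x+(m,n))=z(u|k)^m w(u|k)^n f_u(x)$, so $f_u$ lies in $\CC^{\Vs}_{(z(u|k)^{-1},w(u|k)^{-1})}$. Harmonicity of $\expo_{(x_0,\cdot)}(u|k)$ with respect to $\Delta^{m(k)}$, established earlier in the paper, then shows that $f_u$ is a non-zero element of the kernel of $\Delta^{m(k)}(z(u|k)^{-1},w(u|k)^{-1})$, so this matrix is singular and $(z(u|k)^{-1},w(u|k)^{-1})\in\Ccal^k$; by the reciprocity in Lemma~\ref{lem:inclu_Newton_PDelta}, also $\psi(u)\in\Ccal^k$.

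Since $\psi$ is a non-constant holomorphic map from the compact Riemann surface $\TT(k)$ to $(\mathbb{P}^1)^2$, its image $C':=\psi(\TT(k))$ is an irreducible projective curve contained in $\Ccal^k$. To prove $C'=\Ccal^k$ and birationality of $\psi$, I would compute the degrees of $z(\cdot|k)$ and $w(\cdot|k)$ as elliptic functions on $\TT(k)$, reading off the pole/zero pattern of each factor $i\sqrt{k'}\sc(u_{\alpha_T})$ and using the exponents $v_T$ and $-h_T$ appearing in~\eqref{eq:def_zwofu}. These degrees factor as $\deg(\psi)$ times the bidegree of $C'$ in $(\mathbb{P}^1)^2$, and comparing with the rectangle $[-p_y,p_y]\times[-p_x,p_x]$ from Lemma~\ref{lem:inclu_Newton_PDelta}, refined by the sharper parallelogram bounds from Corollary~\ref{cor:control_degree_P} applied in every primitive direction of $H_1(\torus,\ZZ^2)$, pins down $\deg(\psi)=1$ and $C'=\Ccal^k$. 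Birationality of $\psi$ identifies $\TT(k)$ with the normalization of $\Ccal^k$, and since $\TT(k)$ is a smooth elliptic curve, $\Ccal^k$ is irreducible of geometric genus exactly $1$, while $P_{\Delta^{m(k)}}$ is (up to scalar) irreducible as a Laurent polynomial.

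The main technical obstacle is the bidegree computation. One must carefully count, as meromorphic functions on $\TT(k)$, how many zeros and poles $z(\cdot|k)$ and $w(\cdot|k)$ acquire after the cancellations forced by the train-track exponents $v_T$ and $-h_T$, and then verify that simultaneous equality in the confinement bounds of Lemma~\ref{lem:inclu_Newton_PDelta} and Corollary~\ref{cor:control_degree_P} in each primitive direction can only be achieved by the birational choice $\deg(\psi)=1$. Once this accounting is done, the rest of the argument is formal.
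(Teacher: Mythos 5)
Your plan is essentially the paper's proof: show that the massive exponential gives a $(z,w)$-quasiperiodic massive harmonic function so that $\psi$ lands in $\Ccal^k$, then compare the elliptic orders $2p_x,2p_y$ of $z(\cdot|k),w(\cdot|k)$ with the Newton-polygon confinement of Lemma~\ref{lem:inclu_Newton_PDelta} to conclude that the irreducible factor cut out by $\psi(\TT(k))$ must coincide with $P_{\Delta^{m(k)}}$ and that $\psi$ is birational, so $\TT(k)$ is the normalization of $\Ccal^k$. The only differences are cosmetic: the paper works with $\expo_{(\cdot,y)}(u)$, which is directly $(z(u),w(u))$-quasiperiodic (so no detour through reciprocity is needed), and it uses only the rectangle bound of Lemma~\ref{lem:inclu_Newton_PDelta}, the additional parallelogram bounds of Corollary~\ref{cor:control_degree_P} being superfluous at this point and, contrary to what you suggest, not in themselves what rules out $\deg\psi>1$.
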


\begin{proof}
  For every $u\in \TT(k)$, the function $\expo_{(\cdot,y)}(u)$ is 
  massive harmonic by Proposition~\ref{prop:exp_harmo}; it is
  $(z(u),w(u))$-quasiperiodic, since for every
  $(m,n)\in\ZZ^2$ and every vertex $x$ of $\Gs_1$,
  \begin{equation}
    \expo_{(x+(m,n),y)}(u)=\expo_{(x+(m,n),x)}(u)\expo_{(x,y)}(u)
    = z(u)^{-m} w(u)^{-n}\expo_{(x,y)}(u).
    \label{eq:expo_perio}
  \end{equation}
  As a consequence, for every $u$, the function $\expo_{(\cdot,y)}(u)$ belongs to the kernel 
  of $\Delta(z(u), w(u))$, and
  $
  P_{\Delta^m}(z(u), w(u))=0.
  $

  The image of the application $\psi$ is necessarily an irreducible component of
  the curve $\Ccal^k$, corresponding to the zeros of an
  irreducible factor
  $R$ of $P_{\Delta^m}$. But from the definition \eqref{eq:def_zwofu} of $z(u)$,
  we see that it has order $2p_x$: it takes the value $0$ (and thus any value) $2p_x$
  times. This means
  for example that the degree of the polynomial $R(1,w)$ is $2p_x$: indeed,
  if $u_1,\dotsc,u_{2p_x}$ are the distinct values of $u$ for which $z(u)=1$,
  then $w(u_1),\dotsc,w(u_{2p_x})$ are the roots of $R(1,w)$. But this degree is
not greater than the height of the Newton polygon of $R$. Applying the same
argument to $w(u)$, which has order $2p_y$, we get that
  the smallest rectangle containing the Newton polygon of $R$
  has height (resp.~width) $2p_x$ (resp.~$2p_y$).
  But if $R$ is not reduced to a monomial, then
  the Newton polygon of $P_{\Delta^m}$ has width strictly larger than the one of $R$ and doesn't
  fit in a $2p_y\times 2p_x$ rectangle, which is in contradiction with
  Lemma~\ref{lem:inclu_Newton_PDelta}. Therefore $R$ and $P_{\Delta^m}$ define
  the same curve in $\CC^2$ and $\psi$ parametrizes the whole spectral curve.

  The application $\psi$ is a birational map between $\TT(k)$ and the spectral curve $\Ccal^k$. The torus $\TT(k)$ is thus
  the normalization of $\Ccal^k$, and these curves have the same
  geometric genus, equal to $1$.
\end{proof}

The proof of Proposition~\ref{prop:geom_torus} shows that the bound on the width and height of the Newton
polygon obtained in Lemma~\ref{lem:inclu_Newton_PDelta} is tight, and
that the extension to other families of closed paths allows one to completely reconstruct
the Newton polygon of $P_{\Delta^{m(k)}}$, as the intersection of bands
contained between lines $ay-bx\pm p =0$.

The explicit parametrization $\psi$ of the spectral curve $\Ccal^k$ allows to
show that it is \emph{maximal}, meaning that its real locus has the largest
possible number of components, given by the geometric genus of the curve plus
$1$. In our case, it is $1+1=2$.

\begin{lem}\label{rem:real_locus}
The real locus of the spectral curve $\Ccal^k$ is the image by $\psi$ of $\RR/4K\ZZ+\{0,2iK'\}$; 
it thus has two components, and the spectral curve is maximal.
The connected component with ordinate $0$ is unbounded; the other one is bounded away from $0$ and infinity.
\end{lem}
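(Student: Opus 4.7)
The plan is to exploit the birational parametrization $\psi\colon \TT(k)\to \Ccal^k$ of Proposition~\ref{prop:geom_torus}: since $\psi$ is a birational bijection, the real locus of $\Ccal^k$ (away from the isolated solitary nodes, which contribute no one-dimensional component) coincides with the $\psi$-image of the fixed set on $\TT(k)$ of an anti-holomorphic involution realizing complex conjugation on the curve. To identify this involution, I note that the train-track angles $\alpha_T$ and the modulus $k$ are real and $\mathrm{sc}(\,\cdot\,|k)$ has real Taylor coefficients, so each factor in $z(u|k)$ or $w(u|k)$ satisfies $\overline{i\sqrt{k'}\,\mathrm{sc}(u_\alpha|k)}=-i\sqrt{k'}\,\mathrm{sc}(\overline{u_\alpha}|k)$. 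Since $z$ and $w$ are products of an \emph{even} number of such factors ($2p_x$ and $2p_y$ respectively), the signs cancel to give
\[
\overline{z(u|k)}=z(\bar u|k),\qquad \overline{w(u|k)}=w(\bar u|k),
\]
so the anti-holomorphic involution $\sigma(u)=\bar u$ on $\TT(k)\simeq\CC/(4K\ZZ+4iK'\ZZ)$ corresponds under $\psi$ to complex conjugation on $\Ccal^k$.

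Next I would compute the fixed set of $\sigma$: the congruence $u\equiv \bar u$ modulo the period lattice reduces to $2i\,\mathrm{Im}(u)\in 4iK'\ZZ$, i.e.\ $\mathrm{Im}(u)\in 2K'\ZZ$, giving exactly the two disjoint circles $\RR/4K\ZZ$ and $\RR/4K\ZZ+2iK'$ inside $\TT(k)$. Birationality of $\psi$ then yields that the real locus of $\Ccal^k$ (outside the solitary nodes) equals $\psi\bigl(\RR/4K\ZZ+\{0,2iK'\}\bigr)$, with at most two one-dimensional components. Harnack's classical inequality $(\#\text{components}\le g+1)$, applied to the normalization of geometric genus $1$, forbids a third component; hence once the two circles are shown to map to distinct components---which will follow from the boundedness analysis below---the curve is maximal.

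For the (un)boundedness claim, on the real circle $\mathrm{Im}(u)=0$ each factor $i\sqrt{k'}\,\mathrm{sc}(u_\alpha)$ is purely imaginary (since $\mathrm{sc}$ of a real argument is real), and as $u$ traverses $\RR/4K\ZZ$ it passes through $0$ at the zeros of $\mathrm{sc}$ and through $\infty$ at its poles; hence $|z(u)|$ and $|w(u)|$ attain both $0$ and $\infty$, and this component is unbounded. On the shifted circle $\mathrm{Im}(u)=2K'$, the Jacobi translation identity $\mathrm{sc}(\cdot+iK')=i\,\mathrm{nd}(\cdot)$---relevant because, in the paper's convention for $u_\alpha$, the shift of $u$ by $2iK'$ produces a shift by $iK'$ inside $\mathrm{sc}$---converts each factor into $-\sqrt{k'}/\mathrm{dn}$ evaluated at a real argument. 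Since $\mathrm{dn}$ takes values in $[k',1]$ on $\RR$, each factor is real and lies in the fixed interval $[-1/\sqrt{k'},-\sqrt{k'}]$, so the finite product defining $z$ or $w$ (with integer exponents $v_T$ or $-h_T$) stays bounded in modulus and bounded away from~$0$. Thus the second component lies in a compact annulus of $(\RR^*)^2$, which in particular distinguishes it from the first. The delicate step is this Jacobi identity together with the bookkeeping of the convention on $u_\alpha$ ensuring that a $2iK'$ shift of $u$ halves to an $iK'$ shift inside $\mathrm{sc}$; once matched, the interval bounds on $\mathrm{dn}$ yield the compact-annulus containment immediately.
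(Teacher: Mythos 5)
Your proof is correct and follows essentially the same route as the paper's: show $\psi$ commutes with complex conjugation (evenness of the number of factors cancels the sign from $\overline{i\sqrt{k'}\sc(u)}=-i\sqrt{k'}\sc(\bar u)$), identify the fixed set of conjugation on $\TT(k)$ as the two horizontal circles $\RR/4K\ZZ+\{0,2iK'\}$, and then distinguish them via the location of zeros/poles of $z(u),w(u)$ on the real circle versus the identity $\sc(\cdot+iK')=i\,\nd(\cdot)$ and the bound $\dn\in[k',1]$ on the shifted circle. The only cosmetic difference is that you also invoke Harnack's inequality to cap the component count, whereas the paper gets this for free from the explicit two-circle fixed set; both are fine.
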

\begin{proof}
Since the number of factors in the products defining $z(u)$ and $w(u)$ is even, and since $\sc(\overline{u})=\overline{\sc(u)}$, the map $\psi$
commutes with complex conjugation. As a consequence, 
the real locus of $\Ccal^k$ 
is the image by $\psi$ of the points 
of the torus $\TT(k)$ invariant by
complex conjugation: this is exactly $\RR/4K\ZZ+\{0,2iK'\}$. 
The connected component with ordinate $0$ is unbounded, since it contains the zeros and poles of
$z(u)$ and $w(u)$. On the other one, $z(u)$ and $w(u)$ are bounded away from $0$ and $\infty$.
\end{proof}


%

The parametrization $\psi$ also has consequences on the geometry of the amoeba $\A^k$. 

\begin{lem}\label{lem:tentacle_amoeba}
For every train-track $T$ of $\Gs_1$, the amoeba $\A^k$ has two tentacles, which
are symmetric with respect to the origin; their asymptote is orthogonal to the
vector of coordinates $(h_T, v_T)$ of the homology class $[T]$. Moreover,
every tentacle (counted with multiplicity) arises from a train-track $T$ of $\Gs_1$.
\end{lem}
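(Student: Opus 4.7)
The plan is to leverage the explicit parametrization $\psi:\TT(k)\to\Ccal^k$ of Proposition~\ref{prop:geom_torus}. Tentacles of $\A^k$ correspond, counted with multiplicity, to points $u\in\TT(k)$ where $z(u|k)$ or $w(u|k)$ takes the value $0$ or $\infty$; inspection of \eqref{eq:def_zwofu} shows that these are precisely the $u$ for which some factor $\sc(u_{\alpha_T})$ vanishes or blows up, i.e., $u_{\alpha_T}$ is a zero or a pole of $\sc$ on $\TT(k)$. For each angle $\alpha$ realized by some oriented train-track there are exactly two such points, which I denote $u_\alpha^0$ and $u_\alpha^\infty$.

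Next I would compute the asymptotic direction. Near $u_\alpha^0$, setting $v=u_{\alpha_T}\to 0$ and using $\sc(v)\sim v$, every other factor $\sc(u_{\alpha_{T'}})$ with $\alpha_{T'}\neq\alpha$ stays bounded away from $\{0,\infty\}$, so that
\[
  \bigl(\log|z(u|k)|,\log|w(u|k)|\bigr)=\log|\sc(v)|\cdot(v_T,-h_T)+O(1).
\]
Since $\log|\sc(v)|\to-\infty$, the amoeba point escapes along the direction $(-v_T,h_T)$, whose dot product with the homology vector $(h_T,v_T)$ is zero. A symmetric calculation near $u_\alpha^\infty$, where $\sc$ has a simple pole, yields a tentacle in the opposite direction $(v_T,-h_T)$; combined with the central symmetry of $\A^k$ inherited from the reciprocity of $P_{\Delta^{m(k)}}$ (Lemma~\ref{lem:inclu_Newton_PDelta}), the two tentacles associated to $T$ are centrally symmetric, as claimed.

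For the \emph{moreover} statement, $\psi$ being a complete parametrization by Proposition~\ref{prop:geom_torus}, every tentacle of $\A^k$ is the image of some end of $\TT(k)$, and these ends are exactly the finitely many singular points $u_\alpha^0,u_\alpha^\infty$ enumerated above, each attached to an oriented train-track of angle $\alpha$. The main delicate point, and the principal obstacle I anticipate, is the bookkeeping when several parallel train-tracks $T_1,\dots,T_n$ share an angle $\alpha$: they all contribute simultaneously at $u_\alpha^{0}$ (resp.\ $u_\alpha^\infty$), and the displayed asymptotics become $\log|\sc(v)|\sum_i(v_{T_i},-h_{T_i})=n\log|\sc(v)|\cdot(v_T,-h_T)$, so the resulting geometric tentacle carries multiplicity $n$. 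This matches the general correspondence recalled before Lemma~\ref{lem:tentacle_amoeba}: the side of the Newton polygon normal to $(h_T,v_T)$ has integer length equal to the number of oriented train-tracks parallel to $T$, each contributing one unit of tentacle multiplicity.
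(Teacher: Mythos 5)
Your argument is substantially the same as the paper's: it rests on the parametrization $\psi$ from Proposition~\ref{prop:geom_torus}, identifies the zeros and poles of $z(u)$ and $w(u)$ with train-track parameters, reads local orders off Equation~\eqref{eq:def_zwofu}, and concludes that $(\log\vert z(u)\vert,\log\vert w(u)\vert)$ escapes to infinity along a direction orthogonal to $(h_T,v_T)$, with the central symmetry coming from reciprocity of $P_{\Delta^{m(k)}}$. Your additional bookkeeping for parallel train-tracks sharing an angle is correct --- since non-intersecting closed train-tracks on the torus carry the same primitive homology class, their contributions add to a single tentacle of higher multiplicity, dual to an edge of the Newton polygon of the corresponding lattice length --- and this point is left implicit in the paper's version.
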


\begin{proof}
  From the definition of the parametrization $\psi$, all the zeroes/poles of
  $z(u)$ and $w(u)$ correspond to parameters of the train-tracks.
  Let $T$ be a train-track. Choose an orientation, fixing the parameter
  $\alpha_T\in\RR/4K\ZZ$ and the sign of the homology $(h_T,v_T)$. When $u$ is
  close to $\alpha_T$, there are some non-zero constants $c_1$ and $c_2$ such
  that
  \begin{equation*}
    z(u) = c_1 (u-\alpha_T)^{-v_T}(1+o(1)),\qquad
    w(u) = c_2 (u-\alpha_T)^{h_T}(1+o(1)),
  \end{equation*}
  so that $\log\vert z(u)\vert $ and $\log\vert w(u)\vert $ go to $\pm\infty$ and 
  \begin{equation*}
    h_T \log\vert z(u)\vert + v_T \log\vert w(u)\vert  = h_T \log\vert c_1\vert +v_T\log\vert c_2\vert  +o(1),
  \end{equation*}
  which means exactly that for $u$ close to $\alpha_T$, the unbounded component
  of the boundary has an asymptote with a normal $(h_T, v_T)$.
\end{proof}

\begin{rem}\label{rem:newton_polygon}
  Using Lemma~\ref{lem:tentacle_amoeba} and the duality between the amoeba 
  and the Newton polygon mentioned in the beginning 
  of Section~\ref{sec:spectral_amoeba}, we know that the 
  Newton polygon of $P_{\Delta^{m(k)}}$ is the only convex polygon
  centered at the origin whose boundary consists of the lattice vectors
  representing the homology classes of all the oriented train-tracks of the graph
  $\Gs_1$, in cyclic order. In particular, every vector comes with its opposite,
  corresponding to the same train-track with reverse orientation. For example,
  the Newton polygon of Figure~\ref{fig:amoeba} (right) is obtained from the homology classes of the train-tracks pictured in 
  Figure~\ref{fig:iso_perio} (top right).

\end{rem}

\subsubsection{Spectral curve of the massive Laplacian and genus $1$ Harnack curves}

A curve defined as the zero set of a $2$-variables complex polynomial 
with real coefficients, is said to be \emph{Harnack} if it is maximal, and if
only one of its real connected components meets the coordinate axes (including
the line at infinity), and the intersections with different axes lie on disjoint
arcs of that component~\cite{Viro}.
We now prove that the spectral curve $\Ccal^k$ of the massive Laplacian
is Harnack and that every symmetric, genus $1$ Harnack curve arises in this way.

\begin{thm}
  \label{thm:harnack}
The spectral curve $\Ccal^k$ of the massive Laplacian $\Delta^{m(k)}$ is a Harnack curve.
\end{thm}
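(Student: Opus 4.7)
The plan is to read both parts of the Harnack condition---maximality, and the disjoint-arcs condition for intersections with the coordinate axes---directly off the explicit parametrization $\psi$. Maximality is immediate: by Proposition~\ref{prop:geom_torus} the geometric genus of $\Ccal^k$ is $1$, so a maximal curve has $g+1 = 2$ real connected components, and Lemma~\ref{rem:real_locus} shows that this number is attained. Moreover, by the same lemma the bounded real component $\psi(\RR/4K\ZZ + 2iK')$ has $|z|$ and $|w|$ bounded away from $0$ and $\infty$, and hence misses every coordinate axis (including the line at infinity); so only the unbounded real component $\psi(\RR/4K\ZZ)$ can meet the axes.

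It remains to show that, on this component, the intersections with the different axes lie on disjoint arcs in the cyclic order prescribed by the Newton polygon. Such intersections can only occur at $u = \alpha_T$ and $u = \alpha_T + 2K$ for train-tracks $T$, since elsewhere both $z(u)$ and $w(u)$ are finite and nonzero. From the local expansion recorded in the proof of Lemma~\ref{lem:tentacle_amoeba}, near $u = \alpha_T$ one has $z(u)\sim c_1(u-\alpha_T)^{-v_T}$ and $w(u)\sim c_2(u-\alpha_T)^{h_T}$, so the signs of $v_T$ and $h_T$ encode precisely which axis (or which ``corner'' of the toric compactification, when both are nonzero) is hit. The cyclic ordering of train-tracks recalled in Section~\ref{sec:traintrack_torus}, together with Remark~\ref{rem:newton_polygon}, states that the angles $\alpha_T$ appear along $\RR/4K\ZZ$ in the same cyclic order as the homology vectors $(h_T,v_T)$ around the origin in $\ZZ^2$, which is the cyclic order of the edges of the Newton polygon. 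Thus, as $u$ traverses $\RR/4K\ZZ$ once, the image visits the boundary divisors of the toric compactification associated with the Newton polygon in the cyclic order of its edges, which is the disjoint-arcs condition.

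The main subtlety, and the step I expect to require the most care in writing out, is the treatment of train-tracks $T$ with both $h_T\neq 0$ and $v_T\neq 0$: at such $\alpha_T$ the parametrization runs through a corner of the toric compactification at which two coordinate axes meet, so one must verify that these simultaneous intersections are consistent with---rather than an obstruction to---the disjoint-arcs condition. This is exactly what the matching between the cyclic order of the $\alpha_T$'s on $\RR/4K\ZZ$ and the cyclic order of the edges of the Newton polygon secures: an arc of $\RR/4K\ZZ$ between two consecutive train-tracks corresponds to one edge of the Newton polygon, and the train-track itself to the vertex separating two consecutive edges, so the combinatorics of the toric boundary is reproduced exactly and no arc of intersection with one axis ever ``interleaves'' with that of another.
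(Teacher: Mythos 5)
Your approach coincides with the paper's: maximality from Lemma~\ref{rem:real_locus}, then match the cyclic order of the angles $\alpha_T$ with the cyclic order of the primitive homology vectors $(h_T,v_T)$, read off the parametrization $\psi$ and the expansions in Lemma~\ref{lem:tentacle_amoeba}. The gap is the last inferential step. You assert that visiting the boundary divisors of the toric surface in the cyclic order of the Newton polygon's edges ``is the disjoint-arcs condition,'' but this is not a definitional unwinding: the Harnack definition as stated in the paper refers to the three coordinate axes of $\mathbb{P}^2$ (including the line at infinity), not to the boundary of the Newton polygon's toric compactification, and equating the two requires a theorem. The paper supplies exactly this via Theorem~10 of~\cite{Brugalle}, which says that a maximal real curve parametrized by a real circle is Harnack if and only if, as the parameter moves monotonically, the asymptotic directions of the tentacles of the amoeba occur in monotone cyclic order. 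The ``main subtlety'' you flag---train-tracks with $h_T\neq 0$ and $v_T\neq 0$, producing corners where the parametrization hits two axes at once---is precisely what this criterion absorbs: you do not need (and probably cannot, without reproving the criterion) to disentangle the corner behavior by hand. So the fix is not more care in the corner case but a citation: invoke Brugall\'e's Theorem~10, then your cyclic-order observation closes the argument exactly as in the paper.
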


\begin{proof}
%
The curve $\Ccal^k$ is maximal, by Lemma~\ref{rem:real_locus}.
To prove that it is Harnack it suffices to check,
by Theorem 10 of~\cite{Brugalle},
  that when $u$ runs through $\RR/4K\ZZ$, $(z(u),w(u))$ visits the axes in the
  right order, namely that when $u$ increases, the slopes of the asymptotes of the
  tentacles of the amoeba $\A^k$ are also increasing in the counterclockwise order.
  But, by Lemma~\ref{lem:tentacle_amoeba},
  the slope of an asymptote at $u=\alpha$ is orthogonal to the homology class of
  a train-track with angle $\alpha$. Since the homology classes of the train-tracks
  and the angles associated to them are in the same cyclic order, the property
  is thus satisfied. 
\end{proof}

\begin{thm}\label{thm:Harnack2}
  Every genus $1$ Harnack curve with $(z,w)\leftrightarrow(z^{-1},w^{-1})$ symmetry 
  arises as the spectral curve of the characteristic polynomial of the massive
  Laplacian $\Delta^{m(k)}$ on some periodic isoradial graph for some $k\in(0,1)$.
\end{thm}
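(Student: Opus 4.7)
The plan is to invert the construction of Proposition~\ref{prop:geom_torus}. Starting from a genus~$1$ Harnack curve $\Ccal\subset(\CC^*)^2$ with the symmetry $(z,w)\leftrightarrow(z^{-1},w^{-1})$, I would (i)~identify its normalization with a torus $\TT(k)$ for a unique $k\in(0,1)$; (ii)~read off from the Newton polygon and from the divisors of $z,w$ on $\TT(k)$ the angular and homological data of the train-tracks of an isoradial graph; (iii)~realize these data by a periodic isoradial graph $\Gs$; and (iv)~invoke Proposition~\ref{prop:geom_torus} on $\Gs$ to conclude $\Ccal^k=\Ccal$.

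For step~(i), the normalization is a complex torus $\CC/\Lambda$. Maximality, forced by Harnack-ness, requires the antiholomorphic involution induced by complex conjugation to have two disjoint fixed circles on the normalization, which pins $\Lambda$ down to be a rectangular lattice, uniquely normalizable as $4K\ZZ+4iK'\ZZ$ with $K=K(k)$ and $K'=K(k')$ for some $k\in(0,1)$. The symmetry $(z,w)\leftrightarrow(z^{-1},w^{-1})$ lifts to a holomorphic involution of $\CC/\Lambda$, which after a translation of coordinates becomes $u\mapsto -u$, so that $z(-u)=1/z(u)$ and $w(-u)=1/w(u)$.

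For step~(ii), Remark~\ref{rem:newton_polygon} and the amoeba--Newton polygon duality recover from the centrally symmetric Newton polygon of $\Ccal$ a cyclically ordered multiset of primitive integer vectors $(h_T,v_T)$, to serve as the homology classes of the oriented train-tracks. Pulled back to $\TT(k)$, the coordinates $z$ and $w$ have their zeros and poles at isolated points $\alpha_T$; Harnack-ness places all these points on the real cycle $\RR/4K\ZZ$, and the local analysis of Lemma~\ref{lem:tentacle_amoeba} identifies the pair of orders of $(z,w)$ at $\alpha_T$ with $(-v_T,h_T)$. The converse of the cyclic-ordering argument used in Theorem~\ref{thm:harnack} then ensures that as $u$ runs along $\RR/4K\ZZ$, the $\alpha_T$ appear in the same cyclic order as the associated $(h_T,v_T)$.

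For steps~(iii)--(iv), I would draw on the torus, for each labelled pair $(\alpha_T,(h_T,v_T))$, a self-avoiding cycle of homology $(h_T,v_T)$ with common edge direction $e^{i\overline{\alpha_T}}$. The cyclic matching of step~(ii) guarantees that at every pairwise intersection a properly oriented rhombus can be inserted and that intersections are minimal in number; the bipartite condition on the dual arrangement is automatic, since $u\mapsto -u$ pairs zeros with poles of $z$ and $w$, forcing $\sum_T|v_T|$ and $\sum_T|h_T|$ to be even. By the characterization recalled in Section~\ref{sec:traintrack_torus}, this yields an isoradial graph $\Gs_1$ on the torus, lifting to a periodic isoradial graph $\Gs$. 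Applying Proposition~\ref{prop:geom_torus} to $\Gs$ produces a parametrization $\psi$ of $\Ccal^k$ under which the pull-backs of $z$ and $w$ to $\TT(k)$ have the same divisors as those coming from $\Ccal$; two elliptic functions with equal divisor coincide up to a multiplicative constant, and those constants can be set to $1$ by an appropriate choice of the translation in step~(i), yielding $\Ccal^k=\Ccal$. The main obstacle is the cyclic-ordering assertion inside step~(ii): this converse-Harnack statement, together with the uniqueness of a Harnack curve from its Newton polygon and real intercepts, is the heart of the argument.
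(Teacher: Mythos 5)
Your overall route is exactly the paper's: normalize $\Ccal$ by a torus $\TT(k)$ of pure imaginary modulus, extract train-track data from the divisors of $z$ and $w$ on $\TT(k)$ and from the cyclic order forced by Harnackness, build a periodic isoradial graph, and apply Proposition~\ref{prop:geom_torus}. But there is a genuine error in step~(i), and it breaks the reconstruction in steps~(ii)--(iv).

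The lift of $(z,w)\leftrightarrow(z^{-1},w^{-1})$ to $\TT(k)$ is \emph{not} $u\mapsto -u$. That map reverses the orientation of each real oval $\RR/4K\ZZ$ and $2iK'+\RR/4K\ZZ$, whereas the involution $(z,w)\mapsto(z^{-1},w^{-1})$ \emph{preserves} their orientation: it permutes the coordinate axes (swapping $z=0$ with $z=\infty$ and $w=0$ with $w=\infty$) and hence preserves the cyclic order in which the unbounded oval meets them. A holomorphic involution of $\TT(k)$ preserving each oval together with its orientation must be a translation; being nontrivial, it is translation by a real half-period, i.e.\ $u\mapsto u+2K$. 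This is not cosmetic. If one uses $u\mapsto -u$, the zeros and poles of $z$ (and of $w$) pair up as $\{\alpha,-\alpha\}$, whereas in the correct picture they pair as $\{\alpha,\alpha+2K\}$, exactly the divisor pattern of the factors $i\sqrt{k'}\sc\bigl((u-\alpha)/2\bigr)$. With the wrong pairing, your ``two elliptic functions with equal divisor coincide'' step would produce a function whose divisor does \emph{not} match any product of $\sc$-factors, so the reconstructed $(z(u),w(u))$ would not coincide with the parametrization furnished by Proposition~\ref{prop:geom_torus}, and step~(iv) fails. The correct relation $z(u+2K)=z(u)^{-1}$ (via the identity $\sc(v+K)=-1/(k'\sc(v))$) is also what fixes the multiplicative constants to be $1$, not a further choice of translation as you suggest.

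Two smaller remarks. The evenness needed for bipartiteness does not come from a $u\leftrightarrow -u$ pairing; in the paper it is the parity of $\sum_{j=1}^{\ell}a_j$ and $\sum_{j=1}^{\ell}b_j$ over a half-system of the $\alpha_j$, after using $\alpha_{j+\ell}=\alpha_j+2K$, $a_{j+\ell}=-a_j$, $b_{j+\ell}=-b_j$. And the worry you raise at the end about ``uniqueness of a Harnack curve from its Newton polygon and real intercepts'' is not needed: once $z(u)$ and $w(u)$ are reconstructed explicitly, Proposition~\ref{prop:geom_torus} directly identifies the spectral curve of the constructed graph with the image of $u\mapsto(z(u),w(u))$, with no recourse to a uniqueness theorem for Harnack curves.
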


\begin{proof}
  Let $\Ccal$ be a Harnack curve with geometric genus $1$ and
  $(z,w)\leftrightarrow(z^{-1},w^{-1})$ symmetry.
  Since $\Ccal$ is a genus $1$ maximal real curve, it can be parametrized by a torus of
  pure imaginary modulus~\cite[p.~59]{Natanzon}. This torus, after maybe a
  dilation, is a $\TT(k)$, for
  some $k\in(0,1)$. Let $\psi$ be the birational map from $\TT(k)$ to $\Ccal$. The
  symmetry $(z,w)\leftrightarrow(z^{-1},w^{-1})$ preserves each of the two components of the
  real locus of $\Ccal$, with their orientation. It is thus conjugated by $\psi$ to
  a real translation $u\mapsto u+u_0$ on $\TT(k)$. But since it is a non-trivial
  involution, then $u_0$ is equal to $2K$, the horizontal half-period of the torus
  $\TT(k)$.

  Let us denote by $\alpha_1,\ldots,\alpha_{2\ell}$ the values of
  $u\in\RR/4K\ZZ$ corresponding to a pole or a zero of $z(u)$ or $w(u)$, ordered
  cyclically. For $j\in\{1,\dotsc,\ell\}$, denote by $a_j$ (resp.\ $b_j$) the order of
  $\alpha_j$ in $z(u)$ (resp.\ $w(u)$). Because of the symmetry of the curve,
  we have
  $
 \alpha_{j+\ell}=\alpha_{j},\, a_{j+\ell} = -a_j,b_{j+\ell}=-b_j.
  $
    Moreover, $\sum_{j=1}^{\ell}a_j$ and $\sum_{j=1}^{\ell} b_j$ are even.

  Knowing the zeros and poles of $z(u)$ is enough to reconstruct the whole
  function: $z(u)$ and
  $
    \prod_{j=1}^\ell \sc(u-\alpha_j)^{a_j}
    $
  are meromorphic functions on $\TT(k)$ and have the same zeros and poles, with the same
  multiplicities. Therefore they are equal up to a multiplicative constant.
  The constant is determined by the symmetry $z(u+2K)=z(u)^{-1}$ and the identity
  \eqref{id:scK} for $\sc$; we obtain
  \begin{equation*}
    z(u)=\prod_{i=1}^{\ell} \left(i\sqrt{k'}\sc(u-\alpha_j)\right)^{a_j}.
  \end{equation*}
  The same argument for $w(u)$ yields:
  \begin{equation*}
    w(u)=\prod_{i=1}^{\ell} \left(i\sqrt{k'}\sc(u-\alpha_j)\right)^{b_j}.
  \end{equation*}

  We now want to construct a periodic isoradial graph $\Gs$ (or equivalently an
  isoradial graph $\Gs_1$ on the torus), on which the spectral
  curve of the massive Laplacian is $\Ccal$. First we construct the graph of
  train-tracks ${\GR_1}^*$, as explained in Section~\ref{sec:traintrack_torus},
  by drawing on the torus for every
  $j\in\{1,\dotsc,\ell\}$ a self-avoiding cycle with homology class
  $(b_j,-a_j)$, such that the total number of intersections is minimal.
  The arrangement of the train-tracks is not unique, but because of
  $3$-dimensional consistency (Section~\ref{sec:integrability}), all of them
  should yield the same result.
  The graph ${\GR_1}^*$ determines the graph structure of $\Gs_1$ once we decide
  which is the primal and the dual graph. Now remains to determine the
  embedding, \emph{i.e.}, to attribute to every train-track a direction for the common
  sides of the rhombi on the train-track.

  Every value of $\alpha_j$ corresponds to a tentacle of the amoeba of $\Ccal$, with
  an asymptotic slope given by $(a_j,b_j)$. Since the curve $\Ccal$ is Harnack,
  the slopes of the tentacles are in the same cyclic order as the $\alpha_j$.
  This implies that if we associate to every oriented train-track $T_j$ with
  homology $(b_j,-a_j)$, the unit vector $e^{i\overline{\alpha_j}}$, we can place 
  a rhombus with the correct orientation at each intersection of two
  train-tracks, so that we get a proper isoradial embedding of the graph $\Gs$.
  According to Proposition~\ref{prop:geom_torus}, the spectral curve of the
  massive Laplacian on $\Gs$ for the value of $k$ chosen above, is also
  parametrized by $u\mapsto(z(u),w(u))$, and is therefore equal to $\Ccal$.
\end{proof}

\subsubsection{Consequence of the Harnack property on the amoeba}\label{sec:Amoeba}
 
The spectral curve $\Ccal^k$ has genus $1$ and is Harnack, so
the complement of the amoeba in $\mathbb{R}^2$ has a unique
bounded component, denoted by $D_{\A^k}$.
Since the characteristic polynomial is reciprocal, the amoeba $\A^k$
is invariant under central symmetry about the origin. Therefore,
the component $D_{\A^k}$ contains the origin, and corresponds to the integer point $(0,0)$ 
of the Newton polygon of the characteristic polynomial $P_{\Delta^{m(k)}}$, see Figure~\ref{fig:amoeba}.

The Harnack property also implies that the boundary of the amoeba
coincides with its real locus~\cite{Mikhalkin2}. Combining this with the explicit parametrization of the 
real locus of the spectral curve proved in Lemma~\ref{rem:real_locus} yields the following.

\begin{lem}\label{lem:boundary_amoeba}
The outer boundary of the amoeba is the image by $\Log\circ\psi$ of $\RR/4K\ZZ$.
The boundary of $D_{\A^k}$ is the image by $\Log\circ\,\psi$ of $\RR/4K\ZZ+\{0,2iK'\}$. 
\end{lem}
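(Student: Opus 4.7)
The proof rests on two ingredients recalled just before the lemma. First, for a Harnack curve, Mikhalkin's theorem~\cite{Mikhalkin2} asserts that the boundary of the amoeba $\A^k$ coincides with the image under $\Log$ of the real locus $\Ccal^k_\RR$, and moreover the restriction of $\Log$ to each real oval is a homeomorphism onto one connected component of $\partial \A^k$. Second, by Lemma~\ref{rem:real_locus}, the real locus decomposes into two ovals, $\psi(\RR/4K\ZZ)$ (the ``ordinate-$0$'' component) and $\psi(\RR/4K\ZZ + 2iK')$, the first of which is unbounded in $\CC^2$ and the second bounded away from $0$ and $\infty$.

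Combining these two inputs, the plan is first to conclude
\begin{equation*}
\partial \A^k \;=\; \Log\circ\psi(\RR/4K\ZZ)\;\cup\;\Log\circ\psi(\RR/4K\ZZ + 2iK').
\end{equation*}
The two pieces on the right are connected (continuous images of connected sets) and distinct: $\Log\circ\psi(\RR/4K\ZZ)$ is unbounded, since as $u$ approaches any train-track parameter $\alpha_T$ one has $|z(u)|$ or $|w(u)|\to 0,\infty$ by the tentacle analysis of Lemma~\ref{lem:tentacle_amoeba}; whereas $\Log\circ\psi(\RR/4K\ZZ + 2iK')$ is compact, since the oval $\psi(\RR/4K\ZZ + 2iK')$ is itself compact in $(\CC^\ast)^2$. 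Hence these two pieces are exactly the two connected components of $\partial \A^k$.

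Next I would match each $\Log$-image with the appropriate component of $\partial \A^k$. Since $D_{\A^k}$ is the unique bounded complementary component of the amoeba (as noted at the start of Section~\ref{sec:Amoeba}, a consequence of the Harnack property and of the single interior lattice point in the Newton polygon), $\partial \A^k$ decomposes into exactly two connected pieces: an unbounded outer boundary and the compact $\partial D_{\A^k}$. Matching by boundedness yields $\Log\circ\psi(\RR/4K\ZZ)$ as the outer boundary, and $\Log\circ\psi(\RR/4K\ZZ + 2iK')$ as $\partial D_{\A^k}$, from which both assertions of the lemma follow.

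The only delicate point is the invocation of Mikhalkin's theorem in the precise form needed: namely that on a genus~$1$ Harnack curve the $\Log$-image of each real oval is a \emph{full} boundary component of the amoeba, neither folded onto itself nor spread across both components. This is exactly the homeomorphism statement of Mikhalkin's result, so no further calculation is required once the two ingredients above are combined.
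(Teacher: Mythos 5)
Your proof is correct and follows the paper's intended route: the paper states the lemma without a written proof, with the preceding sentence pointing exactly to Mikhalkin's result (boundary of amoeba $=\Log$ of the real locus, oval by oval) combined with Lemma~\ref{rem:real_locus}, and your matching-by-boundedness argument supplies precisely the missing details. Note also that you correctly read the second assertion as concerning $\RR/4K\ZZ + 2iK'$ only rather than $\RR/4K\ZZ + \{0, 2iK'\}$ as printed --- a typographical slip in the statement, confirmed by the use of the parametrization over $u\in[0,4K]+2iK'$ in the proof of Proposition~\ref{prop:area_hole}.
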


Since the spectral curve is Harnack, we know by~\cite{Mikhalkin2} that the area of the amoeba $\A^k$ is $\pi^2$
times the area of the Newton polygon of $P_{\Delta^{m(k)}}(z,w)$. It is thus independent of $k$ and 
only depends on the geometry of the isoradial graph. A quantity which does depend on the elliptic modulus $k$ is the 
area of the hole $D_{\A^k}$. We now prove the following.

\begin{prop}\label{prop:area_hole}
  As $k$ varies from $0$ to $1$, the area of $D_{\A^k}$ grows continuously from $0$ to
  $\infty$.
\end{prop}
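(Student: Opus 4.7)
The plan is to use the explicit parametrization of $\partial D_{\A^k}$ furnished by Lemma~\ref{lem:boundary_amoeba}, together with Green's theorem, to reduce the problem to the asymptotics of Jacobi elliptic functions at $k\to 0$ and $k\to 1$.

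By Lemma~\ref{lem:boundary_amoeba}, $\partial D_{\A^k}$ is the image of $\RR/4K\ZZ+2iK'$ under $\Log\circ\psi$. Writing $X(t,k)=\log|z(t+2iK'|k)|$ and $Y(t,k)=\log|w(t+2iK'|k)|$, Green's theorem gives
\begin{equation*}
\mathrm{Area}(D_{\A^k})=\left|\int_0^{4K(k)} Y(t,k)\,\partial_t X(t,k)\,dt\right|.
\end{equation*}
Continuity of this quantity for $k\in(0,1)$ is immediate from the smooth dependence of the Jacobi functions $\sc,\dn$ and of the periods $K(k), K'(k)$ on the modulus. For the two endpoint limits I would exploit the imaginary-period identity $\sc(v+iK'|k)=i/\dn(v|k)$: substituting $u\mapsto u+2iK'$ in~\eqref{eq:def_zwofu} replaces each factor $i\sqrt{k'}\,\sc(u_{\alpha_T})$ by a factor of the form $\pm\sqrt{k'}/\dn(\cdot|k)$.

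As $k\to 0$, one has $\sqrt{k'}\to 1$ and $\dn(\cdot|k)\to 1$ uniformly on compact sets, so each such factor tends to a constant; hence $(z(t+2iK'|k),w(t+2iK'|k))$ converges uniformly in $t$ to a single point in $\CC^*\times\CC^*$, the boundary $\partial D_{\A^k}$ collapses, and $\mathrm{Area}(D_{\A^k})\to 0$.

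For $k\to 1$ one uses instead $\dn(v|1)=\mathrm{sech}(v)$ together with $K(k)\sim\log(4/k')\to\infty$; in particular $\sqrt{k'}\,\cosh(K(k))\to\infty$, so $\log|z|$ and $\log|w|$ attain unbounded spans along the inner boundary. The main obstacle is upgrading ``diverging diameter'' to ``diverging enclosed area'': for this I would select two oriented train-tracks $T_1,T_2$ with linearly independent homology classes $(h_{T_i},v_{T_i})$ in $\ZZ^2$ (which exist because the periods of $\Gs$ are genuinely two-dimensional, equivalently the Newton polygon of $P_{\Delta^{m(k)}}$ is not a segment), and show that their contributions force the pair $(\log|z|,\log|w|)$ to diverge in two transverse directions along $\partial D_{\A^k}$, so that $D_{\A^k}$ contains a rectangle of area going to $\infty$. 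Combined with continuity on $(0,1)$, the intermediate value theorem then yields that $\mathrm{Area}(D_{\A^k})$ sweeps continuously through every value in $[0,\infty)$, which is the content of the proposition.
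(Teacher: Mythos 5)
The central gap is that you have not proved monotonicity. The word \emph{grows} in the proposition means that $\mathrm{Area}(D_{\A^k})$ is an \emph{increasing} function of $k$, not merely a continuous function with limits $0$ and $\infty$; indeed, the paper's proof is almost entirely devoted to establishing that the area is increasing in~$k$. Your final sentence reinterprets the claim as ``sweeps continuously through every value in $[0,\infty)$'' via the intermediate value theorem, which is both a weaker conclusion and a misreading of what is being asserted. Continuity plus the two endpoint limits simply do not yield monotonicity without further work, and that missing work is precisely the technical heart of the proposition.

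The paper's actual route is instructive to compare. After writing $\mathrm{Area}(D_{\A^k})=\int_0^{4K}\log|z(u+2iK')|\,\frac{w'(u+2iK')}{w(u+2iK')}\,du$, it substitutes $\sc(u+iK')=i\,\mathrm{nd}(u)$ and expands both factors as sums over train-tracks, turning the area into a double sum over ordered pairs $(S,T)$ of train-tracks. The diagonal terms $S=T$ vanish by an antisymmetry change of variable; pairing each $(S,T)$ with $(T,S)$ produces a contribution $k^2(S\wedge T)\,I(k)$, with $S\wedge T$ the intersection number (whose sign matches the sign of $\alpha_T-\alpha_S$), and $I(k)$ an explicit elliptic integral. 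Using the $\dn$ addition formula and monotonicity of $\frac{\sn\cn}{\dn}$ under the Landen transformation, the paper shows that $I(k)/K(k)$ is non-negative and increasing when $\alpha_T-\alpha_S\in(0,2K)$; this simultaneously gives monotonicity of the area and the limit $I(k)\to\infty$ as $k\to 1$ (since $K(k)\to\infty$), without any separate ``diverging span implies diverging area'' argument. Your $k\to 1$ step, by contrast, correctly observes that the spans $\log|z|$, $\log|w|$ become unbounded (since $\sqrt{k'}\,\nd(K)=1/\sqrt{k'}\to\infty$), but upgrading this to divergence of the enclosed area by selecting two transverse train-tracks is only sketched and would need to be made precise; the paper's pairwise decomposition gets this essentially for free. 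Your $k\to 0$ observation (each factor $\to$ constant, so $\partial D_{\A^k}$ collapses to a point) is sound and matches the paper's remark $I(0)=0$.

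In short: your outline could, with some effort on the $k\to 1$ step, prove a correct but strictly weaker statement (continuity with the stated limits, hence surjectivity onto $[0,\infty)$). To prove the proposition as stated you must additionally show monotonicity, and this requires an argument along the lines of the paper's pairwise decomposition and Landen-transformation monotonicity, not just an endpoint-limit analysis.
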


\begin{proof}
According to Lemma~\ref{lem:boundary_amoeba}, the boundary of $D_{\A^k}$ is parametrized by
$(\log\vert z(u)\vert ,\log\vert w(u)\vert )$, for $u\in[0,4K]+2iK'$.

The area of $D_{\A^k}$ is computed by integrating the form $x\,\ud y$ along the
boundary of $D_{\A^k}$:

\begin{equation}
  \label{eq:area}
  \text{Area}(D_{\A^k}) = \int_{0}^{4K} \log \vert z(u+2iK')\vert 
  \frac{w'(u+2iK')}{w(u+2iK')}\ud u.
\end{equation}


Using the definition \eqref{eq:def_zwofu} of $z(u)$ and $w(u)$, and
the fact that $\sc(u+iK')=i\nd(u)$ \eqref{id:sciKp}, we have
\begin{equation*}
  \log\vert z(u+2iK')\vert = \sum_{S\ \text{train-track }\in\T} v_S
  \log\{\sqrt{k'}\nd(u_{\alpha_S})\}
\end{equation*}
and
\begin{equation*}
      \frac{w'(u+2iK')}{w(u+2iK')}
      = - \sum_{T\ \text{train-track }\in\T} h_T
      \frac{\nd'(u_{\alpha_T})}{\nd(u_{\alpha_T})}
      = - \sum_{T\ \text{train-track }\in\T} h_T k^2
      \frac{\sn \cdot \cn}{\dn}(u_{\alpha_T}).
\end{equation*}
Thus, Equation~\eqref{eq:area} can be rewritten as
\begin{equation*}
       \sum_{S,T\in\T} (-k^2 v_S h_T)
      \int_0^{4K}\log\{\sqrt{k'}\nd(u_{\alpha_S})\}
      \frac{\sn\cdot \cn}{\dn}(u_{\alpha_T}) \ud u.
\end{equation*}

First notice that terms in the sum for which $S=T$ do not contribute to the
sum, by antisymmetry under the change of variable $v=2\alpha_T-u$.

The contribution of the two terms corresponding to the same (unordered) pair of
train-tracks $\{S,T\}$ in the sum is:
%
\begin{equation*}
  k^2 (S\wedge T)
  \int_0^{4K}\log\{\sqrt{k'}\nd(u_{\alpha_T})\}
      \frac{\sn\cdot \cn}{\dn}(u_{\alpha_S}) \ud u,
\end{equation*}
where, by \eqref{equ:intersection}, $S\wedge T  = h_S v_T - v_S h_T$,
equals the number of intersections between $S$ and $T$, with a sign $+$
(resp.\ $-$) if $\alpha_T-\alpha_S\in (0,2K)$ (resp.\ in $(-2K,0)$).

Fix $\alpha$, $\beta$ and $\theta=\frac{\beta-\alpha}{2}$, and define
\begin{equation*}
      I(k)=\int_0^{4K}\log\{\sqrt{k'}\nd(u_{\alpha})\}\frac{\sn\cdot \cn}{\dn}(u_{\beta}) \text{d}u
      =
\frac{2K}{\pi}\int_{0}^{\pi}
      \log \Bigl\{\frac{\dn(\frac{K}{\pi}(-v+2\overline{\theta}))}{\dn(\frac{K}{\pi}(v+2\overline{\theta}))}\Bigr\}
      \frac{\sn\cdot \cn}{\dn}\Bigl(\frac{K}{\pi}v\Bigr) \ud v.
\end{equation*}
To prove that the area \eqref{eq:area} is an increasing function of
$k\in(0,1)$, it is sufficient to prove that if $\bar\theta\in(0,\frac{\pi}{2})$, the
integral $I(k)$
is an increasing function of $k\in(0,1)$.

Using the addition formula for the $\dn$ function \cite[16.17.3]{AS} one can write
\begin{equation*}
     \frac{\dn(\frac{K}{\pi}(-u+2\overline{\theta}))}{\dn(\frac{K}{\pi}(u+2\overline{\theta}))}
     =
     \frac
     {1+k^2\frac{\sn\cdot \cn}{\dn}(\frac{2K\overline{\theta}}{\pi})\frac{\sn\cdot \cn}{\dn}(\frac{Ku}{\pi})}
     {1-k^2\frac{\sn\cdot \cn}{\dn}(\frac{2K\overline{\theta}}{\pi})\frac{\sn\cdot \cn}{\dn}(\frac{Ku}{\pi})}.
\end{equation*}

The function $X\mapsto \log\bigl\{\frac{1+X}{1-X}\bigr\}$ is increasing.
Moreover, for a fixed $v\in[0,\pi]$, the quantity $\frac{\sn\cdot
\cn}{\dn}(\frac{K}{\pi}u\vert k)$ is non-negative and increasing in
$k\in(0,1)$, as can be checked by Landen transformation (see \eqref{eq:alt} in Appendix~\ref{app:elliptic}) and \cite[Figure
2.1]{La89}. Thus the non-negative functions $k\mapsto K(k)$,  
$k\mapsto \frac{\sn\cn}{\dn}(\frac{K u}{\pi})$ and 
$k\mapsto k^2 \frac{\sn\cdot\cn}{\dn}(\frac{K u}{\pi})
  \frac{\sn \cdot \cn}{\dn}(\frac{2 K \overline{\theta}}{\pi})$, are also
  increasing functions of $k$. Thus, so is $I$.

The limits of the area when $k\to 0$ and $k\to 1$ are obtained by noticing
that $I(0)=0$ since the integrand is zero (we recall that $\dn(\cdot\vert
0)=1$). Moreover, we have just proved that $I(k)/K(k)$ is positive and increasing on
$(0,1)$. Since $K(k)$ goes to infinity, when $k\to 1$, so does $I(k)$.
\end{proof}

\subsection{Further properties}
\label{subsec:further}

Since the spectral curve $\Ccal^k$ has geometric genus $1$, the space of
holomorphic differential $1$-forms on $\Ccal^k$ has dimension $1$. 
It turns out that we can explicitly compute one of these forms from the matrix $\Delta^{m(k)}(z,w)$. 
Before doing this, we need two lemmas about the dimension of the kernel of the matrix
$\Delta^{m(k)}(z,w)$ and the structure of the adjugate matrix, denoted $Q^k(z,w)$.

\begin{lem}\label{lem:kernel}
For every $(z,w)\in\Ccal^k$, the dimension of the kernel of the matrix $\Delta^{m(k)}(z,w)$ is:
\begin{equation*}
\mathrm{dim} \big[\ker \Delta^{m(k)}(z(u),w(u))\big]
\begin{cases}
=1&\text{ if $(z,w)$ is a simple point of $\Ccal^k$,}\\
\geq 2&\text{ if $(z,w)$ is a solitary node of $\Ccal^k$}.
\end{cases}
\end{equation*}
Moreover, when $(z,w)$ is a simple point, every $(z,w)$-quasiperiodic massive harmonic function is
  proportional to $\expo_{(\cdot,x_0)}(u)$, where $u\in\TT(k)$ is such that
  $(z(u),w(u))=(z,w)$.
\end{lem}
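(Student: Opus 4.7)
The plan is to control $\ker\Delta^{m(k)}(z,w)$ via the adjugate matrix $Q^k(z,w)$, using the identity
\[
\Delta^{m(k)}(z,w)\, Q^k(z,w) = P_{\Delta^{m(k)}}(z,w)\, I.
\]
On the spectral curve $\Ccal^k$ the right-hand side vanishes, so each column of $Q^k(z,w)$ lies in $\ker\Delta^{m(k)}(z,w)$, and the kernel is automatically of dimension at least one; moreover $\dim\ker\geq 2$ precisely when all $(n-1)$-minors of $\Delta^{m(k)}(z,w)$ vanish, i.e., when $Q^k(z,w)=0$.

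First I would establish that at a simple point $(z,w)\in\Ccal^k$ the kernel is exactly one-dimensional. By Jacobi's formula,
\[
\partial_z P_{\Delta^{m(k)}} = \mathrm{tr}\bigl(Q^k\,\partial_z \Delta^{m(k)}\bigr),\qquad
\partial_w P_{\Delta^{m(k)}} = \mathrm{tr}\bigl(Q^k\,\partial_w \Delta^{m(k)}\bigr),
\]
so if $Q^k(z,w)=0$ then $\nabla P_{\Delta^{m(k)}}(z,w)=0$, contradicting the assumption that $(z,w)$ is a smooth point of $\Ccal^k$. Hence $Q^k(z,w)\neq 0$ and $\dim\ker\Delta^{m(k)}(z,w)=1$. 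The function $\expo_{(\cdot,x_0)}(u)$, with $\psi(u)=(z,w)$, is a non-zero element of the kernel (the computation in the proof of Proposition~\ref{prop:geom_torus} already showed this), so every $(z,w)$-quasiperiodic massive harmonic function must be a scalar multiple of it.

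For the solitary-node case, I would invoke the fact that $\psi:\TT(k)\to\Ccal^k$ is the normalization of $\Ccal^k$, an irreducible genus-$1$ curve by Proposition~\ref{prop:geom_torus}. A solitary node is an ordinary double point where two complex-conjugate smooth branches meet (the Harnack condition rules out more exotic singularities), so $\psi$ has two preimages $u_1$ and $u_2=\overline{u_1}$ above $(z,w)$. The two exponentials $\expo_{(\cdot,x_0)}(u_1)$ and $\expo_{(\cdot,x_0)}(u_2)$ both belong to $\ker\Delta^{m(k)}(z,w)$, and they are linearly independent: proportionality would force $u_1=u_2$, since the explicit product formula~\eqref{eq:def_zwofu} shows that $\expo_{(y,x_0)}(\cdot)$ is a non-constant meromorphic function of $u$ for a suitable choice of vertex $y$. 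Therefore $\dim\ker\Delta^{m(k)}(z,w)\geq 2$.

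The main obstacle is the solitary-node case: it requires the geometric input that the normalization $\psi$ really has two distinct preimages at such a point, together with the linear-independence verification of the corresponding quasiperiodic exponentials. The simple-point analysis, by contrast, is routine linear algebra once the adjugate and Jacobi's formula are in play.
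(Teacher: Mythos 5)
Your proposal follows essentially the same route as the paper's own proof: both hinge on the adjugate identity $Q^k\Delta^{m(k)}=P_{\Delta^{m(k)}}\mathrm{Id}$, deduce that $\dim\ker\geq 2$ forces $Q^k(z,w)=0$, and then derive a contradiction with simplicity. The one genuine variation is in how you reach that contradiction: you invoke Jacobi's formula $\partial_z P=\operatorname{tr}(Q^k\,\partial_z\Delta^{m(k)})$ to conclude directly that $Q^k(z,w)=0$ would force $\nabla P(z,w)=0$, whereas the paper differentiates the full matrix identity and observes that $\frac{\partial Q^k}{\partial z}\Delta^{m(k)}(z,w)$, having rank $\leq n-1$, cannot equal a nonzero multiple of the identity. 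Both arguments are correct; yours is a touch more economical.

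One point to flag in the solitary-node case: your justification for linear independence of $\expo_{(\cdot,x_0)}(u_1)$ and $\expo_{(\cdot,x_0)}(u_2)$ is too weak as stated. You argue that proportionality (which forces the constant to be $1$, by evaluating at $x_0$) would give $\expo_{(y,x_0)}(u_1)=\expo_{(y,x_0)}(u_2)$ for all $y$, and then conclude $u_1=u_2$ from the \emph{non-constancy} of $\expo_{(y,x_0)}(\cdot)$. But a non-constant meromorphic function on the torus $\TT(k)$ has degree $\geq 2$ and is therefore never injective, so non-constancy of a single such function cannot force $u_1=u_2$. What is actually needed is that the family $\{u\mapsto\expo_{(y,x_0)}(u)\}_{y}$ \emph{separates points} of $\TT(k)$ (or at least the relevant pair $u\neq\overline{u}$); this is true but not a consequence of mere non-constancy. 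To be fair, the paper itself merely asserts that the two exponentials ``are not proportional'' without further argument, so you are no worse off than the source -- but the specific reasoning you supply does not close the gap and should be replaced by either an outright assertion or a genuine separation-of-points argument.
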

\begin{proof}
  Let $u\in \TT(k)$.
  We have seen in the proof of Proposition~\ref{prop:geom_torus} that the function $\expo_{(\cdot,x_0)}(u)$ is 
  a non-zero $(z(u),w(u))$-quasiperiodic and massive harmonic function.
  Therefore, it is in the kernel of $\Delta^m(z(u),w(u))$.

Suppose that $(z,w)$ is a simple point of $\Ccal$.
  The fact that the kernel of $P_{\Delta^m}(z,w)$ has dimension $1$ for a simple
  point $(z,w)$ follows from \cite{CookThomas}. Let us quickly recall the argument here.
The following identity holds for every $z'$ and $w'$):
  \begin{equation}
    \label{eq:comatrix_det}
    Q(z',w')\Delta^m(z',w')  = P_{\Delta^m}(z',w')\cdot \text{Id}.
  \end{equation}
Since the point $(z,w)$ is simple,
  $(\frac{\partial P}{\partial z}(z,w),\frac{\partial P}{\partial w}(z,w))\neq(0,0)$.
  Suppose we have $\frac{\partial P}{\partial z}(z,w)\neq 0$.
  Differentiating~\eqref{eq:comatrix_det} with respect to $z'$, and evaluating
  at $(z',w')=(z,w)$, we get:
  \begin{equation*}
    \frac{\partial Q(z,w)}{\partial z} \Delta^m(z,w) + 
    Q(z,w) \frac{\partial \Delta^{m}(z,w)}{\partial z} =
    \frac{\partial P_{\Delta^m}(z,w)}{\partial z}  \text{Id}.
  \end{equation*}
  If $\ker\Delta^m(z,w)$ had dimension strictly greater than $1$, the matrix $Q(z,w)$
  would be identically zero. But $\frac{\partial Q(z,w)}{\partial z} \Delta^m(z,w)$ cannot
  be equal to a non-zero multiple of the identity, because $(z,w)$ is on the
  curve $\Ccal^k$ and thus $\Delta^{m}(z,w)$ is non-invertible.
  Therefore $\mathrm{dim} \ker\Delta^m(z,w)=1$, and if $u\in \TT(k)$ is such
  that $(z(u), w(u))=(z,w)$, then by the remark above, the function $\expo_{(\cdot,x_0)}(u)$
  spans the kernel of $\Delta^m(z,w)$.
  
  
  Suppose now that $(z,w)$ is a solitary node of the curve. This point has
  two\footnote{Since the spectral curve is Harnack, $u$ and 
  $\overline{u}$ are the only two points of $\TT(k)$ giving the value $(z,w)$.}
  distinct conjugated preimages $u\neq\overline{u}$ by $\psi$ on $\TT(k)$.
  The two functions $\exp_{(\cdot,x_0)}(u)$ and
  $\exp_{(\cdot,x_0)}(\overline{u})$ are in the kernel of $\Delta^m(z,w)$, but
  are not proportional.
  The kernel of $\Delta^m(z,w)$ is thus at least two-dimensional. 
  \end{proof}

\begin{lem}
  \label{lem:def_g}
  There exists a meromorphic function $g^k$ on $\TT(k)$
  such that:
  \begin{equation*}
    \forall\, u\in\TT(k),\ 
    \forall\, x,y\text{ vertices of }\Gs_1,\quad
    Q_{x,y}^k(z(u),w(u)) = g^k(u) \expo_{(x,y)}(u).
  \end{equation*}
  In particular, $g^k(u)$ is the diagonal coefficient $Q_{x,x}^k(u)$ for every
  vertex $x$ of $\Gs_1$. When $u$ is such that $(z(u),w(u))$ is a solitary node, we have $g^k(u)=0$. 
\end{lem}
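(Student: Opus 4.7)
The plan is to exploit the adjugate identity
\begin{equation*}
\Delta^{m(k)}(z,w)\,Q^k(z,w) \;=\; Q^k(z,w)\,\Delta^{m(k)}(z,w) \;=\; P_{\Delta^{m(k)}}(z,w)\cdot\mathrm{Id},
\end{equation*}
which, restricted to a point $(z,w)\in\Ccal^k$, forces every column of $Q^k(z,w)$ to lie in $\ker\Delta^{m(k)}(z,w)$ and every row to lie in its left kernel. By Lemma~\ref{lem:kernel}, at a simple point $(z(u),w(u))$ of $\Ccal^k$ the kernel is one-dimensional and spanned by $\expo_{(\cdot,x_0)}(u)$ for any fixed reference vertex $x_0\in\Vs_1$. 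This yields a column factorization
\begin{equation*}
Q^k_{x,y}(z(u),w(u)) \;=\; \mu_y(u)\,\expo_{(x,x_0)}(u),
\end{equation*}
and evaluating at $x=x_0$ (using $\expo_{(x_0,x_0)}(u)=1$) identifies $\mu_y(u)=Q^k_{x_0,y}(z(u),w(u))$.

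For the row analysis, taking the transpose of the adjugate identity and using $\Delta^{m(k)}(z,w)^T=\Delta^{m(k)}(z^{-1},w^{-1})$ together with the reciprocity of $P_{\Delta^{m(k)}}$ from Lemma~\ref{lem:inclu_Newton_PDelta} yields $Q^k(z,w)^T=Q^k(z^{-1},w^{-1})$. Denoting by $u\mapsto u^\ast$ the involution on $\TT(k)$ induced by the $(z,w)\leftrightarrow(z^{-1},w^{-1})$ symmetry of $\Ccal^k$ (identified as $u^\ast=u+2K$ in the proof of Theorem~\ref{thm:Harnack2}), this gives $Q^k_{x,y}(z(u),w(u))=Q^k_{y,x}(z(u^\ast),w(u^\ast))$. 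Applying the column factorization above at the parameter $u^\ast$ to the right-hand side, and then comparing with the column factorization at $u$, shows that
\begin{equation*}
g^k(u) \;:=\; \frac{Q^k_{x,y}(z(u),w(u))}{\expo_{(x,x_0)}(u)\,\expo_{(y,x_0)}(u^\ast)}
\end{equation*}
depends neither on $x$ nor on $y$. Invoking the symmetry $\expo_{(y,x_0)}(u+2K)=\expo_{(x_0,y)}(u)$ of the discrete massive exponentials (a direct consequence of the elementary identity $i\sqrt{k'}\,\sc(\cdot+K)=(i\sqrt{k'}\,\sc)^{-1}$ underlying $z(u+2K)=z(u)^{-1}$), together with the multiplicativity $\expo_{(x,x_0)}(u)\,\expo_{(x_0,y)}(u)=\expo_{(x,y)}(u)$, one obtains the desired identity $Q^k_{x,y}(z(u),w(u))=g^k(u)\,\expo_{(x,y)}(u)$ on the open dense subset of $u\in\TT(k)$ parametrizing simple points of $\Ccal^k$.

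Both sides being meromorphic in $u$ on $\TT(k)$ (the entries of $Q^k(z,w)$ are polynomials in $(z,w)$ and $(z(u),w(u))$ is meromorphic), the identity extends to the whole of $\TT(k)$. Specializing to $x=y$ gives $g^k(u)=Q^k_{x,x}(z(u),w(u))$ for every vertex $x$, which simultaneously proves the diagonal statement and shows that $g^k$ does not depend on the auxiliary choice of $x_0$. Finally, at a solitary node, Lemma~\ref{lem:kernel} gives $\dim\ker\Delta^{m(k)}(z,w)\geq 2$; the matrix then has corank at least two, so every $(|\Vs_1|-1)\times(|\Vs_1|-1)$ minor vanishes, whence $Q^k(z,w)=0$ there and $g^k(u)=0$.

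The main obstacle is the identification of the row/left-kernel generator, that is, verifying that the transposed factorization really produces the pairing needed to collapse $\expo_{(x,x_0)}(u)\,\expo_{(y,x_0)}(u^\ast)$ into $\expo_{(x,y)}(u)$. This reduces to pinning down the involution induced by $(z,w)\leftrightarrow(z^{-1},w^{-1})$ on $\TT(k)$ as $u\mapsto u+2K$ and checking that the elementary factors appearing in the path-product defining $\expo_{(x,y)}(u)$ invert under that shift; the rest of the argument is routine linear algebra and meromorphic continuation.
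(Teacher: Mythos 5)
Your proof is correct and follows essentially the same route as the paper's: both exploit the rank-one structure of $Q^k$ at simple points coming from $\dim\ker\Delta^{m(k)}(z(u),w(u))=1$, identify the kernel with the span of $\expo_{(\cdot,x_0)}(u)$ and the cokernel (via the reciprocity/transpose symmetry and the shift $u\mapsto u+2K$) with the span of $\expo_{(x_0,\cdot)}(u)$, then collapse the two factors by multiplicativity and extend by meromorphy, with the solitary-node case handled by corank $\geq 2$ forcing $Q^k=0$. The paper states this more compactly as a factorization $Q^k=V\,W^T$ with $V\in\ker$, $W\in\coker$; your chain of column factorizations applied at $u$ and $u+2K$ is the same argument unpacked.
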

\begin{proof}Let $u\in\TT(k)$. 
  Suppose that $(z(u),w(u))$ is not a solitary node of $\Ccal^k$.
  Then, by Lemma~\ref{lem:kernel}, $\mathrm{dim}\ker\Delta^m(z(u),w(u))=1$, implying that $Q(z(u),w(u))$ has
  rank~$1$, and can be written
  \begin{equation*}
    Q(z(u),w(u))= V\cdot W^T,
  \end{equation*}
  with $V\in \ker\Delta^m(z(u),w(u))$ and $W\in
  \coker\Delta^m(z(u),w(u))=\ker\Delta(z(u)^{-1},w(u)^{-1})=\ker
  \Delta^m(z(u+2K),w(u+2K))$.
  So $V$ and $W$ are (non-zero) multiples of $\expo_{(\cdot,x_0)}(u)$ and 
  $\expo_{(\cdot,x_0)}(u+2K)=\expo_{(x_0,\cdot)}(u)$ respectively.
  Therefore there exists a non-zero coefficient $g(u)$ such that for any vertices 
  $x$ and $y$ of $\Gs_1$,
  \begin{equation*}
    Q_{x,y}(z(u),w(u)) = V_x W_y
    = g(u) \expo_{(x,x_0)}(u) \expo_{(x_0,y)}(u)
    = g(u) \expo_{(x,y)}(u).
  \end{equation*}
  For $x=y$, we get $g(u)=Q_{x,x}(z(u), w(u))$, which is
  meromorphic as the composition of a polynomial with meromorphic functions.
  In particular, $Q_{x,x}(z(u),w(u))$ does not depend on $x$.
  If $u$ corresponds to a solitary node, $Q(z(u),w(u))$ vanishes because the kernel of
  $\mathrm{dim}\ker\Delta^m(z,w)\geq 2$. Therefore, $g$
  extends analytically to $u$, and $g(u)=0$.
\end{proof}

Since the spectral curve has geometric genus $1$, the space of
holomorphic differential $1$-forms on $\Ccal^k$ has dimension $1$. 
The next proposition states that we can explicitly compute one of these forms
using the matrices $\Delta^{m(k)}(z,w)$ and $Q^k(z,w)$.
Any other holomorphic 1-form is a multiple of this one.

\begin{prop}\label{lem:holomorphic_form}
The differential form
$\displaystyle\frac{Q_{x,x}^k(z,w)}{\frac{\partial P_{\Delta^{m(k)}}}{\partial w}(z,w)w z}\ud z$
is a holomorphic $1$-form on $\Ccal^k$.
\end{prop}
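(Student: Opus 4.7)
The plan is to pull the proposed $1$-form back along the parametrization $\psi:\TT(k)\to\Ccal^k$ from Proposition~\ref{prop:geom_torus} and to show that the resulting meromorphic $1$-form on the compact Riemann surface $\TT(k)$ is everywhere holomorphic; since $\psi$ is the normalization of $\Ccal^k$, this is exactly what is required. Using Lemma~\ref{lem:def_g} to identify $Q_{x,x}^k(z(u),w(u))=g^k(u)$, the pullback is
\[
\eta:=\frac{g^k(u)\,z'(u)}{\frac{\partial P_{\Delta^{m(k)}}}{\partial w}(z(u),w(u))\,z(u)w(u)}\,\ud u.
\]
Differentiating $P_{\Delta^{m(k)}}(z(u),w(u))\equiv 0$ yields the tangent relation $\frac{\partial P}{\partial z}z'(u)+\frac{\partial P}{\partial w}w'(u)=0$, so $\eta$ can equivalently be written as $-\frac{g^k(u)\,w'(u)\,\ud u}{\frac{\partial P}{\partial z}(z(u),w(u))\,z(u)w(u)}$; one of these two expressions will be used at each $u_0\in\TT(k)$ to verify the absence of a pole.

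At $u_0$ with $\psi(u_0)$ a smooth point of $\Ccal^k$ in $(\CC^*)^2$, the map $\psi$ is a local biholomorphism so $(z'(u_0),w'(u_0))\neq 0$, and by smoothness at least one of $\frac{\partial P}{\partial z},\frac{\partial P}{\partial w}$ is non-vanishing there; choosing the expression whose denominator does not vanish gives a finite value. At $u_0$ over a solitary node, both partial derivatives of $P$ vanish at $\psi(u_0)$, but $\frac{\partial P}{\partial w}(z(u),w(u))$ and $\frac{\partial P}{\partial z}(z(u),w(u))$ vanish only to first order at $u_0$ (the node is an ordinary double point and $\psi$ is non-degenerate along each local branch); Lemma~\ref{lem:def_g} provides the compensating simple zero $g^k(u_0)=0$, and $\eta$ extends holomorphically.

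The delicate case is $u_0=\alpha_T$, a train-track parameter, at which Lemma~\ref{lem:tentacle_amoeba} gives $z(u)\sim c_1(u-\alpha_T)^{-v_T}$ and $w(u)\sim c_2(u-\alpha_T)^{h_T}$. Let $M_T$ be the maximum of the linear form $(i,j)\mapsto v_Ti-h_Tj$ over the Newton polygon of $P_{\Delta^{m(k)}}$; it is attained on the edge with outward normal $(v_T,-h_T)$. Expanding $\frac{\partial P}{\partial w}$ along that edge, a direct computation gives the local behaviour $\eta\sim\mathrm{const}\cdot g^k(u)(u-\alpha_T)^{M_T-1}\,\ud u$. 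Because $(0,0)$ lies in the interior of the Newton polygon (the unique bounded component of the amoeba contains the origin by reciprocity of $P$, see Section~\ref{sec:Amoeba}), $M_T\geq 1$; a parallel Newton-polygon control on the cofactor $Q_{x,x}^k$ bounding the order of any pole of $g^k$ at $\alpha_T$ by $M_T-1$ then yields holomorphicity. The main obstacle is precisely this last bound on $Q_{x,x}^k$: it encodes the adjoint-like condition linking the Harnack/interior-lattice structure of the Newton polygon to the existence of the holomorphic differential, and requires an analysis of the adjugate $Q^k(z,w)$ finer than what Lemma~\ref{lem:inclu_Newton_PDelta} provides for $P_{\Delta^{m(k)}}$ itself.
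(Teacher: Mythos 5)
Your pullback strategy is a legitimate alternative to the paper's route (which invokes the classical description of holomorphic differentials on a plane curve as $R\,\ud z/\tfrac{\partial P}{\partial w}$ with $R$ adjoint of degree $\leq \deg P - 3$, then checks those conditions for $R = Q_{x,x}/zw$), and you correctly reduce the problem to the same two points the paper checks: vanishing of $Q_{x,x}$ at solitary nodes via Lemma~\ref{lem:def_g}, and a degree bound on $Q_{x,x}$. But you stop exactly at the step you flag as ``the main obstacle,'' namely the bound on the pole order of $g^k(u)=Q_{x,x}(z(u),w(u))$ at the train-track parameters $\alpha_T$, and you do not supply it. That is a genuine gap, not a routine verification you are entitled to wave through.

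The paper closes this gap without any finer adjugate theory than Lemma~\ref{lem:inclu_Newton_PDelta} already contains, because $Q_{x,x}(z,w)$ is itself the determinant of $\Delta^{m}(z,w)$ with the row and column indexed by $x$ deleted, so the \emph{same} monomial-degree bookkeeping applies to it. Concretely, work with the paths $(\gamma_x,\gamma)$ of Corollary~\ref{cor:control_degree_P} (so $[\gamma]=-[\gamma_x]+[\gamma_y]$ and the relevant width of the Newton polygon is the total degree), and choose the vertex $x$ to lie on $\gamma$. Expanding the $(|\Vs_1|-1)\times(|\Vs_1|-1)$ minor as a sum over bijections, a monomial can only acquire a factor of the corresponding variable when a vertex of $\gamma$ is mapped across the boundary path, and with $x$ removed there is one fewer such vertex available than in the expansion of $P_{\Delta^m}$. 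Hence the maximal total degree of $Q_{x,x}$ is strictly less than that of $P_{\Delta^m}$, and dividing by $zw$ gives a (Laurent) polynomial of degree at most $\deg P_{\Delta^m}-3$, which is precisely the bound you needed at the tentacles (your $M_T-1$ estimate). So the missing ingredient is not a sharper theory of the adjugate; it is just Lemma~\ref{lem:inclu_Newton_PDelta}'s argument rerun on a minor, with the vertex $x$ placed on the minimal path.
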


\begin{proof}
  According to classical theory of algebraic curves~\cite{ArbaCornGrifHarr}, all holomorphic
  differential forms on $\Ccal^k$ are of the form
  \begin{equation*}
   \frac{R(z,w)}{\frac{\partial P_{\Delta^{m}}}{\partial w}(z,w)}\ud z ,
  \end{equation*}
  with $R$ a polynomial of degree not greater than $\deg P_{\Delta^m} -3$ and vanishing on
  solitary nodes of the curve $\Ccal^k$.

  Let us prove that the polynomial $R(z,w)=Q_{x,x}(z,w)/zw$ satisfies these two
  properties. The fact that $R$ vanishes on nodes is a consequence of
  Lemma~\ref{lem:def_g}.

  To control the degree of $Q_{x,x}$, we apply the argument of the proof of Lemma~\ref{lem:inclu_Newton_PDelta}
  to $Q_{x,x}$, for paths $(\gamma_x,\gamma)$ as in
  Corollary~\ref{cor:control_degree_P}. Recall that $Q_{x,x}(z,w)$ is
  computed as the determinant of the matrix $\Delta^m(z,w)$ from which the row and the column
  indexed by $x$ are removed, and recall that any choice for the vertex $x$ yields the same
  result. If $x$ is a vertex of $\Gs_1$ visited by $\gamma$, then the degree
  counting argument in the variable $z$ (along $\gamma$) in the expansion of the
  determinant of the minor of $\Delta^m(z,w)$ shows that the maximal degree of
  $Q_{x,x}(z,w)$ is strictly less than that of $P_{\Delta^m}(z,w)$. When dividing
  by $zw$, we get a polynomial of degree not higher than $\deg P_{\Delta^m}-3$.
\end{proof}

\subsection{Green function on periodic isoradial graphs}
\label{sec:perio_green}

When the graph $\Gs$ is periodic, the massive Green function $G^m$ can be expressed 
as a double integral involving the Fourier transform. Indeed, the matrix
$\Delta^m(z,w)$ is invertible for
generic values of $z$ and $w$, and the Green function is obtained as Fourier
coefficients of ${\Delta^m(z,w)}^{-1}$: if $x$ and $y$ are two vertices of $\Gs_1$
and $(m,n)\in\mathbb{Z}^2$,

\begin{align}
  \label{eq:periodic_Green}
  G^m(x+(m,n),y)&= \iint_{\vert z\vert =\vert w\vert =1} z^{-m}w^{-n}  ({\Delta^m(z,w)}^{-1})_{x,y}\,
    \frac{\ud z}{2i\pi z}
    \frac{\ud w}{2i\pi w}\nonumber\\
    &= \iint_{\vert z\vert =\vert w\vert =1} z^{-m} w^{-n} \frac{{Q_{x,y}(z,w)}}{P_{\Delta^m}(z,w)}
    \frac{\ud z}{2i\pi z}
    \frac{\ud w}{2i\pi w}.
\end{align}


In Section~\ref{sec:recovLocal}, we give an alternative proof of the local
formula \eqref{eq:green} of Theorem~\ref{thm:expression_Green}
for the massive Green function $G^m(x,y)$, starting
from the double integral formula of Equation~\eqref{eq:periodic_Green}. 

In Section~\ref{sec:recovAsympt}, we explain how to recover asymptotics of the Green function of Theorem~\ref{thm:asymp_Green}
from the double integral formula of Equation~\eqref{eq:periodic_Green}, using the approach of \cite{PeWi13}. This yields a
geometric interpretation of the exponential rate of decay in terms of the amoeba~$\A^k$.

  
\subsubsection{Recovering the local formula for the massive Green function}\label{sec:recovLocal}

We now give an alternative proof of the local formula \eqref{eq:green} for the
massive Green function, starting from the double integral formula 
\eqref{eq:periodic_Green}. We can assume that $x+(m,n)$ is ``below'' the vertex
$y$, \emph{i.e.}, that $n \leq -1$, after maybe having to change the boundary of
the fundamental domain, and exchanging the axes and their directions.

We first transform the integral~\eqref{eq:periodic_Green} defining $G^m(x+(m,n),y)$, by computing at
fixed $z$ the integral over $w$ by residues. For a generic value of $z$ on the
unit circle $\mathbb S^1$, the function $P_{\Delta^m}(z,\cdot)$ has $2d_x$ distinct
non-zero roots, which all have modulus different from $1$, because $(0,0)$ is not
in the amoeba of $\Ccal^k$. Since $P_{\Delta^m}$ is reciprocal, if $w(z)$ is a root, then $\overline{w(z)}^{-1}$ is also a root, 
meaning that $d_x$ of them are inside the unit disk: $w_1(z),\dots,w_{d_x}(z)$,
and $d_x$ of them outside: $w_{d_x+1}(z),\dotsc,w_{2d_x}(z)$.
Since $n\leq -1$, there is no pole at $0$, and by application of
the residue theorem,
\begin{equation*}
  \int_{\vert w\vert =1} \frac{Q_{x,y}(z,w)}{P_{\Delta^m}(z,w)}w^{-n-1}\frac{\ud w}{2i\pi} =
  \sum_{i=1}^{d_x} \frac{Q_{x,y}(z,w_{i}(z))}{\frac{\partial P_{\Delta^m}(z,w)}{\partial
  w}(z,w_i(z))} w_i(z)^{-n-1}.
\end{equation*}

In the remaining integral over $z$ of Equation~\eqref{eq:periodic_Green}, we
perform the change of variable from $z$ to
$u\in\TT(k)$. There are on the spectral curve $\Ccal^k$ two disjoint simple
paths on which the first coordinate $z$ is in the unit circle. They project onto
the amoeba to two vertical segments obtained as the intersection of the
amoeba and the vertical axis $x=0$, one of the two segments is below the horizontal axis, the other one is above.
The preimage by $\psi:u\mapsto(z(u),w(u))$ of those two segments are two ``vertical'' loops on $\TT(k)$, 
denoted by $\Gamma$ and $\Gamma'$, respectively.
The loops $\Gamma$ and $\Gamma'$ are assumed to be oriented in such a way that when $u$ moves in the
positive direction, $z(u)$ winds
counterclockwise around the unit circle.
The map $u\in\Gamma\mapsto z(u)\in \mathbb S^1$ has degree $d_x$:
along $\Gamma$, there are exactly $d_x$ values of
$u$ such that $z(u)=z$, the corresponding value of $w(u)$ being equal to one of
the $w_i(z)$, $i\in\{1,\dotsc,d_x\}$. We can therefore rewrite
\begin{equation*}
  \int_{\vert z\vert =1} \sum_{i=1}^{d_x} f(z,w_i(z))\ud z =
  \oint_{u\in\Gamma} f(z(u),w(u))z'(u) \ud u,
\end{equation*}
for any measurable function $f$. In particular, for
$f(z,w)=z^{-m-1}w^{-n-1}\frac{Q_{x,y}(z,w)}{\frac{\partial P_{\Delta^m}}{\partial
w}(z,w)}$, one gets the following expression for the massive Green function:
\begin{equation*}
  G^m(x+(m,n), y) = \oint_{u\in\Gamma} z(u)^{-m}w(u)^{-n}
  \frac{Q_{x,y}(z(u),w(u))}{z(u) w(u) \frac{\partial P_{\Delta^m}}{\partial w}(z(u),w(u))}
  z'(u)\frac{\ud u}{2i\pi}.
\end{equation*}
But according to Lemma~\ref{lem:def_g} and Equation~\eqref{eq:expo_perio},
\begin{equation*}
  z(u)^{-m}w(u)^{-n}Q_{x,y}(z(u),w(u)) = \expo_{(x+(m,n),y)}(u) g(u),
\end{equation*}
and the differential form
\begin{equation*}
  \frac{g(u)}{z(u)w(u) \frac{\partial P_{\Delta^m}}{\partial w}(z(u),w(u)} z'(u)\ud u
\end{equation*}
is the pullback by the biholomorphic map $\psi$ of the holomorphic $1$-form defined
in Lemma~\ref{lem:holomorphic_form}. Therefore it is
a holomorphic form on $\TT(k)$, and as such,
  equal to $\ud u$, up to a
multiplicative constant $A$ to be determined by other means:
\begin{equation*}
 G^m(x+(m,n), y) =
 A\times\oint_{\Gamma} \expo_{(x+(m,n),y)}(u)\ud u.
\end{equation*}
One then checks that the position of the contour $\Gamma$ with respect to the
poles of the exponential function is indeed the one described in
Theorem~\ref{thm:expression_Green}. In order to determine the numerical value of the constant $A$, one needs to
compute the Laplacian of the Green function $G^m(\cdot,y)$ at the vertex
$y$, as in the proof of Theorem~\ref{thm:expression_Green}.

\subsubsection{Recovering asymptotics of the massive Green function}
\label{sec:recovAsympt}

Let $x$ and $y$ be two vertices of the fundamental domain $\Gs_1$, and let
$(m,n)\in\ZZ^2$. We now explain how to recover the asymptotics formula of Theorem~\ref{thm:asymp_Green}.
In the periodic case, we can let the vertex $x+(m,n)$ tend to
  infinity with an asymptotic direction:
for $\mathbf{r}=(m,n)\in\ZZ^2\setminus\{0,0\}$, denote by $\widehat{\mathbf{r}}$ the
unit vector in the direction of $\mathbf{r}$, and $\vert \mathbf{r}\vert $ its
norm. The asymptotic regime we consider corresponds to $|\mathbf{r}|\to\infty$
and $\widehat{\mathbf{r}}\to\widehat{\mathbf{r}}^*$, where $\widehat{\mathbf{r}}^*$ is a
fixed direction.

The double integral formula~\eqref{eq:periodic_Green}
is the coefficient $a_{\mathbf{r}}=a_{m,n}$ of $z^m w^n$ in the
(multivariate) series expansion of the
rational fraction $\frac{Q_{x,y}}{P_{\Delta^m}}$ in a neighborhood of
$\vert z\vert =\vert w\vert =1$, and the domain of convergence of this expansion is exactly the set
\begin{equation*}
  \Log^{-1}(D_{\A^k})=\{(z,w):  (\log\vert z\vert ,\log\vert w\vert )\in D_{\A^k}\}.
\end{equation*}

In particular, the general term $a_{m,n} z^{m} w^{n}$ should go to zero for
$\Log(z,w)\in D_{\A^k}$, and should be unbounded if $\Log(z,w)$ is in the interior
of the amoeba.

%

Define the exponential rate of
the series coefficients $(a_{\mathbf{r}})$ in the direction
$\widehat{\mathbf{r}}_*$ as in~\cite{PeWi13}:
\begin{equation*}
  \overline{\beta}(\widehat{\mathbf{r}}_*) =
  \inf_{\mathcal{N}}\limsup_{
    \substack{\mathbf{r}\to\infty,\\
    \widehat{\mathbf{r}}\in \mathcal{N}}
  } \vert \mathbf{r}\vert ^{-1} \log \vert a_{\mathbf{r}}\vert ,
\end{equation*}
where $\mathcal{N}$ varies over a system of open neighborhoods of
$\widehat{\mathbf{r}}_*$ whose intersection is the singleton~$\{\widehat{\mathbf{r}}_*\}$.
Then, for every $\widehat{\mathbf{r}}$,
$
  \overline\beta(\widehat{\mathbf{r}})=
  \inf\{-  \widehat{\mathbf{r}}\cdot\mathbf{s}: \mathbf{s}\in D_{\A^k}\}$,
see~\cite[Chapter~8]{PeWi13}. The compact oval $D_{\A^k}$ is
strictly convex, because the spectral curve is Harnack. So the infimum is obtained at a single
point
on the boundary of the amoeba, corresponding to a unique value of the parameter
$u_0+2iK'\in 2iK'+\RR/4\ZZ$. This gives, in the periodic case, a geometric
interpretation of the parameter $u_0$ in Theorem~\ref{thm:asymp_Green} in terms
of the spectral curve.

Using a little further the formalism developed by Pemantle and
Wilson~\cite[Chapter~9]{PeWi13}, 
one can recover in the periodic case
the precise asymptotics we obtain in Theorem~\ref{thm:asymp_Green} in the
general quasicrystalline setting, with the exact prefactor.

\section{Random rooted spanning forests on isoradial graphs}
\label{sec:statmech}

In this section we study random rooted spanning forests on isoradial graphs. 
In Section~\ref{subsec:rsf_ust}
we define the statistical mechanics model of rooted spanning forests. Then, in Section~\ref{sec:infinite_vol_meas} we
prove an explicit, local expression for 
an infinite volume Boltzmann measure involving the Green function of Theorem~\ref{thm:expression_Green}. In Section \ref{sec:free_energy} we show an explicit, \emph{local} expression for the free energy of the model; 
we also show a second order phase transition at $k=0$ in the rooted spanning forest model. At $k=0$, one 
 recovers the Laplacian considered in~\cite{Kenyon3}, and we thus provide a proof that the corresponding spanning tree model is critical. 
In Section~\ref{sec:Zinvariance} we prove that our one-parameter family of massive Laplacian defines a one-parameter family of 
 $Z$-invariant spanning forest models.

\subsection{Rooted spanning forest model and related spanning trees}
\label{subsec:rsf_ust}


Let $\Gs=(\Vs,\Es)$ be a (not necessarily isoradial) graph.
A \emph{tree} of $\Gs$ is a connected subgraph of $\Gs$ containing no cycle. A
\emph{rooted tree} is a tree with a distinguished vertex, known as the
\emph{root}. The root of a generic tree $\Ts$ is denoted $x_\Ts$. 
A \emph{spanning tree} is a tree spanning all vertices of the graph. 

A \emph{rooted spanning forest} of $\Gs$ is a subgraph of $\Gs$, spanning all
vertices of the graph, such that every connected component is a rooted tree.
Let $\F(\Gs)$ denote the set of rooted spanning forests of the graph $\Gs$.

Assume that edges of the graph $\Gs$ are assigned positive \emph{conductances} $(\rho(e))_{e\in\Es}$ 
and that vertices are assigned positive \emph{masses} $(m^2(x))_{x\in\Vs}$.
This is equivalent to defining a massive Laplacian
$\Delta^m$ on $\Gs$, through Equation~\eqref{equ:operator_general}.

Suppose now that $\Gs$ is a finite. Then we can define a model of statistical mechanics, by constructing
the \emph{rooted spanning forest Boltzmann probability measure}, denoted
$\PPforest$, defined by:
\begin{equation*}
\forall\,\Fs\in \F(\Gs),\quad \PPforest(\Fs)=\frac{1}{\Zforest(\Gs,\rho,m)} \prod_{\Ts\in\Fs}\Bigl(m^2(x_\Ts)\prod_{e\in\Ts}\rho(e)
\Bigr),
\end{equation*}
where the normalizing constant
$\Zforest(\Gs,\rho,m)=\sum_{\Fs\in\F(\Gs)}\prod_{\Ts\in\Fs}\Bigl(m^2(x_\Ts)\prod_{e\in\Ts}\rho(e)
\Bigr)$ is the \emph{rooted spanning forest partition function}.

There is a direct and useful bijection between
weighted rooted spanning forests of $\Gs$ and weighted spanning trees of the
graph $\Gsr$, obtained from $\Gs$ by adding a vertex $\rs$, and by joining 
every vertex of $\Gs$ to $\rs$.
Given a spanning tree of $\Gsr$, removing every edge connecting a vertex of $\Gs$ to the vertex
$\rs$, and replacing it by a root, yields a rooted spanning forest of $\Gs$.
This bijection is weight preserving if edges of $\Gsr$ have conductances
$\rho^m$
defined by:
\begin{equation}\label{equ:defrhom}
\rho^m(e)=
\begin{cases}
\rho(e)&\text{ if $e$ is an edge of the graph $\Gs$,}\\
m^2(x)&\text{ if $e=xr$ is an edge of $\Gsr\setminus\Gs$ connecting the vertex $x$ to $\rs$.}
\end{cases}
\end{equation}

Let $\T(\Gsr)$ denote the set of spanning trees of the graph $\Gsr$. 
The \emph{spanning tree Boltzmann probability measure} on $\Gsr$, denoted $\PPtree$, is defined by:
\begin{equation*}
\forall\,\Ts\in \T(\Gsr),\quad \PPtree(\Ts)=\frac{1}{\Ztree(\Gsr,\rho^m)} \prod_{e\in\Ts}\rho^m(e),
\end{equation*}
where $\Ztree(\Gsr,\rho^m) = \sum_{\Ts\in\T(\Gsr)}\prod_{e\in\Ts}\rho^m(e)$
is the \emph{spanning tree partition function}.
$\PPtree$ is the image
measure of $\PPforest$ by the bijection above.

From the above bijection, we know that
$\Zforest(\Gs,\rho,m)=\Ztree(\Gsr,\rho^m)$, and also that $\PPtree$ and
$\PPforest$ are transported one into the other by the bijection.




By Kirchhoff's matrix-tree theorem~\cite{Kirchhoff}, there is an explicit
expression of the spanning tree partition function
as a determinant, and by the
work of Burton and Pemantle~\cite{BurtonPemantle} (see also~\cite{BLPS}),
under $\PPtree$ the edges of the random spanning tree on $\Gsr$ form a determinantal
process. Restating these results from the point of view of spanning forests on
$\Gs$ yields:

\begin{thm}[Matrix-Forest Theorem~\cite{Kirchhoff}]
  \label{thm:matrix_forest}
The rooted spanning forest partition function of the graph $\Gs$ is equal to:
$$
\Zforest(\Gs,\rho,m)=\det(\Delta^m).
$$
\end{thm}

With the same notation as before, we let $G^m$ be the massive Green
function on $\Gs$, \emph{i.e.}, the inverse of the massive
Laplacian~$\Delta^m$. Fix an arbitrary orientation of the edges of $\Gs$, so
that every edge $e=(e_-,e_+)$ is now oriented from one of its ends $e_-$ to the
other one $e_+$.

\begin{thm}[Transfer Impedance Theorem~\cite{BurtonPemantle}]
  \label{thm:transimp_forest}
  For any distinct edges $e_1,\dotsc,e_j$ and vertices
  $x_1,\dotsc,x_k$ of $\Gs$, the probability that these edges belong to a
  random rooted spanning forest and that these vertices are roots, is:
  \begin{equation*}
    \PPforest(\{e_1,\dotsc,e_k,x_1,\dotsc,x_j\})=\det
    \left(
    \begin{array}{c|c}
      \bigl(\Hs^k(e_i,e_\ell)\bigr)_{1\leq i,\ell\leq j} & 
      \bigl(\Hs^k(e_i,x_\ell)\bigr)_{1\leq i\leq j,1\leq \ell \leq k} \\
      \hline
      \bigl(\Hs^k(x_i,e_\ell)\bigr)_{1\leq i\leq k,1\leq \ell \leq j}  & 
      \bigl(\Hs^k(x_i,x_\ell)\bigr)_{1\leq i,\ell\leq k}
    \end{array}
%
      \right),
  \end{equation*}
  where
  \begin{align*}
    \Hs(e,e') & = 
    \rho(e')(G^{m}(e_-,e'_-)-G^m(e_+,e'_-)-G^m(e_-,e'_+)+G^m(e_+,e'_+)),\\
    \Hs(e,x) &= m^2(x) (G^m(e_-,x)-G^m(e_+,x)), \\
    \Hs(x,e) &= \rho(e')(G^m(x,e_-)-G^m(x,e_+),\\ 
    \Hs(x,x') &= m^2(x')G^m(x,x').
  \end{align*}

\end{thm}

The quantity $\Hs(e,e')$ is the \emph{transfer impedance} through $e'$,
with a source at the end points of $e$. We extend the name \emph{transfer impedance} to the whole of $\Hs$, even
when arguments are possibly not edges, but vertices.

The derivation of Theorems~\ref{thm:matrix_forest} and \ref{thm:transimp_forest} from
their classical versions is detailed in Appendix~\ref{sec:app_derivation_classical}.

\subsection{Infinite volume measure}\label{sec:infinite_vol_meas}

From now on, suppose that $\Gs$ is an infinite isoradial graph, whose faces are covering
the whole plane, with conductances $\rho$ and masses $m^2$ of Equations~\eqref{equ:def_conductances} and \eqref{eq:def_mass},
 for some $k\in(0,1)$. In the next theorem, we prove that the natural infinite volume Gibbs measure 
 on the set $\F(\Gs)$ of all rooted spanning forests of $\Gs$
 is expressed using the 
 impedance transfer matrix $\Hs^k$ involving the Green function $G^{m(k)}$ on $\Gs$ of Theorem~\ref{thm:expression_Green}.  
\begin{thm}
\label{thm:infinite_vol_meas}
  Let $k\in(0,1)$. There exists a unique measure $\PPforest^k$ on rooted spanning
  forests
  of $\Gs$ such that for any distinct edges
  $e_1,\dotsc,e_j$, and any distinct vertices $x_1,\dotsc,x_k$ of $\Gs$:
  \begin{equation*}
    \PPforest^k(\{e_1,\dotsc,e_j,x_1,\dotsc,x_k\})=\det
    \left(
    \begin{array}{c|c}
      \bigl(\Hs^k(e_i,e_\ell)\bigr) & 
      \bigl(\Hs^k(e_i,x_\ell)\bigr) \\
      \hline
      \bigl(\Hs^k(x_i,e_\ell)\bigr) & 
      \bigl(\Hs^k(x_i,x_\ell)\bigr)
    \end{array}
    \right)
  \end{equation*}
  where $\Hs^k$ is the transfer impedance on $\Gs$.

  The measure $\PPforest^k$ is the weak limit of the sequence $(\PPforest^{k,(n)})$ on
  rooted spanning forests of any exhaustion $(\Gs_n)_{n\geq 1}$ of $\Gs$ by finite graphs. Under
  $\PPforest^k$, the connected components of the random rooted spanning forests
  are finite almost surely.
\end{thm}

\begin{proof}
  Let $(\Gs_n)_{n\geq 1}$ be an
  exhaustion of $\Gs$ by finite graphs.
  By Theorem~\ref{thm:transimp_forest}, the determinantal
  process on edges of $\Gs_n$ with kernel given by the transfer impedance matrix
  on $\Gs_n$ is a probability measure on rooted spanning forests of $\Gs_n$.
  Moreover, by
  Lemma~\ref{lem:convergence_Green} of Appendix~\ref{sec:rw}, the sequence of Green functions on
  $\Gs_n$ converges pointwise to the Green function $G^m$ of $\Gs$. Therefore
  $\Hs^k$ is the limit of the sequence of transfer impedance matrices on $\Gs_n$.
  
  The convergence of the kernel of a determinantal process implies the convergence of the
  finite dimensional laws, which are consistent, as limits of probability
  measures. By Kolmogorov's extension theorem, there exists a probability
  measure $\PPforest^k$ on the set of edges $\Gs$, which has those finite dimensional
  marginals. Moreover, this measure is unique, since $\Gs$ has countably
  many edges.
  
  The random spanning forest on $\Gs_n$ can be sampled by Wilson's algorithm, by creating the branches 
  from the loop erasure of killed random walks, with transition probabilities naturally 
  defined from the conductances and masses, see Section~\ref{sec:app_derivation_classical}.
  Since $(\Gs_n)$ is an exhaustion of $\Gs$, we can take
  the limit in the loop erasure procedure and also sample the random configuration
  from the Gibbs measure on $\Gs$ by Wilson's algorithm on $\Gs$ with the killed
  random walk $(X_j)_{j\geq 0}$ defined in Section~\ref{sec:rw}, in the same
  manner as it is done in \cite[Theorem~5.1]{BLPS} to construct the
  wired uniform spanning forests on infinite graphs.

  We now show that the support is the set of rooted spanning forests with
  finite size components.
  From the convergence of finite dimensional marginals, it is clear that the limiting objects
  are rooted spanning forests.
  But what could happen is that as $n$ goes to infinity,
  some tree components on $\Gs_n$ grow infinite, and the root of these components 
  either stay at finite distance, or are sent to infinity (and thus
  disappear). To prove the statement about the support of $\PPforest^k$, one has
  to rule out the presence with positive probability of an infinite component in
  $\Gs$, with or without a root.

  Fix a vertex $x_0$. For every $\ell\geq 1$, define $S_\ell$ to be the set of
  vertices of $\Gs$ at distance $2\ell$ from $x_0$.
  If there is in the random rooted spanning forest an infinite component $T$, then
  $T$ has to intersect infinitely many $S_\ell$ (in fact all except maybe a
  finite number of them). The root of $T$ is either at infinity, or at finite
  distance from $x_0$. There is thus an infinite number of $\ell$ for which
  there exists a vertex $x_\ell\in S_\ell\cap T\subset S_\ell$ at distance at least $\ell$
  of the root of $T$.
  However, from Wilson's algorithm, the path from $x_\ell$ to its root is the loop
  erasure of the killed random walk $(X_j)_{j\geq 0}$ starting from $x_\ell$. The distance to
  the root is thus not larger than the length of the trajectory of the random
  walk starting from $x_\ell$ before being absorbed. Since the random walk has a
  probability of being absorbed at each vertex which is uniformly bounded from
  below by a positive quantity, the length is dominated by a geometric variable.
  The probability that it is greater than $\ell$ is thus exponential small in $\ell$.
  Since there are $O(\ell)$ vertices on $S_\ell$, by Borel-Cantelli's lemma, we see
  that the probability that the infinite sequence $(x_\ell)$ exists is zero. In
  other words, with probability 1, there is no infinite component.
  \end{proof}


\begin{rem}
By Remark~\ref{rem:critical_Green}, when $k$ goes to $0$, the impedance transfer
matrix $\Hs^k$ converges to the impedance transfer matrix defined from Kenyon's
critical Green function~\cite{Kenyon3}, which is the
kernel of the determinantal process on edges corresponding to the spanning tree
measure $\PPtree=\PPforest^0$ with conductances $(\tan(\theta_e))_{e\in\Es}$. Therefore, as $k\rightarrow 0$, the measure
$\PPforest^k$ on spanning forests converges weakly to the measure $\PPtree$.
\end{rem}

Using the computations for the Green function of
Equations~\eqref{equ:explicit1} and~\eqref{equ:explicit2}, we can write down the probability
under $\PPforest^k$ of a single edge $e=xy$ to be in the random rooted spanning forest
\begin{align}
 \PPforest^k(\{e\})&=\sc(\theta_e)(G^m(x,x)+G^m(y,y)-2G^m(x,y))\nonumber\\
  &= \frac{2\sc(\theta_e)
  K'}{\pi}(k'-\dn(\theta_e))+2H(2\theta_e), \label{eq:proba_edge}
\end{align}
and that of a vertex $x$ to be a root:
\begin{equation}
  \label{eq:proba_vertex}
  \PPforest^k(\{x\})= m^2(x) G^m(x,x)=\frac{m^2(x) K'k'}{\pi}.
\end{equation}

\subsection{Free energy of rooted spanning forests on periodic isoradial graphs}
\label{sec:free_energy}

Suppose that the isoradial graph $\Gs$ is $\ZZ^2$-periodic and  
let $(\Gs_n)_{n\geq 1}$ be the natural exhaustion by toroidal graphs:
$\Gs_n=(\Vs_n,\Es_n):=\Gs/n\ZZ^2$. 
Since conductances and masses only depend on the elliptic modulus $k$,
we denote by $\Zforest^k(\Gs_n)$ the partition function of rooted spanning forests of $\Gs_n$.

Define the
\emph{free energy of rooted spanning forests}, denoted by $F^k_{\mathrm{forest}}$, to be minus the exponential growth rate of the rooted spanning forest partition functions
of the graphs $\Gs_n$. 
\begin{equation*}
F^k_{\mathrm{forest}}=-\lim_{n\rightarrow\infty}\frac{1}{n^2} \log \Zforest^k(\Gs_n).
\end{equation*}
Then, we obtain the following result.

\begin{thm}
\label{thm:free_energy}
For every $k\in(0,1)$, the free energy of the rooted spanning forest model on $\Gs$ admits the following
formula in terms of the angles of the isoradial embedding:
\begin{align}\label{equ:free_energy_thm}
  F^k_{\mathrm{forest}}&=
  -\vert \Vs_1 \vert \int_0^K 4H'(2\theta)\log\sc(\theta)\ud\theta-
  \sum_{e\in\Es_1}
  \int_{0}^{\theta_e}\frac{2H(2\theta)\sc'(\theta)}{\sc(\theta)}\ud\theta\\
  &=-\vert \Vs_1 \vert \int_0^K 4H'(2\theta)\log\sc(\theta)\ud\theta+
  \sum_{e\in\Es_1}\Bigl(-
  2H(2\theta_e)\log\sc(\theta_e)+\int_{0}^{\theta_e}4H'(2\theta)\log\sc(\theta)\ud\theta\Bigr),\nonumber
\end{align}
where $H$ is the function defined in Equation~\eqref{def:H}. 
\end{thm}



\begin{proof}

For every $n\geq 1$, let $\Delta^m_n$ be the massive Laplacian matrix of the
graph $\Gs_n$.

Using symmetries of the graph $\Gs_n$ under the group $(\ZZ/n\ZZ)^2$, the matrix $\Delta^m_n$ can be
block diagonalized, and by Theorem~\ref{thm:matrix_forest},
\begin{equation*}
  \frac{1}{n^2}\log \Zforest(\Gs_n,\rho,m^2) = 
  -\frac{1}{n^2}\log \det \Delta^m_n =
  -\frac{1}{n^2} \sum_{j,\ell=0}^{n-1}
  \log P_{\Delta^m}(e^{\frac{2i\pi j}{n}}, e^{\frac{2i\pi \ell}{n}}),
\end{equation*}
where $P_{\Delta^m}(z,w)=\det\Delta^m(z,w)$ is the characteristic polynomial, see
Section~\ref{sec:qperfunc}. Since it does not vanish on the torus $\TT^2$,
this quantity converges to
\begin{equation*}
F_{\mathrm{forest}}=-\iint_{|z|=|w|=1}\log \det\Delta^m(z,w)\frac{\ud z}{2\pi i z}\frac{\ud w}{2\pi i w}.
\end{equation*}

This formula is true for any biperiodic weighted
graphs, as long as the mass is strictly positive at one vertex at least. When
all the masses are zero, this expression is the free energy of the
spanning tree model of the graph.

When conductances become infinite, the free energy blows up. A relevant,
related quantity is the \emph{entropy} of the model:
\begin{equation*}
  S_{\text{forest}} = -F_{\text{forest}} -
    \sum_{e\in \Es_1} \PPforest(\{e\}) \log \rho(\theta_e)-
    \sum_{x\in \Vs_1} \PPforest(\{x\})\log m^2(x).
\end{equation*}

Note that in a rooted spanning forest, the number of roots plus the number of edges is
equal to the number of vertices. Therefore, 
\begin{equation*}
  \sum_{e\in \Es_1} \PPforest(\{e\}) +
  \sum_{x\in \Vs_1} \PPforest(\{x\})
  = \vert\Vs_1\vert.
\end{equation*}
As a consequence, if we multiply all conductances and squared masses by the same
factor $\lambda$, $F_{\text{forest}}$ gets an extra additive constant
$-|\Vs_1|\log\lambda$, whereas $S_{\text{forest}}$ stays unchanged. In particular,
it always gives a finite result.


To find the formula for the free energy, following ideas of~\cite{Kenyon3}, we
study  its variation as the embedding
of the graph is modified by tilting the train-tracks, see Section \ref{sec:NaturalOperations}.

Let us consider a smooth deformation of the isoradial graph $\Gs$, \emph{i.e.}, a
continuous family of isoradial graphs $(\Gs(t))_{t\in[0,1]}$ obtained by varying
the directions $\alpha_T(t)$ of the train-tracks smoothly with $t$, in such a way that
$\Gs(1)=\Gs$ and $\Gs(0)=\Gs^{\text{flat}}$, where $\Gs^{\text{flat}}$ is an isoradial graph whose edges
have half-angles equal to\footnote{%
  This is in contradiction with our hypothesis that all the angles
  $\overline\theta_e$ are bounded away from $0$ and $\frac{\pi}{2}$. We can still make sense
  of it. In particular, we can suppose that the condition of bounded angles is
  true for $\Gs(\varepsilon)$, as soon as $\varepsilon>0$.
} $0$ or $\frac{\pi}{2}$.
More precisely, every vertex of $\Gs^{\text{flat}}$ has two incident vertices
with angle $\overline{\theta}_e=\frac{\pi}{2}$ and infinite conductance (called the
\emph{short edges}), the other incident edges (called the \emph{long edges})
having $\overline{\theta}_e=0$, thus zero conductance. The short edges form 
nontrivial disjoint cycles on the fundamental domain $\Gs^{\text{flat}}_1$. At a
vertex $x$ of degree $n$ of $\Gs^{\text{flat}}$, the mass becomes:
\begin{equation*}
  m^2(x)=\lim_{%
    \substack{%
    \theta_i,\theta_j\to K \\ 
     \theta_k\to 0,\ k\neq i,j
   }
 } \sum_{\ell=1}^n (\Armbis(\theta_\ell)-\sc(\theta_\ell))  =0,
\end{equation*}
since the function $\Armbis-\sc$ vanishes at $0$ and $K$, see the proof of Proposition \ref{prop:neg_mass}.
Let us first compute the entropy $S_{\text{forest}}^{\text{flat}}$, when the
graph becomes flat,
by dividing all the conductances and masses by the largest
conductance. After this renormalization, all edge-weights and
masses on $\Gs^{\text{flat}}$ are zero, except for the short edges: the entropy
we want to compute is thus the entropy of the spanning tree model on the
degenerate periodic graph only made of copies of the short edges, forming infinite lines. Since the number of
spanning trees on a cycle does not grow exponentially with its size, the number
of spanning trees on $\Gs^{\text{flat}}_N$ does not grow exponentially with
$N^2$, and thus the entropy of the model on $\Gs^{\text{flat}}$ is equal to zero.

One could then follow the variation of the entropy along the deformation.
However, it is simpler to use a twisted definition of the entropy, which does not
really have a physical interpretation, but whose variation is easier to analyze.
Let us define:
\begin{align}
  \label{eq:def_Stilde}
  \widetilde{S}_{\text{forest}}&=
  -F_{\text{forest}} - \sum_{e\in\Es_1} 2H(2\theta_e) \log\rho(\theta_e) \\
  &= S_{\text{forest}} 
  + \sum_{e\in\Es_1}[\PPforest(\{e\})-2H(2\theta_e)]\log\sc(\theta_e)
  +\sum_{x\in\Vs_1}\PPforest(\{x\})\log m^2(x).\nonumber
\end{align}
As the graph becomes flat, $\widetilde{S}_{\text{forest}}$ tends to zero, since its
difference with $S_{\text{forest}}$ becomes negligible, as can be checked from
Equations~\eqref{eq:proba_edge} and \eqref{eq:proba_vertex}.

Denote by $F_{\text{forest}}(t)$ and
$\widetilde{S}_{\text{forest}}(t)$ the free energy and the twisted entropy for the
rooted spanning forest model on the graph $\Gs(t)$.
As the angles of the train-tracks are supposed to vary smoothly with $t$, one
can write:
\begin{align*}
\frac{\ud F_{\mathrm{forest}}(t)}{\ud t}&=-
 \iint_{|z|=|w|=1}  \frac{\ud}{\ud t} \log\det \Delta^m(z,w)\frac{\ud z}{2\pi i z}\frac{\ud w}{2\pi i w}
\end{align*}
\begin{align*}
\frac{\ud F_{\mathrm{forest}}(t)}{\ud t}&=-\iint_{|z|=|w|=1} \sum_{x,y\in\Vs_1} 
     \frac{\partial \log\det \Delta^m(z,w)}
          {\partial \Delta^m(z,w)_{x,y}} 
     \frac{\ud \Delta^m(z,w)_{x,y}}{\ud t}
  \frac{\ud z}{2\pi i z} \frac{\ud w}{2\pi i w} \\ 
  & = -\sum_{x,y\in\Vs_1} \iint_{|z|=|w|=1} (\Delta^m(z,w)^{-1})_{y,x}
    \frac{\ud\Delta^m(z,w)_{x,y}}{\ud t} 
  \frac{\ud z}{2\pi i z} \frac{\ud w}{2\pi i w},
\end{align*}
since for an invertible matrix $M=(M_{i,j})$, one has
$\frac{\partial \log\det M}{\partial M_{i,j}} = (M^{-1})_{j,i}$.

By definition of the massive Laplacian matrix $\Delta^m$,
the nonzero contributions of the entries of its Fourier transform
$\Delta^m(z,w)_{x,y}$ can be split into two categories:
\begin{itemize}
\item If $(x,y)$ defines a directed edge $e$ of $\Gs_1$,
  then $\Delta^m(z,w)_{x,y}$ has a term equal to $-\rho(\theta_{e})$, possibly
  multiplied by a nontrivial power of $z$ and $w$ if
  the lifts of $x$ and $y$ in $\Gs$ belong to different fundamental domains.
  In that case, if the contribution is $-\rho(\theta_e) z^i w^j$, then its
  derivative with respect to $t$ is $-\frac{\ud \rho(\theta_e)}{\ud t} z^i w^j$.
\item If $x=y$, there is also in $\Delta^m(z,w)_{x,x}$ a term $d(x)$, coming
  from the diagonal of $\Delta^m$.
\end{itemize}
Note that in some cases, in particular for graphs $\Gs$ with a small fundamental
domain, the two types of contributions can appear on the diagonal. However, if
that happens, the term $d(x)$ is the only one with no extra power of $z$ or
$w$. The other terms on the diagonal come by pair with opposite exponents for
$z$ and $w$, corresponding to the two possible directions of the edge crossing
$\widetilde\gamma_x$ and/or $\widetilde\gamma_y$.

From Equation \eqref{eq:periodic_Green}
and using also the symmetry of the Green function
we thus obtain:
\begin{align*}
\frac{\ud F_{\mathrm{forest}}(t)}{\ud t}
&=-\sum_{x\in\Vs_1} G^m(x,x)\frac{\ud(d(x))}{\ud t}
+2\sum_{e=xy\in\Es_1}G^m(x,y)\frac{\ud\rho(\theta_e)}{\ud t}.
\end{align*}

Along the deformation, the graph $\Gs(t)$ stays isoradial, so the formulas for
the conductances, the diagonal term of the massive Laplacian and the Green
function in terms of the elliptic functions hold.
Let us handle the first term. The diagonal term $d(x)$
by Equation~\eqref{eq:def_mass} is:
\begin{equation*}
d(x)=\sum_{e\sim x} \Armbis(\theta_e).
\end{equation*}
Moreover, by Equation~\eqref{eq:green_diag} of Appendix~\ref{app:comput_green},
$G^m(x,x) = \frac{k' K'}{\pi}$, which does not depend on $x$. We can
therefore rewrite the first term as
\begin{equation}
  \label{equ:free1}
  \sum_{x\in \Vs_1}\! G^m(x,x) \frac{\ud(d(x)) }{\ud t} = 
  \frac{k' K'}{\pi}
  \sum_{x\in \Vs_1}\! \sum_{e \sim x}\! \frac{\ud \Armbis(\theta_e)}{\ud t}
  = 2 \frac{k' K'}{\pi}
  \sum_{e \in \Es_1} \frac{\ud \Armbis(\theta_e)}{\ud t} 
  \stackrel{\eqref{eq:deriv_Abis}}{=}
  \sum_{e\in\Es_1}\! 2\frac{K'}{\pi} \frac{\dn^2(\theta_e)}{\cn^2(\theta_e)}
  \frac{\ud \theta_e}{\ud t}.
\end{equation}

We now handle the second term. 
By definition, $\rho(\theta_e)=\sc(\theta_e)$ and 
$\sc'=\dn\cdot \cn^{-2}$.
By Formula~\ref{pointc} of Lemma~\ref{lem:Gneighbor} proved in Appendix~\ref{app:comput_green}, 
$G^m(x,y)=\frac{K'\dn(\theta_e)}{\pi}-\frac{H(2\theta_e)}{\sc(\theta_e)}$. The
second term can therefore be rewritten as
\begin{equation}\label{equ:free2}
2\sum_{e=xy\in\Es_1}G^m(x,y)\frac{\ud\rho(\theta_e)}{\ud t}
= 2\sum_{e\in\Es_1}
  \left[
  \frac{K'\dn^2(\theta_e)}{\pi\cn^2(\theta_e)}
  - \frac{H(2\theta_e)\dn(\theta_e)}{\cn(\theta_e)\sn(\theta_e)}
\right]\frac{\ud \theta_e}{\ud t}.
\end{equation}

Combining Equations~\eqref{equ:free1} and \eqref{equ:free2}, we deduce
\begin{align}\label{eq:simp_f}
  \frac{\ud F_{\text{forest}}(t)}{\ud t} &= \sum_{e\in\Es_1}
  f(\theta_e)\frac{\ud \theta_e}{\ud t}, &
  \text{with}
  \quad
  f(\theta)
  &= -2\frac{H(2\theta)\dn(\theta)}{\cn(\theta)\sn(\theta)}
  = -2\frac{H(2\theta)\sc'(\theta)}{\sc(\theta)}.\\
  \intertext{Similarly,}
  \frac{\ud\widetilde{S}_{\text{forest}}(t)}{\ud t} &= \sum_{e\in \Es_1}
  \widetilde{s}(\theta_e)\frac{\ud\theta_e}{\ud t}, &
  \text{with}
  \quad
    \widetilde{s}(\theta) &= -f(\theta) - \frac{\ud}{\ud \theta}
    2H(2\theta)\log\sc(\theta) = -4H'(2\theta)\log\sc(\theta).\nonumber
  \end{align}

  Therefore to compute $\widetilde{S}_{\text{forest}}$ for the graph $\Gs$, it
  suffices to integrate
  $\frac{\ud \widetilde{S}_{\text{forest}}}{\ud t}$ along the deformation:
\begin{equation*}
  \widetilde{S}_{\text{forest}}=
  \widetilde{S}_{\text{forest}}(1)-\widetilde{S}_{\text{forest}}(0) 
  = \int_{0}^{1}\sum_{e\in\Es_1}
  \widetilde{s}(\theta_e(t))\frac{\ud\theta_e}{\ud t} \ud t 
  =\sum_{e\in \Es_1} \int_{\theta_e^{\text{flat}}}^{\theta_e}
  \widetilde{s}(\theta)
  \ud\theta.
\end{equation*}

Among the parameters $(\theta_e^{\text{flat}})_{e\in\Es_1}$, exactly $|\Vs_1|$
are equal to $K$, the others being $0$. Using moreover that $\widetilde{S}_{\text{forest}}(0)=\widetilde{S}_{\text{forest}}^{\text{flat}}=0$,
we then have:
\begin{equation*}
  \widetilde{S}_{\text{forest}}=
\sum_{e\in \Es_1} \int_{0}^{\theta_e}
\widetilde{s}(\theta)
\ud\theta - |\Vs_1|\int_0^K\widetilde{s}(\theta)\ud\theta.
\end{equation*}

Finally, one can compute the free energy from~\eqref{eq:def_Stilde}:
\begin{equation}
  F_{\text{forest}} 
  = -\widetilde{S}_\text{forest} - \sum_{e\in\Es_1} 2H(2\theta_e)\log\sc(\theta_e)
  =|\Vs_1|
    \int_{0}^{K}\widetilde{s}(\theta)\ud\theta
  +\sum_{e\in\Es_1} \int_{0}^{\theta_e}f(\theta)\ud\theta,
\label{free_energy_final}
\end{equation}
which is exactly the expression given in Theorem~\ref{thm:free_energy}. The
equality between the two expressions follows from an integration by parts.
\end{proof}

\begin{rem}
Formulas~\eqref{equ:free_energy_thm} of Theorem \ref{thm:free_energy} are a
continuous expression of $k$. When $k$ goes to zero,
$H'$ becomes constant, $\sc$ becomes $\tan$,
and the first integral becomes up to some multiplicative constant
\begin{equation*}
  \int_0^{\frac{\pi}{2}}\log\tan(\theta)\ud \theta,
\end{equation*}
which is zero by antisymmetry. 
Splitting $\log\tan\theta=\log\sin\theta -\log\cos\theta$ in the
remaining integral of the second formula
yields the following value:
when $k\to 0$:
\begin{equation}
\label{eq:crit_free_energy}
     F_{\textnormal{forest}}^0=-\sum_{e\in\Es_1} \frac{2}{\pi} (L(\theta_e)+L(\pi/2-\theta_e))+\frac{2\theta_e}{\pi}\log \tan \theta_e,
\end{equation}
where $L$ is the Lobachevsky function, \emph{i.e.}, $L(x)=-\int_{0}^x\log (2\sin t)\text{d}t$. 
This is up to a negative sign, the logarithm of the normalized determinant of the Laplacian operator of \cite{Kenyon3}. 
By slightly adapting the proof above, one sees that \eqref{eq:crit_free_energy} actually is the free energy of the spanning tree model 
on $\Gs$ with conductances $(\tan(\theta_e))_{e\in\Es}$.
\end{rem}

The next result proves a second order phase transition at $k=0$ in the rooted spanning forest model. 
This shows that the spanning tree model with conductances $(\tan(\theta_e))_{e\in\Es}$, 
corresponding to the Laplacian introduced in \cite{Kenyon3}, is a \emph{critical model}; thus giving full meaning to the terminology
\emph{critical} used in the paper \cite{Kenyon3}. Note that the conductances
and masses behave smoothly in the neighborhood of $k=0$, see Lemma \ref{lem:analytic_weight}.

\begin{thm}
\label{thm:free_energy_k0}
Let $F_{\textnormal{forest}}^0$ be the free energy of spanning trees with critical
conductances $(\tan(\theta_e))_{e\in\Es}$. The free
energy $F_{\textnormal{forest}}^k$ admits the following expansion around $k=0$:
\begin{equation*}
  F_{\textnormal{forest}}^k=F_{\textnormal{forest}}^0 - k^2\log k^{-1} \vert \Vs_1\vert  +O(k^2).
\end{equation*}
As a consequence the model of rooted spanning forests on $\Gs$ exhibits a phase transition of order two at $k=0$.
\end{thm}

\begin{proof}
We start from the terms involving $f$ in the second equality of 
Equation~\eqref{free_energy_final}, in which we perform
the change of variable from $\theta$ to $\overline\theta=\frac{\pi \theta}{2K}$:
\begin{equation*}
     F_e:=
     \int_{0}^{\theta_e}
     f(\theta)\,\ud\theta
     =-\frac{4K}{\pi}\int_{0}^{\overline{\theta}_e}H\Bigl(\frac{4K\overline\theta}{\pi}\Bigr)\frac{\dn}{\sn\cdot\cn}\Bigl(\frac{2K\overline\theta}{\pi}\Bigr)\,\text{d}\overline\theta.
\end{equation*}

We use the expansion of $H$ in terms of the \emph{nome} $q=e^{-\pi K'/K}$:
\begin{equation}
\label{eq:expansion_nome_H}
     H\Bigl(\frac{4K\overline\theta}{\pi}\Bigr)=\frac{\overline\theta}{\pi}+\frac{2K'}{K}
     \sum_{s=1}^\infty  \frac{q^s}{1-q^{2s}} \sin(2s\overline\theta)=\frac{\overline\theta}{\pi}-\frac{2}{\pi}\log q
     \sum_{s=1}^\infty  \frac{q^s}{1-q^{2s}} \sin(2s\overline\theta).
\end{equation}
In order to prove \eqref{eq:expansion_nome_H}, we use the expression of $H$ in terms of $\Erm$, see \eqref{eq:new_formulation_Armbis}, 
as well as the expansion of $\Erm$ in terms of the nome, which can be obtained from \cite[17.4.28 and 17.4.38]{AS}.
The following expansion near $k=0$ holds (see \cite[17.3.14 and 17.3.21]{AS})
\begin{equation*}
  q=\frac{k^2}{16}+\frac{k^4}{32}+O(k^6).
\end{equation*}
We obtain that
\begin{equation*}
     H\Bigl(\frac{4K\overline\theta}{\pi}\Bigr)=\frac{\overline\theta}{\pi}-k^2\log k \frac{\sin(2\overline\theta)}{4\pi} +O(k^2).
\end{equation*}
We now multiply by $\frac{\dn}{\sn\cdot\cn}(\frac{2K\overline\theta}{\pi})$, which is analytic in $k^2$ and admits the expansion 
$\frac{1}{\sin \overline\theta \cos\overline\theta}+O(k^2)$, see \cite[16.13.1--16.13.3]{AS}, and we integrate. In this way, we obtain
\begin{equation*}
     F_e = -\frac{2}{\pi} \int_{0}^{\overline{\theta}_e} \frac{\overline\theta}{\sin \overline\theta \cos \overline\theta}\text{d}{\overline\theta}+k^2\log k \frac{\overline{\theta}_e}{\pi}+O(k^2),
\end{equation*}
where we have made use of the standard identity $\sin 2\overline\theta=2\sin \overline\theta \cos \overline\theta$. 
The constant coefficient of $F_e$ is integrated by parts to get:
\begin{equation*}
     \frac{2}{\pi} \int_{0}^{\overline{\theta}_e}
     \frac{\overline\theta}{\sin \overline\theta \cos \overline\theta}\text{d}{\overline\theta} 
     = 
     \frac{2}{\pi}\Bigl(L(\overline{\theta}_e)+L\Bigl(\frac{\pi}{2}-\overline{\theta}_e\Bigr)\Bigr)
     -\frac{2\overline{\theta}_e}{\pi}\log \tan(\overline{\theta}_e),
\end{equation*}
with $L$ equal to the Lobachevsky function.
Similar computations as above give that $\int_{0}^{K}\widetilde{s}(\theta)\ud\theta$ admits the following
expansion when the parameter $k$ goes to $0$:
\begin{equation*}
  \int_{0}^{K}\widetilde{s}(\theta)\ud\theta= \frac{k^2\log k }{2}+O(k^2).
\end{equation*}

When summing all the contributions to the free energy, the 
constant coefficient is exactly $F_{\textnormal{forest}}^0$ from Equation~\eqref{eq:crit_free_energy},
and the coefficient in front of $k^2\log k$ is:
\begin{equation*}
  \frac{|\Vs_1|}{2} + \frac{1}{\pi}\sum_{e\in\Es_1}\overline\theta_e.
\end{equation*}
But since around every vertex of $\Gs_1$, the half-angles of the rhombi sum to
$\pi$, we have:
\begin{equation*}
  \frac{1}{\pi}\sum_{e\in\Es_1}\overline\theta_e =
  \frac{1}{2\pi}\sum_{x\in\Vs_1}\sum_{e\sim x} \overline\theta_e = \frac{|\Vs_1|}{2}.\qedhere
\end{equation*}
\end{proof}

\subsection{$Z$-invariance of the spanning forest model}\label{sec:Zinvariance}

Theorem~\ref{thm:expression_Green} proves an explicit, local expression for the massive
Green function of an isoradial graph with the choice of weights~\eqref{equ:cond_intro}.
From the point of view of statistical mechanics, this feature is expected from models defined on isoradial graphs that are \emph{$Z$-invariant}.
Although already present in the papers~\cite{Kennelly,Onsager},
the notion of $Z$-invariance has been fully developed by Baxter in the context of the integrable 8-vertex
model~\cite{Baxter:8V}, in connection with the Ising model and the $q$-Potts model~\cite{Baxter:Zinv}, and is 
directly related to the Yang-Baxter equations satisfied by the weights of integrable models~\cite{Perk:YB,Baxter:exactly,Kenyon6}. 


In this section, we define $Z$-invariance for rooted spanning forests, explain why one expects local expressions for probabilities, and
make explicit the Yang-Baxter equations. Then in Theorem~\ref{thm:Z_invariant_Lap}, using 3-dimensional consistency of the massive Laplacian 
(Proposition~\ref{prop:3Dconsistency}), we prove that 
with the choice of conductances and mass of Definition~\ref{def:massiveLap}, the model of rooted spanning
forests is indeed $Z$-invariant.


\subsubsection{Definition}\label{sec:deftriangleetoile}

Let $\Gsstar$ and $\Gstriang$ be finite isoradial graphs differing
by a star-triangle
transformation, as defined in Section~\ref{sec:NaturalOperations}. For
convenience of the reader, we repeat Figure~\ref{fig:star_triangle},
fixing notation for vertices and weights around the star/triangle. 
  
\begin{figure}[ht]
\begin{center}
\input{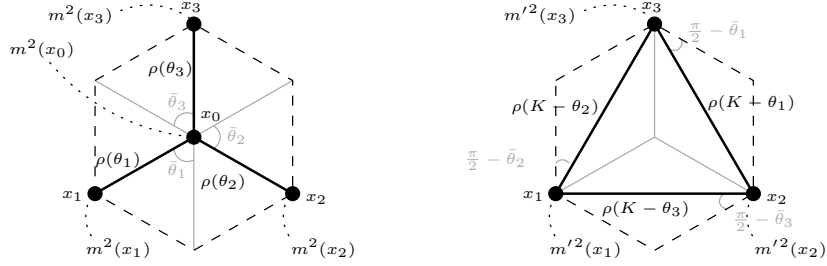}
\end{center}
\caption{\emph{Star-triangle transformation} and notation. If an isoradial graph $\Gsstar$
(left) has a \emph{star}, \emph{i.e.}, a vertex $x_0$ of degree $3$, it can be transformed into a new
isoradial graph $\Gstriang$ (right) having a \emph{triangle} connecting the three neighbors $x_1,x_2,x_3$ of
$x_0$, by shifting around the three rhombi of the underlying rhombus graph $\GR$, and vice-versa.
}
\end{figure}


$Z$-invariance imposes strong relations on the partition functions of $\Gsstar$ and $\Gstriang$.
They are more easily expressed using the bijection of Section \ref{subsec:rsf_ust}: instead of considering 
the rooted spanning forest partition functions of $\Gsstar$ and $\Gstriang$, we take
the spanning tree partition functions of $\Gsstar^{\rs}$ and $\Gstriang^{\rs}$. 

Let $\Gs'$ be the graph obtained from $\Gsstar^{\rs}$ by removing the vertex $x_0$, the edges $x_i x_0$, $x_i \rs$, $i\in\{1,2,3\}$ and 
$x_0\rs$. Note that $\Gs'$ is also obtained from $\Gstriang^{\rs}$ by removing the edges $x_i\rs$ and $x_i x_{i+1}$ (in cyclic notations), 
$i\in\{1,2,3\}$.


Denote by $\widetilde{\T}(\Gs')$ the set of edge-configurations of $\Gs'$, which
can be extended to spanning trees on $\Gstriang^\rs$ and $\Gsstar^\rs$.
For $\widetilde{\Ts}\in\widetilde{\T}(\Gs')$,
let $Z(\Gsstar|\widetilde{\Ts})$ (resp.\ $Z(\Gstriang|\widetilde{\Ts})$) be the
restricted spanning tree partition function of $\Gsstar^\rs$ (resp.\
$\Gstriang^\rs$) coinciding with $\widetilde{\Ts}$ outside the location of the
star-triangle transformation, \emph{i.e.} the sum of the weights of the local
configurations used to extend $\widetilde{\Ts}$ to
a full spanning tree of the whole graph $\Gsstar^\rs$ (resp. $\Gstriang^\rs$).


\begin{defi}
The rooted spanning forest model is \emph{$Z$-invariant}, if the conductances assigned to edges and masses assigned to vertices
are such that there exists a constant $\C$, such that 
for every  $\widetilde{\Ts}\in\widetilde{\T}(\Gs')$, we have:
\begin{equation*}
Z(\Gsstar|\widetilde{\Ts})=\C Z(\Gstriang|\widetilde{\Ts}).
\end{equation*}
\end{defi}

\begin{rem}\label{rem:equiv_proba}
Since the probability of an event can be written as the ratio of the
partition function restricted to the event 
and the full partition function, the condition of $Z$-invariance is equivalent to asking that this probability is not affected by star-triangle 
transformations performed away from the event. In particular, this suggests that formulas for probabilities should have the 
locality property.
%
\end{rem}

\subsubsection{Yang-Baxter equations of rooted spanning forests}\label{sec:ZinvEqu}

Actually, $Z(\Gsstar|\widetilde{\Ts})$ and $Z(\Gstriang|\widetilde{\Ts})$ only depend on the connection properties of $\widetilde{\Ts}$ outside of 
the star-triangle, so that we can partition $\widetilde{\T}(\Gs')$ according to whether the configuration $\widetilde{\Ts}$ satisfies:
\begin{itemize}
 \item $\Rs^{\{x_1,x_2,x_3\}}$: vertices $x_1,x_2,x_3$ are connected to $\rs$,
 \item $\Rs^{\{x_i,x_j\}}$: vertices $x_i,x_j$ are connected to $\rs$, $x_k$ is not; $i\neq j\neq k$,
 $\{i,j\}\subset \{1,2,3\}$,
 \item $\Rs^{\{x_i\}}$: the vertex $x_i$ is connected to $\rs$, $x_j,x_k$ are not; $i\in\{1,2,3\}$,
 \item $\Rs^{\emptyset}$: none of the vertices $x_1,x_2,x_3$ is connected to $\rs$.
\end{itemize}

Denote by $\Rs$ any condition above.
With a slight abuse of notation, if $\widetilde{\Ts}$ satisfies the condition $\Rs$,
we will write $Z(\Gsstar|\Rs)$ for $Z(\Gsstar|\widetilde{\Ts})$ and the same for
$\Gstriang$.
The model is thus $Z$-invariant
if and only if, there exists a constant $\C$, such that:
\begin{equation}
\label{equ:star_triangle_1}
\forall\,\text{ condition $\Rs$},\ Z(\Gsstar|\Rs)=\C\,Z(\Gstriang|\Rs) \quad \text{(Yang-Baxter equations).}
\end{equation}

Classicaly $Z$-invariance is proved by showing that the weights satisfy the Yang-Baxter equations.
In this paper we provide an alternative, shorter proof, see 
Theorem~\ref{thm:Z_invariant_Lap}\footnote{A direct proof showing that our choice of weights satisfy the Yang-Baxter equations can be found on the 
first arXiv version of this paper.}, which does not require 
making the equations explicit.
We nevertheless write them down for three reasons: first, the first equation allows to explicitly compute the constant 
$\C$; second, they are not present in the physics literature and might be of interest to this community; third, it is quite remarkable that such rather complicated looking
equations have a one-parameter family of solutions.

Writing the Yang-Baxter equations amounts to making explicit the contributions $Z(\Gsstar|\Rs)$ and
$Z(\Gstriang|\Rs)$ in the four cases above.

$\bullet$ Case $\Rs^{\{x_1,x_2,x_3\}}$ illustrated in Figure~\ref{fig:Zinvariance}.
  If all the $x_\ell$'s are connected to $\rs$, then in $\Gstriang^\rs$, one can add
  (exactly) one edge to connect $x_0$ to $\rs$ through one of the three vertices
  $x_\ell$, (and have a weight $\rho(\theta_\ell)$), or directly connect $x_0$
  to $\rs$ through an edge with weight $m^2(x_0)$. On $\Gstriang^\rs$, there is
  nothing to do, so the total weight is 1. This yields the following identities
 \begin{align}
 Z(\Gsstar|\Rs^{\{x_1,x_2,x_3\}})&=\sum_{\ell=1}^3 \rho(\theta_\ell)+m^2(x_0),\label{Z_inv:equ_1}\\
 Z(\Gstriang|\Rs^{\{x_1,x_2,x_3\}})&=1. \label{Z_inv:equ_2}
 \end{align}

\begin{figure}[h]
\begin{center}
\resizebox{\textwidth}{!}{\input{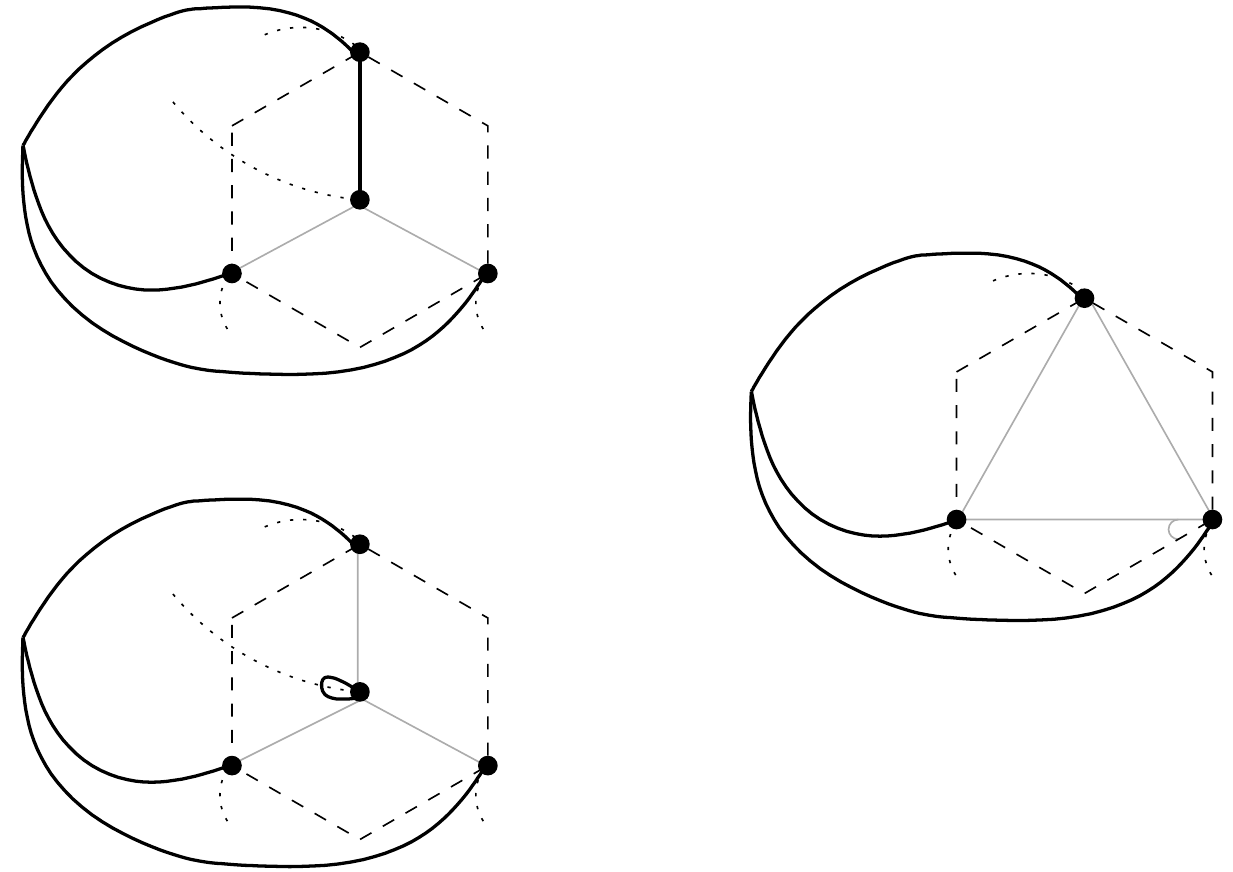_t}}
\end{center}
\caption{Left (with the analog containing either $\rho(\theta_1)$ or $\rho(\theta_2)$) and center: possible configurations for 
$Z(\Gsstar|\Rs^{\{x_1,x_2,x_3\}})$. Right: possible configuration for $Z(\Gstriang|\Rs^{\{x_1,x_2,x_3\}})$.}\label{fig:Zinvariance}
\end{figure}

Similar considerations for the other three cases lead to the following
expressions of $Z(\Gsstar|\Rs)$ and $Z(\Gstriang|\Rs)$. The expressions are
longer because the number of possible situations increases.

$\bullet$ Case $\Rs^{\{x_i,x_j\}}$:
 \begin{align*}
 Z(\Gsstar|\Rs^{\{x_i,x_j\}})=&
 \ \rho(\theta_k)\Bigl[\sum_{\ell\neq k} \rho(\theta_\ell)\Bigr]
 +m^2(x_0)\rho(\theta_k)
 + m^2(x_k)\Bigl[\sum_{\ell=1}^3 \rho(\theta_\ell)+m^2(x_0)\Bigr],\\
 Z(\Gstriang|\Rs^{\{x_i,x_j\}})=&
 \ \sum_{\ell\neq k}\rho(K-\theta_\ell)+m'^2(x_k).
 \end{align*}
$\bullet$ Case $\Rs^{\{x_i\}}$:
  \begin{align*}
 Z(\Gsstar|\Rs^{\{x_i\}}) =&
 \ \prod_{\ell=1}^3\rho(\theta_\ell) 
 +m^2(x_0)\prod_{\ell\neq i}\rho(\theta_\ell)
 +\sum_{\ell\neq i}m^2(x_\ell)\rho(\theta_{{\{i,\ell\}^c}}) 
    \Bigl[\sum_{\ell'\in\{i,\ell\}}\rho(\theta_{\ell'})\Bigr]
\\
&+m^2(x_0)[m^2(x_k)\rho(\theta_j)+m^2(x_j)\rho(\theta_k)]
+ \Bigl[\prod_{\ell\neq i}m^2(x_\ell)\Bigr]
    \Bigl[\sum_{\ell=1}^3\rho(\theta_\ell)+m^2(x_0)\Bigr],
 \\
 Z(\Gstriang|\Rs^{\{x_i\}})=&
 \ \sum_{\ell=1}^3 \prod_{\ell'\neq\ell}\rho(K-\theta_{\ell'})
+\sum_{\ell\neq i}m'^2(x_\ell)
    \Big[\sum_{\ell'\in\{i,\ell\}}\rho(K-\theta_{\ell'})\Bigr]
+\prod_{\ell\neq i}m'^2(x_\ell).
 \end{align*}
 Above, $\{i,\ell\}^c$ denotes the complementary set of $\{i,\ell\}$, \emph{i.e.}, the unique index $k$ which is not $i$ and $\ell$.
 
$\bullet$ Case $\Rs^{\emptyset}$:
 \begin{align*}
 Z(\Gsstar|\Rs^{\emptyset})=&
 \ \Bigl[\sum_{i=0}^3 m^2(x_i)\Bigr]\Bigl[\prod_{i=1}^3\rho(\theta_i)\Bigr]
 +m^2(x_0)\sum_{i=1}^3 m^2(x_i)\prod_{\ell\neq i}\rho(\theta_{\ell})\\
 &+ \sum_{i=1}^3 \Bigl[\prod_{\ell\neq i }m^2(x_{\ell})\Bigr]\rho(\theta_i)
     \Bigl[\sum_{\ell\neq i}\rho(\theta_\ell)\Bigr]
 +m^2(x_0)\sum_{i=1}^3
     \Bigl[\prod_{\ell\neq i}m^2(x_{\ell})\Bigr]\rho(\theta_{i})\\
 &
 +\Bigl[\prod_{i=1}^3 m^2(x_i)\Bigr]
     \Bigl[\sum_{i=1}^3\rho(\theta_i)+m^2(x_0)\Bigr],\\
 Z(\Gstriang|\Rs^\emptyset)=&\ \Bigl[\sum_{i=1}^3 m'^2(x_i)\Bigr]\Bigl[\sum_{i=1}^3\prod_{\ell\neq i}
 \rho(K-\theta_{\ell}) \Bigr]
 +\sum_{i=1}^3\Bigl[\prod_{\ell\neq i}m'^2(x_{\ell})\Bigr]
 \Bigl[\sum_{\ell\neq i}\rho(K-\theta_{\ell})\Bigr]\\&+\prod_{i=1}^3 m'^2(x_i).
 \end{align*}
 
\begin{rem}
  When $k=0$, the equations drastically simplify since all the masses are 0. They are the Yang-Baxter equations
  of the spanning tree model, and reduce to 
  the so-called Kennelly's
  theorem~\cite{Kennelly}, linking the conductances so
  that the electric networks $\Gsstar$ and $\Gstriang$ are equivalent.
  When eliminating $\rho(K-\theta_i)$ from the equations, one is left with a
  single equation:
  \begin{equation*}
    \rho(\theta_1)+\rho(\theta_2)+\rho(\theta_3)=
    \rho(\theta_1)\rho(\theta_2)\rho(\theta_3),
    \qquad \theta_1+\theta_2+\theta_3=\pi,
  \end{equation*}
  which, when parametrized by taking $\rho(\theta)=\tan(\theta)$ is the
  triple tangent identity. This expression for $\rho(\theta)$
  coincides with the critical conductances for trees on isoradial graphs
  introduced in~\cite{Kenyon3}.
\end{rem} 
 
\subsubsection{$Z$-invariance of the rooted spanning forests model}

The next theorem proves that, with the choice of conductances and masses of Equations \eqref{equ:def_conductances} 
and \eqref{eq:def_mass}, the rooted spanning forest model is $Z$-invariant.

\begin{thm}
\label{thm:Z_invariant_Lap}
Let $k\in[0,1)$. Suppose that conductances assigned to edges, and masses assigned to vertices are given by Equations \eqref{equ:def_conductances} 
and \eqref{eq:def_mass}. 
Then, the model of rooted spanning forests is $Z$-invariant with constant $\C(k)=k'\sc(\theta_1)\sc(\theta_2)\sc(\theta_3)$.

%
\end{thm}

\begin{rem}
We conjecture that the conductances and masses from
Equations~\eqref{equ:def_conductances} and~\eqref{eq:def_mass} provide a complete
parametrization of the Yang-Baxter equations of rooted spanning forests.
\end{rem}
\begin{rem}
In the actual state of knowledge, $Z$-invariance
does not provide a way of finding local expressions, but it gives a
framework for choosing the parameters of the model. In some cases (not including ours), there are some
elements in that direction though in the work by \cite{AdlerBobenkoSuris}
through the link between 3-dimensional consistency of some classes of equations
on isoradial graphs, 
and existence of solutions of these equations with a product structure.
\end{rem}

\begin{proof}
Let us first suppose that $Z$-invariance is proved and compute the constant $\C(k)$.
From Equations~\eqref{Z_inv:equ_1} and \eqref{Z_inv:equ_2}, we know that $\C(k)=\sum_{\ell=1}^3 \rho(\theta_\ell)+m^2(x_0)$. 
Using Equation~\eqref{equ:m0}, we deduce that $\C(k)=k'\prod_{\ell=1}^3 \sc(\theta_\ell)$. 

Using Remark~\ref{rem:equiv_proba}, proving $Z$-invariance is equivalent to proving invariance of the probability measure under
star-triangle transformations. Using the transfer-impedance theorem~\ref{thm:transimp_forest} ~\cite{BurtonPemantle}
it suffices to show that the Green functions $G^m_{\Gstriang}(x,y)$ and $G^m_{\Gsstar}(x,y)$ are equal on all common vertices 
(that is different than $x_0$). Let us fix $x$, then $G^m_{\Gstriang}(x,\cdot)$ is harmonic everywhere except at $x$. By 
Proposition~\ref{prop:3Dconsistency}, there is a unique way of extending $G^m_{\Gstriang}(x,\cdot)$ to $\Gsstar$ in such a way
that its massive Laplacian takes the same value at every vertices other than $x_0$, and is equal to 0 at $x_0$. This new function is 
massive harmonic everywhere except at $x$, it is thus equal to 
$G^m_{\Gsstar}(x,\cdot)$. By construction it is equal to $G^m_{\Gstriang}(x,\cdot)$ on all common vertices thus concluding the proof.
%
%
\end{proof}

\appendix

\section{Useful identities involving elliptic functions}
\label{app:elliptic}

In this section we list required identities satisfied by elliptic functions. We also derive properties and identities satisfied by the
functions $\Arm$ and $H$ defined in Section~\ref{sec:EllipticFunctions}.

\subsection{Identities for Jacobi elliptic functions}

\paragraph{Change of argument.} 
Jacobi elliptic functions satisfy various addition formulas by quarter-periods and half-periods, among which:
\begin{align}
&\sc(u-K\vert k)=-\frac{1}{k'}\sc(u\vert k)^{-1}, \label{id:scK} \text{ (\cite[2.2.17--2.2.18]{La89})}\\
&\dn(u+K\vert k)=k' \dn(u\vert k)^{-1}, \label{id:dnK} \text{ (\cite[2.2.19]{La89})}\\
&\sc(u+2iK'\vert k)=-\sc(u+2K\vert k)=-\sc(u\vert k), \label{id:sc2iKp} \text{ (\cite[2.2.11--2.2.12]{La89})}\\
&\sc(u+iK'\vert k)={i}{\dn(u\vert k)^{-1}}, \label{id:sciKp} \text{ (\cite[2.2.17--2.2.18]{La89})}\\
&\sn(u-iK'\vert k)=\frac{1}{k}\sn(u\vert k)^{-1}, \label{id:sniKp} \text{ (\cite[2.2.11--2.2.17]{La89}).}
\end{align}
\paragraph{Jacobi imaginary transformation.} These transformations, which are proved in \cite[2.6.12]{La89}, refer to the substitution of $u$ by $iu$ in the argument of Jacobi elliptic functions:
\begin{equation}
     \sn(iu\vert k)=i\sc(u\vert k'),\qquad \cs(iu\vert k)=-i\ns(u\vert k'),\qquad
\dn(iu\vert k)=\dc(u\vert k').\label{equ:sncomplex}
\end{equation}

\paragraph{Derivatives of Jacobi functions.} 
These derivatives are computed in \cite[16.16]{AS}:
\begin{equation}
\label{eq:derivatives_Jacobi_functions}
\sn'=\cn\dn,\qquad
\cn'=-\sn\dn,\qquad
\dn'=-k^2\sn \cn.
\end{equation}
\paragraph{Ascending Landen transformation.} This allows to express the ratio $\frac{\sn \cdot \cn }{\dn}$ as an $\sn$ function, with a different elliptic modulus:
\begin{equation}
\label{eq:alt}
     \frac{\sn \cdot \cn }{\dn}(u\vert k)=\frac{\sn ((1+\mu)u\vert \ell)}{1+\mu},\qquad \text{with\ } \ell = \frac{2-k^2-2\sqrt{1-k^2}}{k^2} \text{ and } \mu=\frac{1-\ell}{1+\ell}.
\end{equation}
It is stated in \cite[3.9.19]{La89}. Furthermore, the values of $K(k),K'(k)$ are related to those of $K(\ell),K'(\ell)$ as follows (this can be noticed indirectly, by comparing the periods of the above functions):
\begin{equation}
\label{eq:K_Kp_after_Landen}
     K(k)=(1+\ell)K(\ell),\qquad K'(k)=\frac{K'(\ell)}{1+\mu}.
\end{equation}

\subsection{Identities for the functions $\Arm$ and $H$}
\label{app:ellipticAH}

Recall the definition of the function $\Arm(\cdot\vert k)$, see Equation \eqref{def:Abis} of Section~\ref{sec:EllipticFunctions},
\begin{equation*}
     \Arm(u\vert k)=\frac{1}{k'}\left(\Dc(u\vert k)+\frac{E-K}{K}u\right),\quad \text{where }\Dc(u\vert k)=\int_0^{u} \dc^2(v\vert k)\,\ud v.
\end{equation*}

\begin{lem}
\label{cor:Armbis}
The function $\Arm(\cdot\vert k)$ is odd and satisfies the following identities:
\begin{align}
\bullet\ &\Arm(K-u\vert k)=-\Arm(u\vert k)+\frac{1}{k'}\ns(u\vert k)\dc(u\vert k),\label{cor:Armbis:item2}\\
\bullet\ &\Arm(v-u\vert k)=\Arm(v\vert k)-\Arm(u\vert k)-k'\sc(u\vert k)\sc(v\vert k)\sc(v-u\vert k),\label{cor:Armbis:item3}\\
\bullet\ &\Arm(u+2K\vert k)=\Arm(u\vert k),\label{cor:Armbis:item1}\\
\bullet\ &\Arm(u+2iK'\vert k)=\Arm(u\vert k)+\frac{i\pi}{k'K},\label{cor:Armbis:item1bis}\\
\bullet\ &\frac{\textnormal{d}\Arm}{\textnormal{d}u}(u\vert k)=\frac{\dc^2(u\vert k)}{k'}-\frac{K-E}{k'K}.\label{eq:deriv_Abis}
\end{align}
\end{lem}
\begin{proof} 
Consider \emph{Jacobi epsilon function} $\Erm(\cdot\vert k)$, see also \cite[3.4.25]{La89},
\begin{equation}
\label{eq:jacob_epsi}
     \forall\,u\in\CC,\quad\Erm(u\vert k)=\int_0^{u} \dn^2(v\vert k)\,\ud v.
\end{equation}
Performing the change of variable $u\rightarrow iu$ in $\Erm(iu\vert k')$ and using Jacobi imaginary transformation \eqref{equ:sncomplex}, 
the function $\Arm$ can be expressed as
\begin{equation}
\label{eq:new_formulation_Armbis}
     \Arm(u\vert k)=-\frac{i}{k'}\Erm(iu\vert k')+\frac{E-K}{k'K}u.
\end{equation}
The function $\Arm$ is odd because $\Erm$ is. Moreover, Jacobi epsilon function satisfies the following:
\begin{align}
\bullet\ &\Erm(-u+iK'\vert k)=-\Erm(u\vert k)+i(K'-E')-\cs(u\vert k)\dn(u\vert k),\label{equ:Erm2}\ \text{ (\cite[3.6.17]{La89})} \\
\bullet\ &\Erm(v-u\vert k)=\Erm(v\vert k)-\Erm(u\vert k)+k^2\sn(u\vert k)\sn(v\vert k)\sn(v-u\vert k),\label{equ:Erm3}\ \text {(\cite[3.5.14]{La89})}\\
\bullet\ &\Erm(2K\vert k)=2E,\quad \Erm(2iK'\vert k)=2i(K'-E'),
\label{equ:Erm10}\ \text{ (\cite[3.6.22]{La89})}.
\end{align}
We first prove \eqref{cor:Armbis:item2}.
From Identities \eqref{eq:new_formulation_Armbis} and \eqref{equ:Erm2}, we have
\begin{align*}
 \Arm(-u+K\vert k)+\Arm(u\vert k)&=-\frac{i}{k'}(\Erm(-iu+iK\vert k')+\Erm(iu\vert k'))+\frac{E-K}{k'}\\&=-\frac{i}{k'}
(i(K-E)-\cs(iu\vert k')\dn(iu\vert k'))+\frac{E-K}{k'}\\
&=\frac{i}{k'}\cs(iu\vert k')\dn(iu\vert k').
\end{align*}
The proof is concluded using Jacobi imaginary transformation~\eqref{equ:sncomplex}.
We turn to the proof of \eqref{cor:Armbis:item3}. 
Using the definition of $\Arm(v-u\vert k)$ and Identity~\eqref{equ:Erm3} evaluated at $k'$, we have:
\begin{equation*}
\Arm(v-u\vert k)=\Arm(v\vert k)-\Arm(u\vert k)-ik'\sn(iu\vert k')\sn(iv\vert k')\sn(i(v-u)\vert k').
\end{equation*}
The proof is again concluded using Jacobi imaginary transformation~\eqref{equ:sncomplex}. 
We now move to the proof of~\eqref{cor:Armbis:item1} and~\eqref{cor:Armbis:item1bis}. 
From~\eqref{equ:Erm10} used with $k'$ instead of $k$, we have 
\begin{align*}
\Arm(2K\vert k)&=-\frac{i}{k'}\Erm(2iK\vert k')+2\frac{E-K}{k'}=-\frac{i}{k'}2i(K-E)+2\frac{E-K}{k'}=0,\\
\Arm(2iK'\vert k)&=
-\frac{i}{k'}(\Erm(-2K'\vert k'))+\frac{2iK'(E-K)}{k'K}=-\frac{i}{k'}(-2E')+\frac{2iK'(E-K)}{k'K}
=\frac{i\pi}{k'K},
\end{align*}
where the last equality is a consequence of Legendre's identity \eqref{eq:Legendre}.
The proof of~\eqref{cor:Armbis:item1} (resp.\ \eqref{cor:Armbis:item1bis}) is concluded using \eqref{cor:Armbis:item3} 
evaluated at $-u$ and $2K$ (resp.\ $-u$ and $2iK'$).

Finally, Equation \eqref{eq:deriv_Abis} readily follows from \eqref{eq:new_formulation_Armbis}.
\end{proof}


Recall the definition of the function $H(u\vert k)=  \frac{-ik'K'}{\pi}\Arm(\frac{iu}{2}\vert k')$, see \eqref{def:H}.

\begin{lem}
\label{lem:h}
The function $H(\cdot\vert k)$ satisfies the following properties:
\begin{align*}
\bullet\ &H(u+4K\vert k)=H(u\vert k)+1, \\
\bullet\ &H(u+4iK'\vert k)=H(u\vert k),\\
\bullet\ &\lim_{k\to 0}H(u\vert k)=\frac{u}{2\pi},\\
\bullet\ &\text{$H$ has a simple pole in the rectangle $[0,4K]+[0,4iK']$, at $2iK'$,
       with residue $\frac{2K'}{\pi}$.}
\end{align*}
\end{lem}

\begin{proof}
The first two properties of Lemma \ref{lem:h} immediately follow from using \eqref{cor:Armbis:item1} and \eqref{cor:Armbis:item1bis} in \eqref{def:H}. 
The limit of $H$ as $k\to0$ is computed using the alternative expression \eqref{eq:new_formulation_Armbis}:
\begin{equation*}
     H(u\vert k)=\frac{-ik'K'}{\pi}\Arm\Bigl(\frac{iu}{2}\Big\vert k'\Bigr)=\frac{K'}{\pi}\left(\Erm\Bigl(\frac{u}{2}\Big\vert k\Bigr)+
     \frac{E'-K'}{K'}\frac{u}{2}\right).
\end{equation*}
We conclude using the fact that $\lim_{k\to 0}\Erm(u\vert k)=u$, see \eqref{eq:jacob_epsi} and Section \ref{sec:EllipticFunctions}, 
together with $\lim_{k\to 0}E'(k)=1$.

Finally, $\dc$ has a simple pole at $K$ with residue $1$, hence by \eqref{eq:jacob_Dc} the same holds true for $\Dc$. 
Accordingly $\Dc(\frac{iu}{2}\vert k')$ has a pole of order $1$ at $-2iK'$ with residue $-2i$. Using the expression \eqref{def:Abis} as well 
as the fact that $H$ is odd completes the proof.
\end{proof}

\section{Explicit computations of the Green function}
\label{app:comput_green}

In this section, we explicitly compute values of the Green function along the diagonal and for incident vertices.
We use the explicit formula \eqref{eq:greenbis} of Theorem~\ref{thm:expression_Green} and the residue theorem. 
The second formula for incident vertices uses the symmetry of the Green function.
Note that it is not immediate that the two formulas 
\ref{pointb} and~\ref{pointc} are indeed equal.
The first is more useful in the proof of Theorem~\ref{thm:expression_Green}, 
the third is more attractive since it only involves the half-angle
$\theta$.

%

\begin{lem}\label{lem:Gneighbor}$\,$
\begin{enumerate}
\item\label{point1} Let $x$ be a vertex of $\Gs$. Then, the Green function on the diagonal at $x$ is equal to:
\begin{equation}
  \label{eq:green_diag}
  G^m(x,x) =   \frac{k' K'}{\pi}.
\end{equation}
\item\label{point2} 
Let $x$ and $y$ be neighboring vertices in $G$, endpoints of an edge $e$ in a
rhombus spanned by $e^{i\overline{\alpha}}$, and $e^{i\overline{\beta}}$, of half-angle
$\theta = \frac{\beta-\alpha}{2}\in (0,K)$ with
$y=x+e^{i\overline{\alpha}}+e^{i\overline{\beta}}$, then we have the following
expressions for 
the Green function evaluated at $(x,y)$:
\begin{enumerate}[label={\rm (\alph{*})},ref={\rm (\alph{*})}]
\item\label{pointa} $\displaystyle G^m(x,y)
= \frac{H(\alpha+2K)-H(\beta+2K)}{\sc(\theta)}+\frac{k' K'}{\pi}\expo_{(x,y)}(2iK')$,
\item\label{pointb}$\displaystyle G^m(x,y)=
\frac{H(\alpha)-H(\beta)}{\sc(\theta)}+
\frac{K'}{\pi}\dn\big(\frac{\alpha}{2}\bigr)\dn\bigl(\frac{\beta}{2}\bigr)$,
\item\label{pointc} $\displaystyle G^m(x,y)=
-\frac{H(2\theta)}{\sc(\theta)}+
\frac{K'}{\pi}\dn(\theta)$.
\end{enumerate}
\end{enumerate}
\end{lem}

\begin{proof}[Proof of Point~\ref{point1}.]
 Using expression \eqref{eq:green}, we have
 \begin{equation*}
   G^m(x,x) = \frac{k'}{4i\pi}\oint_{\Cs} 1\ud u,
 \end{equation*}
 where $\Cs$ is any contour winding once vertically on $\TT(k)$ (the
 contour can be anywhere, since the integrand has no pole).
 Take for $\Cs$ a vertical segment and parametrize it by the ordinate
 $w=\text{Im}(u)$. Using that the length of $\Cs$ is $4K'$ and that
 $\ud u=i\ud  w$, one readily gets
 \begin{equation*}
   G^m(x,x) = \frac{k'}{4i\pi} (i4K') =
   \frac{k' K'}{\pi}.\qedhere
 \end{equation*}
\end{proof}

\begin{proof}[Proof of Point~\ref{point2}.]
  We first prove~\ref{pointa}. Using expression \eqref{eq:greenbis} and
replacing the exponential function by its definition, we need to compute
\begin{equation*}
  G^m(x,y)= - \frac{k'^2}{4i\pi} \oint_{\gamma_{x,y}} H(u)\,\sc\Bigl(\frac{u-\alpha}{2}\Bigr)\,\sc\Bigl(\frac{u-\beta}{2}\Bigr)\ud u.
\end{equation*}
where $\gamma_{x,y}$ is a trivial contour containing the pole $2iK'$ of $H$ and the poles $\alpha+2K$, $\beta+2K$ of the 
exponential function. The residue of $\sc$ at $K$ is $-1/k'$, see \eqref{id:scK}
or \cite[Table~16.7]{AS}, from which we deduce that:
\begin{equation*}
  \res_{u=\alpha+K}\Bigl[\sc\Bigl(\frac{u-\alpha}{2}\Bigr)\Bigr]=-\frac{2}{k'}.
\end{equation*}
By the residue theorem, we thus have:
\begin{align*}
  G^m(x,y) &= - \frac{k'^2}{2}
  \Bigl(
   \underbrace{\ -\frac{2}{k'}[H(\alpha+2K)\sc(K-\theta)+H(\beta+2K)\sc(K+\theta)]}_{\text{residues at $\alpha+2K$ and $\beta+2K$}}+
  \underbrace{\ -\frac{2K'}{k'\pi}\expo_{x,y}(2iK')}_{\text{residue at $2iK'$}}.
  \Bigr)\\
  &\stackrel{\eqref{id:scK}}{=}
  \frac{H(\beta+2K)-H(\alpha+2K)}{\sc(\theta)}+\frac{k' K'}{\pi}\expo_{x,y}(2iK'),
\end{align*}
which concludes the proof of~\ref{pointa}.
Note that by Identity~\eqref{id:sciKp},
$\expo_{(x,y)}(2iK')=
\frac{k'}{\dn(\frac{\alpha}{2})\dn(\frac{\beta}{2})}$.

Expression~\ref{pointb} is obtained by symmetry of $G^m$, by exchanging the
role of $x$ and $y$, transforming $\alpha$ and $\beta$ into $\alpha+2K$ and
$\beta+2K$, respectively. Using Identity~\eqref{id:dnK}, one gets
\begin{equation*}
  \expo_{(y,x)}(2iK')=
\frac{k'}{\dn(\frac{\alpha+2K}{2})\dn(\frac{\beta+2K}{2})}=
\frac{\dn(\frac{\alpha}{2}) \dn(\frac{\beta}{2})}{k'}.
\end{equation*}

To obtain~\ref{pointc} we again use 
Equation~\eqref{eq:greenbis} but instead of the function $H$, we use the function $\widetilde{H}$:
\begin{equation*}
  \widetilde{H}(u)=H(u-\alpha).
\end{equation*}
Indeed, since $\widetilde{H}-H$ is an elliptic function, it satisfies the conditions required for Equation~\eqref{eq:greenbis} to
hold, see Remark~\ref{rem:critical_Green}.
After a change of variable $v=u-\alpha$ in the integral, we get
\begin{equation*}
  G^m(x,y)=-\frac{k'^2}{4i\pi}\oint_\gamma
  H(v)\,\sc\Bigl(\frac{v}{2}\Bigr)\,\sc\Bigl(\frac{v-2\theta}{2}\Bigr)\ud v,
\end{equation*}
which would be the integral expression giving~\ref{pointb} when $\alpha=0$, and
$\beta=2\theta$. Expression~\ref{pointc} is then obtained using the fact that
$H(0)=0$ and $\dn(0)=1$.
\end{proof}

\section{Identities for weights of the star-triangle transformation}
\label{app:stt}

In this section we prove identities for weights involved in the star-triangle transformation, used in 
Sections~\ref{sec:integrability} and~\ref{sec:Zinvariance}. We refer to Figure~\ref{fig:star_triangle} for notation. 

\begin{lem}
\label{lem:rewritingm}
We have the following identities for weights involved in the star-triangle transformation:
\begin{align}
k'\prod_{\ell=1}^3 \rho(\theta_\ell)&=m^2(x_0)+\sum_{\ell=1}^3 \rho(\theta_\ell),\label{equ:m0}\\
{m'}^2(x_k)-m^2(x_k)&=\rho(\theta_k)-\sum_{\ell\neq k}\rho(K-\theta_\ell)-k'\rho(K-\theta_i)\rho(K-\theta_j)\rho(\theta_k).\label{equ:m1_bis}
\end{align}
\end{lem}

\begin{proof}
Equation \eqref{equ:m0} 
is a consequence of \eqref{cor:Armbis:item3} and \eqref{cor:Armbis:item1} using that $\theta_1+\theta_2+\theta_3=2K$.
We now prove Equation~\eqref{equ:m1_bis}. Note that the star-triangle transformation implies that ${m'}^2(x_k)-m^2(x_k)$ only depends on 
three angles, that we call $\theta_i,\theta_j,\theta_k$. We have:
\begin{align*}
{m'}^2(x_k)&-m^2(x_k)=\\
&=\Arm(K-\theta_i)+
\Arm(K-\theta_j)-\sc(K-\theta_i)-\sc(K-\theta_j)-\Arm(\theta_k)+\sc(\theta_k)\\
&=\Arm(K-\theta_i)+
\Arm(K-\theta_j)-\Arm(2K-(\theta_i+\theta_j))-\sc(K-\theta_i)-\sc(K-\theta_j)+\sc(\theta_k)\\
&=-
k'\sc(K-\theta_i)\sc(K-\theta_j)\sc(2K-(\theta_i+\theta_j))-\sc(K-\theta_i)-\sc(K-\theta_j)+\sc(\theta_k),\\
&\quad\quad\text{by Point~\eqref{cor:Armbis:item3} of Lemma~\ref{cor:Armbis}}\\
&=-k'\sc(K-\theta_i)\sc(K-\theta_j)\sc(\theta_k)-\sc(K-\theta_i)-\sc(K-\theta_j)+\sc(\theta_k),
\end{align*}
thus ending the proof.
\end{proof}

\section{Random walks and rooted spanning forests}\label{app:stat_mec}

\label{app:statmech_finite}

In this Appendix, we collect some facts about rooted spanning forests, killed random walks and their link to 
network random walks and spanning trees, that are useful for Section \ref{sec:statmech}.

Suppose for the moment that $\Gs$ is a finite connected
(not necessarily isoradial) graph, with a massive
Laplacian $\Delta^m$. Equivalently, by Equation~\eqref{equ:operator_general},
$\Gs$ is endowed with positive conductances $(\rho(e))_{e\in\Es}$
and positive masses $(m^2(x))_{x\in\Vs}$.
Consider the graph $\Gs^\rs=(\Vs^\rs,\Es^\rs)$ obtained from $\Gs$
by adding a root vertex $\rs$ and joining every vertex of $\Gs$ to $\rs$, as in Section \ref{subsec:rsf_ust}. The graph $\Gs^\rs$ is weighted by the function
$\rho^m$, see~\eqref{equ:defrhom}.

%
%

\subsection{Massive harmonicity on $\Gs$ and harmonicity on $\Gs^\rs$}

\label{subsec:massive_harmo}

There is a natural (non-massive) Laplacian $\Delta_\rs$ on $\Gs^\rs$,
acting on functions $f$ defined on vertices of $\Gs^\rs$: 
\begin{equation*}
  \forall\ x\in\Vs^\rs,\quad \Delta_\rs f(x) = 
  \sum_{xy\in\Es^\rs} \rho^m(xy)[f(x)-f(y)].
\end{equation*}
Then the restriction $\Delta^{(\rs)}_{\rs}$ of the matrix of $\Delta_\rs$
to vertices of $\Gs$, obtained by removing the row and column corresponding to
$\rs$, is exactly the matrix $\Delta^m$.

Functions on vertices of $\Gs$ are in bijection with functions on vertices of $\Gs^\rs$ taking
value $0$ on $\rs$ (by extension/restriction). This bijection is compatible with
the Laplacians on $\Gs$ and $\Gs^\rs$: if $f$ is a function on $\Gs$ and
$\widetilde{f}$ is its extension to $\Gs^\rs$ such that $\widetilde{f}(\rs)=0$, then  
$\Delta^m f = \Delta_\rs \widetilde{f}$ on
$\Gs$.

The operator $\Delta^m$ is invertible, and its inverse is
$G^m$, the massive Green function of $\Gs$.
The matrix $\Delta_\rs$ is not invertible: its kernel is exactly the space of
constant functions on $\Gs^\rs$, but its restriction to functions on
$\Gs^{\rs}$ vanishing at $\rs$ is invertible, and its negated inverse is exactly
$\widetilde{G}^m$, the extension of $G^m$ to $\Gs^\rs$, taking the value 0 at $\rs$:
\begin{equation*}
  \forall\ x,y\in\Vs^{\rs},\quad \widetilde{G}^m(x,y)=\widetilde{G}^{m}(y,x)=
  \begin{cases}
    G^m(x,y) & \text{if $x$ and $y$ are vertices of $\Gs$,} \\
    0 & \text{if $x$ or $y$ is equal to $\rs$.}
  \end{cases}
\end{equation*}

\subsection{Random walks}
\label{subsec:rw_app}

The \emph{network random walk} $(Y_j)_{j\geq 0}$ on $\Gsr$ with initial state $x_0$ is defined by $Y_0=x_0$ and
jumps
\begin{equation*}
\forall\,x,y\in \Vs^\rs,\quad
P_{x,y}=\PP_{x_0}[Y_{j+1}=y|Y_j=x]=
\begin{cases}
\displaystyle\frac{\rho^m(xy)}{\rho^m(x)}&\text{ if $y\sim x$,}\\
0&\text{ otherwise},
\end{cases}
\end{equation*}
where
\begin{equation}
\label{eq:def_rho^m}
  \rho^m(x)=
  \sum_{y\in\Vs^\rs:y\sim x}\rho^m(xy)=
  \begin{cases}
    \sum\limits_{y\in\Vs:y\sim x}\rho(xy)+m^2(x)& \text{if $x\neq \rs$,} \\
    \sum\limits_{y\in\Vs} m^2(y) & \text{if $x=\rs$.}
  \end{cases}
\end{equation}
The Markov matrix $P=(P_{x,y})$ is related to the Laplacian $\Delta_\rs$ as follows: if
$A_{\rs}$ denotes the diagonal matrix whose entries are the diagonal entries of the Laplacian $\Delta_\rs$, then
\begin{equation}
\label{eq:link_matrix_P_Laplacian}
     P = I+ (A_\rs)^{-1}\Delta_\rs.
\end{equation}
This random walk is positive recurrent. The \emph{potential} $V_\rs(x,y)$ of this
random walk is defined as the
difference in expectation of the number of visits at $y$ starting from $x$ and
from $y$:
\begin{equation*}
  V_{\rs}(x,y)=
  \mathbb{E}_x\Biggl[\sum_{j=0}^{\infty}\mathbb{I}_{\{Y_j=y\}}\Biggr] -
  \mathbb{E}_y\Biggl[\sum_{j=0}^{\infty}\mathbb{I}_{\{Y_j=y\}}\Biggr]. 
\end{equation*}
Although both sums separately are infinite, the difference makes sense and is
finite, as can be seen by computing $V_{\rs}(x,y)$ with
a coupling of the random walks starting from $x$ and $y$, where they evolve
independently until they meet (in finite time a.s.), and stay
together afterward. 

Because $(Y_j)$ is (positive) recurrent, the time $\tau_\rs$ for $(Y_j)$ to
hit $\rs$ is finite a.s.
We can define the killed random walk
$(X_j)=(Y_{j\wedge(\tau_r-1)})$, absorbed at the
root $\rs$. The process $(X_j)$ visits only a finite number of vertices of
$\Gs$ before being absorbed: every vertex is thus transient. If $x$ and
$y$ are two vertices of $\Gs$, then we can define the potential of $(X_j)$,
$V^{m}(x,y)$, as the expected number of visits at $y$ of $(X_j)$
starting from $x$, before it gets absorbed. $V^m$ and $V_\rs$ are linked by the
formula below, which directly follows from the strong Markov property:
\begin{equation}
  \label{eq:massive_potential}
  \forall\ x,y\in\Gs,\quad
  V^{m}(x,y) = V_{\rs}(x,y)-V_{\rs}(\rs,y).
\end{equation}

As a matrix, $V^m$ is equal to $(I-Q^m)^{-1}$ where $Q^m$ is the substochastic
transition matrix for the killed process $(X_j)$. Given that
$Q^m=I-(A^m)^{-1}\cdot\Delta^m$, where $A^m$ is the diagonal
matrix extracted from $\Delta^m$, $V^m$ is related to the Green function by the
following formula:
\begin{equation}
\label{eq:potential_vs_green}
  V^m(x,y) = \frac{1}{A^m_{x,x}}(\Delta^m)^{-1}_{x,y} =
  \frac{G^m(x,y)}{\rho^m(x)}.
\end{equation}

Another quantity related to the potential is the \emph{transfer impedance
matrix} $\Hs$, whose rows and columns are indexed by oriented edges of the graph.
If $e=(x,y)$ and $e'=(x',y')$ are two directed edges of $\Gs^\rs$, 
the coefficient $\Hs(e,e')$ is the expected number of times that this
random walk $(Y_j)$, started at $x$ and stopped the first time it hits $x$,
crosses the edge $(x',y')$ minus the expected number of times that
it crosses the edge $(y',x')$:
\begin{equation*}
  \Hs(e,e')= 
  [V_{\rs}(x,x')-V_{\rs}(y,x')] P_{x',y'} -
  [V_{\rs}(x,y')-V_{\rs}(y,y')] P_{y',x'}.
\end{equation*}
The quantity $\Hs(e,e')/\rho(e')$ is symmetric in $e$ and $e'$, and is changed to
its opposite if the orientation of one edge is reversed.

When $e$ and $e'$ are in fact edges of $\Gs$,
by~\eqref{eq:massive_potential} and the definition of the
transition probabilities for the processes $(Y_j)$ and $(X_j)$, 
$V_{\rs}(x,x')-V_{\rs}(y,x')= V^{m}(x,x')-V^{m}(y,x')$ and
$P_{x',y'}=Q_{x',y'}=\rho(x'y')/\rho^m(x)$
(and similarly when exchanging the roles of $x'$ and $y'$). Therefore,
\begin{align}
  \label{eq:tranfer_impedance}
  \Hs(e,e')&= 
  [V^m(x,x')-V^m(y,x')] Q_{x',y'} -
  [V^m(x,y')-V^m(y,y')] Q_{y',x'} \nonumber \\
  &=\rho(x'y')[G^m(x,x')-G^m(y,x')-G^m(x,y')-G^m(y,y')].
\end{align}
If one of the vertices of $e$ or $e'$ is $\rs$, then the same formula holds if
we replace $G^m$ by $\widetilde{G}^{m}$, \emph{i.e.}, if we put to $0$ all the terms
involving the root $\rs$.

\subsection{Spanning forests on $\Gs$ and spanning trees on $\Gs^{\rs}$}\label{sec:app_derivation_classical}

Recall the definition of rooted spanning forests on $\Gs$ and spanning trees of
$\Gs^{\rs}$ from Section~\ref{subsec:rsf_ust}.
Kirchhoff's matrix-tree theorem~\cite{Kirchhoff} states that
spanning trees of $\Gsr$ are counted by the
determinant of $\Delta_{\rs}^{(\rs)}$, obtained from
$\Delta_{\rs}$ by deleting the row and column corresponding to 
 $\rs$:
\begin{thm}[\cite{Kirchhoff}]\label{thm:matrix_tree}
The spanning forest partition function of the graph $\Gs$ is equal to:
\begin{equation*}
     \Zforest(\Gs,\rho,m)=\det \Delta_{\rs}^{(\rs)}.
\end{equation*}
\end{thm}

Using the fact stated in Section~\ref{subsec:massive_harmo} that
$\Delta_r^{(r)}=\Delta^m$, we exactly obtain
Theorem~\ref{thm:matrix_forest}.

%
The explicit expression for the Boltzmann measure of spanning trees is due to
Burton and Pemantle~\cite{BurtonPemantle}.
Fix an arbitrary orientation of the edges of $\Gsr$.

\begin{thm}[\cite{BurtonPemantle}]
For any distinct edges $e_1,\dotsc,e_k$ of $\Gsr$, the probability that these
edges belong to a spanning tree of $\Gsr$ is:
$$
\PPtree(e_1,\dotsc,e_k)=\det (\Hs(e_i,e_j))_{1\leq i,j\leq k}.
$$
\end{thm}

Using the correspondence between edges (connected to $\rs$, or not)  in the spanning tree
of $\Gs^\rs$ and edges and roots for the corresponding rooted spanning forest of
$\Gs$, together with the expression of the transfer impedance matrix $\Hs$ in
terms of the massive Green function on $\Gs$ from
Equation~\eqref{eq:tranfer_impedance}, one exactly gets the statement of
Theorem~\ref{thm:transimp_forest}.

Due to the bijection between spanning trees on $\Gs^\rs$
and rooted spanning forests on $\Gs$, the latter can be generated by Wilson's
algorithm~\cite{Wilson} from the killed random walk $(X_j)$. Indeed, if we take $\rs$ as starting point of the spanning tree, 
and construct its branches by loop erasing the random walk $(Y_j)$, the obtained
trajectories are exactly loop erasures of $(X_j)$.

\subsection{Killed random walk on infinite graphs and convergence of the Green functions along exhaustions}
\label{sec:rw}

In this section  we define the killed random walk on an infinite graph $\Gs$, as well as its 
associated potential and Green function. We then prove  (Lemma 
\ref{lem:convergence_Green}) that the Green functions associated to an exhaustion 
$(\Gs_n)_{n\geq 1}$ of $\Gs$ converge pointwise to the Green function of $\Gs$. 
Lemma~\ref{lem:convergence_Green} is an important preliminary result to Theorem 
\ref{thm:infinite_vol_meas}.

In the case where $\Gs$ is infinite, it is not possible to consider the network
random walk $(Y_j)$ on $\Gs^\rs$, the graph obtained from $\Gs$ by adding the root $\rs$
connected to the other vertices, because the degree of $\rs$ is infinite and the
conductances associated to edges connected to $\rs$ are bounded from below by a
positive quantity, and are thus not summable. However, it is
possible to directly 
define the walk $(X_j)$, killed when it reaches
$\rs$. Its transition
probabilities are:
\begin{equation}
\label{eq:P_Green}
Q^m_{x,y}=\PP(X_{j+1}=y|X_j=x)=
 \left\{\begin{array}{cl}
  \displaystyle\frac{\rho(xy)}{\sum_{z\sim x} \rho(xz) + m^2(x)} &\text{if $y$
  and $x$ are neighbors,} \\ 
   0 & \text{otherwise,}
  \end{array}\right.
\end{equation}
and the probability of being absorbed at $x$ is
$\overline{Q^m_x}=\PP(X_{j+1}=\rs|X_j=x)=1-\sum_{xy\in\Es}Q^m_{x,y}$.

Under the condition that the conductances and masses are uniformly bounded away
from 0 and
infinity (which is the case on isoradial graphs, as soon as $k>0$ and the angles
of the rhombi are bounded away from 0 and $\frac{\pi}{2}$), the probability of
being absorbed at any given site
is bounded from below by some uniform positive quantity. The process $(X_j)$ is
thus absorbed in finite time, and vertices of $\Gs$ are transient. We will
assume that this condition is fulfilled.

There is the same link~\eqref{eq:link_matrix_P_Laplacian} as in Section~\ref{subsec:rw_app} 
between the substochastic matrix $Q^m=(Q^m_{x,y})$ and the Laplacian $\Delta^m$.

The {potential} $V^{m}$ of the discrete random walk $(X_j)$ is a
function on $\Gs\times\Gs$ defined at $(x,y)$ as the expected time spent at
vertex $y$ by the discrete random walk $(X_j)$ started at $x$ before being
absorbed (below, $\tau_\rs$ is defined as the first hitting time of $\rs$, as in Section \ref{subsec:rw_app}):
\begin{equation}
\label{eq:def_pot_krw}
  V^{m}(x,y)=\mathbb{E}_x\Biggl[\sum_{j=0}^{ \tau_\rs-1}\mathbb{I}_{\{y\}}(X_{j})\Biggr].
\end{equation}
In Section
\ref{subsec:cont} we give the standard interpretation of the Green function in terms of continuous time 
random processes.

We now come to the convergence of the Green functions along an exhaustion of the graph.
Let $(\Gs_n)_{n\geq 1}$ be an exhaustion of the infinite graph $\Gs$. Let $(Y^n_j)$ be the 
network random walk of $\Gs_n$ and $(X_j)$ be the killed random walk of $\Gs$. We introduce 
$\tau_\rs^n=\inf\{j>0:Y_j^n=\rs\}$ and $(X^n_j)=(Y^n_{j\wedge (\tau_\rs-1)})$,
the random walk on $\Gs_n$, killed 
at the vertex $\rs$. It is absorbed in finite time by $\rs$. Finally, 
$\tau_{\partial \Gs_n}=\inf\{j>0:X_j^n\notin \Gs_n\}=\inf\{j>0:X_j\notin \Gs_n\}$ 
(if the starting point belongs to $\Gs_n$) is the first exit time from the domain $\Gs_n$. 
\begin{lem}
  \label{lem:convergence_Green}
For any $x,y\in\Vs$, one has $\lim_{n\to\infty} G_n^m(x,y)=G^m(x,y)$.
\end{lem}

\begin{proof}
To use an interpretation with random walks,
we prove Lemma \ref{lem:convergence_Green} for the potential instead of the Green function; this is equivalent  
by~\eqref{eq:potential_vs_green}.
The potential function for the killed walk $(X^n_{j})$ is
\begin{equation*}
     V_n^m(x,y)=\mathbb{E}_x\Biggl[\sum_{j=0}^{\infty}\mathbb{I}_{\{y\}}(X^n_{j})\Biggr]
                       =\mathbb{E}_x\Biggl[\sum_{j=0}^{ \tau_\rs^n-1}\mathbb{I}_{\{y\}}(Y^n_{j})\Biggr].
\end{equation*}
The potential $V^m(x,y)$ for $(X_{j})$ is the same as 
above without the subscript $n$, see \eqref{eq:def_pot_krw}.
One has
\begin{equation*}
     V_n^m(x,y)=\mathbb{E}_x\Biggl[\sum_{j=0}^{ \tau_\rs^n-1}\mathbb{I}_{\{y\}}(X^n_{j});\tau_\rs^n<\tau_{\partial \Gs_n}\Biggr]
                        +\mathbb{E}_x\Biggl[\sum_{j=0}^{ \tau_\rs^n-1}\mathbb{I}_{\{y\}}(X^n_{j});\tau_\rs^n>\tau_{\partial \Gs_n}\Biggr].
\end{equation*}
In the first term we replace $X^n_{j}$ by $X_{j}$ (as $x\in\Gs_n$), $\tau_\rs^n$ by
$\tau_\rs$, and we use the monotone convergence theorem (as $n\to \infty$,
$\tau_{\partial \Gs_n}\to\infty$ monotonously). The first term goes to
$V^m(x,y)$. We now prove that the second term goes to $0$ as $n\to\infty$. It is
 less than $\mathbb{E}_x[\tau_\rs^n;\tau_\rs^n>\tau_{\partial \Gs_n}]$.
Conductances and masses are bounded away from 0 and
$\infty$, so $\tau_\rs^n$ is integrable and dominated by a geometric random
variable not depending on $n$.
We conclude since $\tau_{\partial \Gs_n}\to\infty$.
\end{proof}

\subsection{Laplacian operators and continuous time random processes}
\label{subsec:cont}

In this section we briefly recall the probabilistic interpretation of the Laplacian $\Delta^m$ on 
the infinite graph $\Gs$
(introduced in \eqref{equ:operator_general} of Section \ref{sec:defLap}). A similar interpretation
holds for Laplacian operators on other graphs (like on the finite graphs of Section \ref{subsec:rw_app}).

The Laplacian $\Delta^m$
is the generator of a continuous time Markov process $(X_t)$ on $\Gs$, augmented with an absorbing 
state (the \emph{root} $\rs$): when at $x$ at time $t$, the process waits an exponential time (with
parameter equal to the diagonal coefficient $d_x$), and then jumps to a neighbor of $x$ 
with probability \eqref{eq:P_Green}. For the same reasons as for $(X_j)$ and under the same hypotheses, 
the random process $(X_t)$ will be absorbed by the vertex $\rs$ in finite time. 

The matrix $Q^m$ in \eqref{eq:P_Green} is a substochastic matrix, corresponding
to the discrete time counterpart $(X_j)$ of $(X_t)$, just tracking the jumps.
The Green function 
$G^m(x,y)$ represents the total time spent at $y$ by the process $(X_t)$ started at $x$ at time $t=0$ before being absorbed.

\small
\bibliographystyle{alpha}
\bibliography{survey}

\end{document}